\title{A gluing formula for Reidemeister-Turaev torsion}
\author{Stefano Borghini}
\date{}
\newtheorem*{teo*}{Theorem}
\newtheorem{teo}{Theorem}[section]
\newtheorem{prp}[teo]{Proposition}
\theoremstyle{definition}
\newtheorem*{ntz}{Notation}
\newtheorem{lmm}[teo]{Lemma}
\newtheorem*{ack}{Acknowledgements}
\theoremstyle{remark}
\newtheorem{nta}[teo]{Remark}
  \renewcommand{\figurename}{Fig.}
\begin{document}

\maketitle

\begin{abstract}
We extend Turaev's theory of Euler structures and torsion invariants on a 3-manifold $M$ to the case of vector fields having generic behavior on $\partial M$. 
This allows to easily define gluings of Euler structures and to develop a completely general gluing formula for Reidemeister torsion of 3-manifolds.
Lastly, we describe a combinatorial presentation of Euler structures via stream-spines, as a tool to effectively compute torsion.
\end{abstract}

\section*{Introduction}
Reidemeister torsion is a classical topological invariant introduced by Reidemeister (\cite{reidemeister}) in order to classify lens spaces.  
Significant improvements in the study of this invariant have been made by Milnor (\cite{milnor}), who discovered connections between torsion and Alexander polynomial, and Turaev (\cite {turaev}), who showed that the ambiguity in the definition of Reidemeister torsion could be fixed by means of Euler structures (i.e., equivalence classes of non-singular vector fields).  Actually, to completely fix the ambiguity, Turaev introduced the additional notion of homology orientation, but we will not consider it (see Remark~\ref{homology orientation}).

Recently, Reidemeister torsion has proven its utility in a number of topics in 3-dimensional topology.
For instance, Reidemeister torsion is the main tool in the definition of the Casson-Walker-Lescop invariants (\cite{lescop}) and of Turaev's maximal abelian torsion (\cite{turaev1}), which in turn has been proved to be equivalent (up to sign) to the Seiberg-Witten invariants on 3-manifolds (if the first Betti number is $\neq 0$).

The aim of this paper is to describe the behavior of Reidemeister torsion on 3-manifolds with respect to gluings along a surface.
This certainly is a problem of interest: to name a few examples of the importance of gluings, Heegaard splittings are one of the main ingredients in the construction of Heegaard Floer homology (\cite{szabo}), and multiplicativity with respect to gluings is one of the fundamental axioms of Topological Quantum Field Theories (\cite{atiyah}).

Our reference model is the following. We consider a (closed) 3-manifold endowed with an Euler structure, 
and we split it into two submanifolds $M_1,M_2$ along a surface $S$. As we have complete freedom in the choice of $S$, we need to define Euler structures of $M_1,M_2$ as equivalence classes of vector fields with a generic behavior on the boundary $S$.

The definition of Euler structure is the object of Section~\ref{sctn1}. In particular, we describe the action of the first integer homology group on combinatorial and smooth Euler structures and we recover Turaev's reconstruction map $\Psi$, i.e., an equivariant bijection from combinatorial to smooth Euler structure.

In Section~\ref{sctn2} we define Reidemeister torsion of a pair $(M,\mathfrak{e}^c)$, where $M$ is a 3-manifold and $\mathfrak{e}^c$ is a combinatorial Euler structure. If $\mathfrak{e}^s$ is a smooth Euler structure, the torsion of $(M,\mathfrak{e}^s)$ is defined as the torsion of $(M,\Psi^{-1}(\mathfrak{e}^s))$. We emphasize that we need a way to explicitly invert the reconstruction map in order to effectively compute torsion (this will be the subject of Section \ref{sctn4}).

Section~\ref{sctn3} is devoted to the proof of Theorem~\ref{thm.gluing}, informally stated below:

\begin{teo*}
Reidemeister torsion acts multiplicatively with respect to gluings. Namely, given a smooth compact oriented closed 3-manifold $M$ and an embedded surface $S$ splitting $M$ into two submanifolds $M_1,M_2$:
\begin{itemize}
\item a representative of an Euler structure $\mathfrak{e}$ on $M$ induces Euler structures $\mathfrak{e}_1,\mathfrak{e}_2$ on $M_1,M_2$;
\item Reidemeister torsion of $(M,\mathfrak{e})$ is the product of Reidemeister torsions of $(M_1,\mathfrak{e}_1)$ and $(M_2,\mathfrak{e}_2)$, times a corrective term $\mathfrak{T}$ coming from the homologies.
 \end{itemize}
\end{teo*}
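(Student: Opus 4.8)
The plan is to reduce the gluing formula to Milnor's multiplicativity of torsion for short exact sequences of based chain complexes, applied to the Mayer--Vietoris sequence of the decomposition $M = M_1 \cup_S M_2$, and then to carry out the bookkeeping that converts the chain-level identity into one about Reidemeister--Turaev torsion with the prescribed Euler structures. First I would fix a smooth triangulation (or handle decomposition / CW structure) of $M$ for which $S$, $M_1$, $M_2$ are subcomplexes, and pass to the coefficient system over $\mathbb{Z}[H_1(M)]$ (or the localization where torsion is defined); restriction along the inclusions induces compatible coefficient systems on $M_1$, $M_2$, $S$. At the chain level this yields a short exact sequence $0 \to C_*(S) \to C_*(M_1) \oplus C_*(M_2) \to C_*(M) \to 0$ of complexes of free modules. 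Choosing cell-lifts (equivalently, geometric bases) on $S$ first and extending them consistently to $M_1$, $M_2$, $M$, Milnor's theorem gives
\[
\tau(M_1) \cdot \tau(M_2) \;=\; \pm\, \tau(S) \cdot \tau(M) \cdot \tau(\mathcal{H}),
\]
where $\mathcal{H}$ is the long exact Mayer--Vietoris homology sequence regarded as an acyclic based complex; solving for $\tau(M)$ isolates the candidate corrective term $\mathfrak{T} = \pm\,\tau(S)^{-1}\,\tau(\mathcal{H})^{-1}$ (up to the usual convention for inverses). An equivalent route, which I would keep in reserve in case the surface term is awkward, is to apply the pair version of multiplicativity to $(M,M_1)$ and use excision $\tau(M,M_1)=\tau(M_2,S)$.

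The heart of the argument — and the step I expect to be the main obstacle — is to show that the geometric bases used above are precisely the ones dictated by the Euler structures, via Turaev's reconstruction map $\Psi$. Concretely, starting from a representative vector field $v$ of $\mathfrak{e}$ on $M$ in generic position with respect to $S$, the restrictions $v|_{M_1}$ and $v|_{M_2}$ (with the prescribed generic, mutually opposite behavior along $\partial M_i = S$) determine Euler structures $\mathfrak{e}_1,\mathfrak{e}_2$, and I must check that this assignment is well defined, i.e.\ independent of the choice of $v$ within $\mathfrak{e}$ and of the genericization, and equivariant for the $H_1$-actions — here the material of Section~\ref{sctn1} on the $H_1$-actions on smooth and combinatorial Euler structures and on $\Psi$ does the work. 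Then I would show that the combinatorial Euler structures $\Psi^{-1}(\mathfrak{e})$, $\Psi^{-1}(\mathfrak{e}_1)$, $\Psi^{-1}(\mathfrak{e}_2)$ can be represented by Euler chains whose barycenter paths are compatible across the Mayer--Vietoris sequence, so that the chosen lifts of cells in $C_*(M_1)$, $C_*(M_2)$, $C_*(S)$ differ only by the group-ring units recorded by these Euler chains; this is exactly what is needed for the displayed equation to compute the \emph{normalized} torsions $\tau(M,\mathfrak{e})$, $\tau(M_i,\mathfrak{e}_i)$ rather than torsions depending on an arbitrary basis choice.

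Finally I would clean up the remaining pieces: analyze $\tau(S)$ for the restricted coefficient system (either computing it directly for a surface or absorbing it into $\mathfrak{T}$), pin down bases on Mayer--Vietoris homology that make $\tau(\mathcal{H})$ — and hence $\mathfrak{T}$ — a genuine invariant of the homological data of the triple $(M_1,M_2,S)$ together with the coefficient systems, and track the sign and $H_1$ indeterminacies so that the identity holds in the appropriate quotient group (e.g.\ the multiplicative group of the coefficient ring modulo $\pm H_1(M)$). Degenerate cases, where the inclusion-induced maps on $\pi_1$ fail to be injective and some of the torsions are only defined over a larger ring or carry extra ambiguity, would be handled by working throughout over that larger ring, or excluded by hypothesis as in the statement of the gluing theorem.
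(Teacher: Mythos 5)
Your high-level plan---apply Milnor's multiplicativity to the Mayer--Vietoris sequence for $M = M_1\cup_S M_2$ and track Euler-structure-compatible bases---is the right starting point and is indeed what the paper does with sequence~\eqref{gs3}. But there is a genuine gap: you treat the torsions of the pieces as if they were computed from the absolute twisted complexes $C_*^{\varphi}(M_i)$, whereas the paper's Reidemeister torsion of a manifold with generic boundary pattern $\mathcal{P}$ is by definition the torsion of the relative complex $C_*^{\varphi}(M_i,\overline{W})\oplus C_*^{\varphi}(\overline{C},Q^+)$ (and dually for $M_2$). This is not a notational quirk: the cells of that complex are exactly those of $M_i\setminus(W\cup V\cup Q^+)$, i.e.\ the cells appearing in the defining boundary condition for an Euler chain, so this is the only complex for which a combinatorial Euler structure actually fixes the basis up to sign. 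A single Mayer--Vietoris sequence therefore only expresses $\tau^{\varphi}(M)$ in terms of the absolute $\tau^{\varphi}(M_i)$ and $\tau^{\varphi}(S)$, which are not the invariants appearing in the statement. Your excision fallback ($\tau(M,M_1)=\tau(M_2,S)$) also does not close the gap, because the needed conversion is not excision but a cascade of further short exact sequences keyed to the boundary stratification $W,B,V,C,Q^{\pm}$ of $S$.

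Concretely, the paper applies Theorem~\ref{milnor} seven times, to the sequences \eqref{gs1}--\eqref{gs7}, each time choosing fundamental families compatible in Milnor's sense. Sequences \eqref{gs6} and \eqref{gs7} pass from $C_*^{\varphi}(M_i)$ to the relative complexes $C_*^{\varphi}(M_1,\overline{W})$ and $C_*^{\varphi}(M_2,\overline{B})$; sequences \eqref{gs4} and \eqref{gs5} do the analogous step for $\overline{C}$ and $\overline{V}$; and sequences \eqref{gs1},\eqref{gs2} decompose $\tau^{\varphi}(S)$ and $\tau^{\varphi}(G)$ along the stratification. The resulting corrective term is $\mathfrak{T}=(\tau_a)^{-1}\tau_b(\tau_c)^{-1}\tau_d\tau_e\tau_f\tau_g$, a product of torsions of long exact homology sequences and hence genuinely ``coming from the homologies.'' By contrast your $\mathfrak{T}=\pm\,\tau(S)^{-1}\tau(\mathcal{H})^{-1}$ retains the chain-level factor $\tau(S)$, which is not of that form, and the absolute $\tau(M_i)$ on the other side are not what the theorem is about. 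Your paragraph on representing $\Psi^{-1}(\mathfrak{e})$, $\Psi^{-1}(\mathfrak{e}_i)$ by compatible Euler chains and fundamental families is sound and does match the paper's intentions, but the compatibility must be arranged simultaneously across all seven sequences, not just one.
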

This theorem greatly extends a preceiding result due to Turaev (\cite[Lemma~VI.3.2]{turaev1}), which holds in his very special setting only ($S$ is an union of tori and Euler structures are equivalence classes of vector fields everywhere transversal to the boundary). The closure of $M$ is not a necessary hypothesis, we have assumed it only to simplify notations and proof (an extension to the case with boundary is stated, without proof, in Remark~\ref{rmk.gluing}). 
In the end of Section~\ref{sctn3} we show some computations, aimed at simplify the term $\mathfrak{T}$.

Finally, in Section~\ref{sctn4} we describe a combinatorial encoding of Euler structures in order to explicitly invert the reconstruction map $\Psi$.
The key tool will be a generalized version of standard spines (the stream-spines described in \cite{petronio}), that allows to encode vector fields with generic behavior on the boundary.

In our work, we have focused on the abelian version of Reidemeister torsion (in order to simplify the algebraic machinery); all the results extend with minimal modifications to the non-abelian case. Section~\ref{sctn1}, \ref{sctn2}, \ref{sctn4} follow the exposition and the ideas of 
\cite{benedetti}, where we have a first extension of Turaev's theory to the case of vector fields with simple boundary tangencies.

\begin{ack}
This paper results from the elaborations of my master
degree thesis at the University of Pisa. I thank my advisor Riccardo Benedetti for having suggested me this question and for several valuable discussions during the preparation of the paper.
\end{ack}

\section{Euler structures}\label{sctn1}

We  consider generic vector fields on a 3-manifold $M$ and we show that their behavior on the boundary $\partial M$ is fixed by the choice of a boundary pattern $\mathcal{P}$. We define the sets $\mathfrak{Eul}^c(M,\mathcal{P})$ of combinatorial Euler structures (equivalence classes of singular integer 1-chains) and $\mathfrak{Eul}^s(M,\mathcal{P})$ of smooth Euler structures (equivalence classes of generic vector fields). We describe the action of the first integer homology group $H_1(M)$ and the construction of the equivariant bijection $\Psi:\mathfrak{Eul}^c(M,\mathcal{P})\rightarrow\mathfrak{Eul}^s(M,\mathcal{P})$.

\subsection{Generic vector fields}\label{Generic vector fields}

We first introduce the object of our investigation:
\begin{ntz}
In what follows, with the word \emph{3-manifold} we will always understand a smooth compact oriented manifold of dimension 3.
\end{ntz}
Let $M$ be a 3-manifold and $\mathfrak{v}$ a non-singular vector field on $M$.
In general, there is a wide range of possible behaviors of $\mathfrak{v}$ on the boundary $\partial M$.
However, throught an easy adjustment of Whitney's results (\cite{whitney}), one can prove that, up to a small modification of the field $\mathfrak{v}$, the local models for the pair $(\partial M,\mathfrak{v})$ are the three in \figurename~\ref{boundary} only.

\begin{figure}[b]
\centering
\includegraphics[scale=0.7]{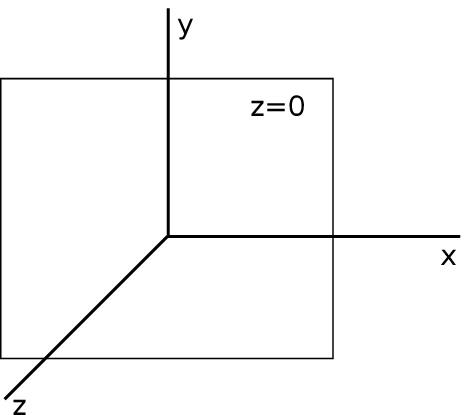}%
\qquad
\includegraphics[scale=0.7]{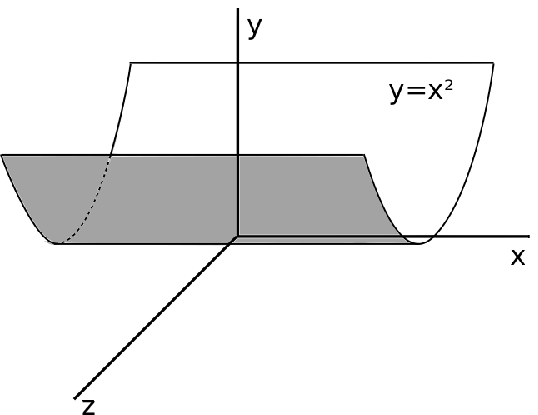}%
\qquad
\includegraphics[scale=0.7]{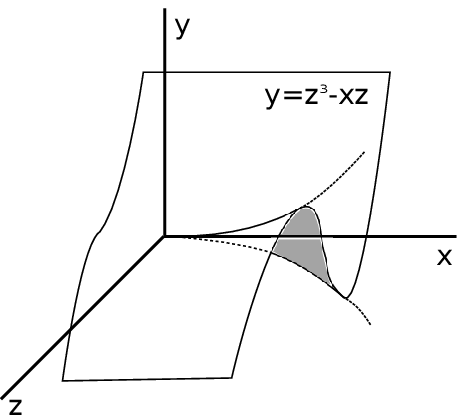}
\caption{Possible configurations of the boundary in a neighborhood of a point $p\in\partial M$. The coordinates are chosen in such a way that $p$ coincides with the origin and the vector field is headed in the $z$ direction.}\label{boundary}
\end{figure}

\begin{figure}[b]
\centering
\includegraphics[scale=1]{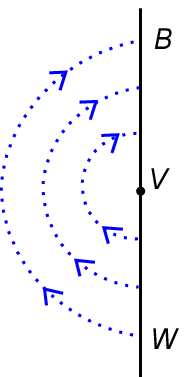}%
\qquad\qquad
\includegraphics[scale=1]{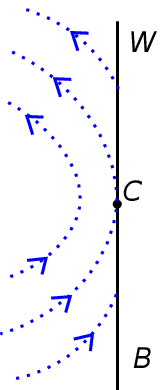}
\caption{Convex (on the left) and concave (on the right) points on the boundary.}\label{convex-concave}
\end{figure}

Therefore, given a non-singular vector field $\mathfrak{v}$ on $M$, $\mathfrak{v}$ can be slightly modified to obtain a new vector field  with the following properties:

\begin{enumerate}
\item $\mathfrak{v}$ is still non-singular on $M$;

\item $\mathfrak{v}$ is transverse to $\partial M$ in each point, except for an union $G\subset\partial M$ of circles, in which $\mathfrak{v}$ is tangent to $\partial M$;

\item $\mathfrak{v}$ is tangent to $G$ in a finite set $Q$ of points only. 
\end{enumerate}

A vector field on $M$ satisfying conditions 1,2,3 is called \emph{generic}.
A generic vector field $\mathfrak{v}$ induces a partition $\mathcal{P}=(W,B,V,C,Q^+,Q^-)$ on $\partial M$ where:

\begin{itemize}
\item $W\cup B=\partial M\setminus G$ is the set of \emph{regular} points (\figurename~\ref{boundary}-left), i.e. the points in which $\mathfrak{v}$ is transverse to $\partial M$. $W$ is the \emph{white part}, i.e., the set of the points in $\partial M$ for which $\mathfrak{v}$ is directed inside $M$;
$B$ is the \emph{black part}, i.e., the set of the points in $\partial M$ for which $\mathfrak{v}$ is directed outside $M$. 
$W$ and $B$ are interior of compact surfaces embedded in $\partial M$, and $\partial W=\partial B=G$.

\item $V\cup C=G\setminus Q$ is the set of \emph{fold} points (\figurename~\ref{boundary}-center). $V$ is the \emph{convex} part, i.e., the set of points in $G$ for which $\mathfrak{v}$ is directed towards $B$;
$C$ is the \emph{concave} part, i.e., the set of points in $G$ for which $\mathfrak{v}$ is directed towards $W$. The names (convex and concave) are justified by the cross-section in \figurename~\ref{convex-concave}. 
$V$ and $C$ are disjoint unions of circles and open segments, and $\partial V=\partial C=Q$.

\item $Q^+\cup Q^-=Q$ is the set of \emph{cuspidal} points (\figurename~\ref{boundary}-right). $Q^+$ is the set of points where $\mathfrak{v}$ is directed towards $C$; $Q^-$ is the set of points where $\mathfrak{v}$ is directed towards $V$.
\end{itemize}

Such a partition $\mathcal{P}$ is called a \emph{boundary pattern} on $\partial M$. 
A generic vector field $\mathfrak{v}$ and a boundary pattern $\mathcal{P}$ are said to be \emph{compatible} if $\mathcal{P}$ is induced by $\mathfrak{v}$, up to a diffeomorphism of $M$.

\begin{nta}
A more general work, due to Morin (\cite{morin}), generalizes the results of Whitney in every dimension. This would probably allow to extend results of Sections \ref{sctn1}, \ref{sctn2}, \ref{sctn3} to dimensions greater than 3.
\end{nta}

\subsection{Euler structures}


A combing is a pair $[M,\mathfrak{v}]$, where $M$ is a 3-manifold and $\mathfrak{v}$ is a generic vector field on $M$, viewed up to diffeomorphism of $M$ and homotopy of $\mathfrak{v}$. We denote by $\mathfrak{Comb}$ the set of all combings. Notice that, under a homotopy of $\mathfrak{v}$, the boundary pattern on $\partial M$ changes by an isotopy. Therefore, to a combing $[M,\mathfrak{v}]$ is associated a pair $(M,\mathcal{P})$ viewed up to diffeomorphism of $M$, and $\mathfrak{Comb}$ naturally splits as the disjoint union of subsets $\mathfrak{Comb}(M,\mathcal{P})$ of combings on $M$ compatible with $\mathcal{P}$. 

Two classes  $[M,\mathfrak{v}_1], [M,\mathfrak{v}_2]\in\mathfrak{Comb}(M,\mathcal{P})$ are said to be \emph{homologous} if $\mathfrak{v}_1,\mathfrak{v}_2$ are obtained from each other by homotopy throught vector fields compatible with $\mathcal{P}$ and modifications supported into closed interior balls (that is, up to homotopy, $\mathfrak{v}_1, \mathfrak{v}_2$ coincide outside a ball contained in $\mathrm{Int}\, M$).

The quotient of $\mathfrak{Comb}(M,\mathcal{P})$ throught the equivalence relation of homology is denoted by $\mathfrak{Eul}^s(M,\mathcal{P})$, and its elements are called \emph{smooth Euler structures}.

\begin{prp}\label{non-emptyness2}
$\mathfrak{Eul}^s(M,\mathcal{P})$ is non-empty if and only if $\chi(M)-\chi(W)-\chi(V)-\chi(Q^+)=0$.
\end{prp}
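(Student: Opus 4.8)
The plan is to reduce the statement to the classical obstruction-theoretic fact that a non-singular vector field exists on a manifold with (generic) boundary behavior if and only if a suitably relative Euler number vanishes. First I would observe that the quantity $\chi(M)-\chi(W)-\chi(V)-\chi(Q^+)$ is exactly the obstruction to extending, over the interior of $M$, a non-singular section of the tangent bundle that is prescribed near $\partial M$ by the boundary pattern $\mathcal{P}$; so the real content is to identify this obstruction with the given combinatorial expression. The strategy: take a smooth triangulation (or CW structure) of $M$ adapted to the decomposition $\partial M = W\cup B\cup V\cup C\cup Q^+\cup Q^-$, and build a vector field cell by cell, pushing the obstruction up the skeleta in the usual way. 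Since $TM$ is trivial (M is oriented and $3$-dimensional, hence parallelizable), a non-singular vector field is the same as a map $M\to S^2$ with prescribed behavior near the boundary, and the obstruction to a generic such field lives in $H^3(M,\partial M;\pi_2(S^2))\cong\mathbb{Z}$ (using that $M$ is connected; if not, argue componentwise).

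The key steps, in order. \emph{Step 1}: Fix the parallelization of $TM$ and a collar of $\partial M$; use the three local models of \figurename~\ref{boundary} to write down an explicit model vector field on the collar whose boundary pattern is $\mathcal{P}$, and note that any two such models are homotopic rel the combinatorial data. \emph{Step 2}: Extend this field over the $2$-skeleton of $M$ (relative to $\partial M$) with isolated zeros; this is always possible since $\pi_1(S^2)=0$. \emph{Step 3}: Compute the total index of the zeros obtained when one tries to push the field across the $3$-cells. By a Poincaré--Hopf argument for manifolds with the given generic boundary (a relative Poincaré--Hopf theorem), this total index equals $\chi(M)$ corrected by the Euler characteristics of the parts of the boundary where the field points inward versus is tangent in the two fold-directions — precisely $\chi(M)-\chi(W)-\chi(V)-\chi(Q^+)$. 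One clean way to see the correction: double $M$ along the white part, or alternatively use the formula for the index of a vector field tangent to the boundary along folds, where each convex fold contributes like an interior critical point of one index and each concave fold of another; bookkeeping the cusp points $Q^\pm$ accounts for the $\chi(Q^+)$ term. \emph{Step 4}: If this total index is zero, the isolated zeros can be cancelled in pairs inside $\mathrm{Int}\,M$ (Whitney's trick / the fact that $S^2$ is simply connected lets us cancel a $+1$ and a $-1$ zero within a ball), yielding a genuine generic non-singular vector field compatible with $\mathcal{P}$; conversely, if a generic non-singular field exists, running the index count on it forces the expression to vanish. This gives both implications.

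The main obstacle I expect is \emph{Step 3}: getting the boundary correction term exactly right, with the correct signs, for the generic (Morin-type) boundary behavior rather than just the transverse case. The classical relative Poincaré--Hopf theorem handles a field transverse to $\partial M$ (where one subtracts $\chi$ of the inward-pointing part), but here the field is tangent along the folds $G$ and one must carefully track the contributions of the convex locus $V$, the concave locus $C$, and the cusps $Q^\pm$. The cleanest route is probably a normal-form computation near $G$: in the fold local model the field, restricted to a slice transverse to $G$, looks like a planar field tangent to a parabola, and its index relative to the half-plane can be read off directly; summing these local contributions along $G$ and handling the finitely many cusp points where $V$ meets $C$ yields the $-\chi(V)-\chi(Q^+)$ correction (the asymmetry between $V$ and $C$, and between $Q^+$ and $Q^-$, coming from the choice of which side the field points toward). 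Once this local analysis is pinned down, assembling it into the global count via the triangulation is routine. I would also remark that non-emptiness depends only on $\mathcal{P}$ and the diffeomorphism type of $M$, not on any auxiliary choices, which is automatic from the obstruction-theoretic description.
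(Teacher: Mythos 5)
Your proposal takes a genuinely different route from the paper. The paper deliberately avoids the direct index-theoretic argument: it observes that the combinatorial statement (Proposition~\ref{non-emptyness}) is immediate, since the signed count of endpoints in the boundary of an Euler chain is by construction $\chi(M)-\chi(W)-\chi(V)-\chi(Q^+)$, and then reads off the smooth statement from the equivariant bijection $\Psi$ of Theorem~\ref{reconstruction map}. This makes Proposition~\ref{non-emptyness2} a two-line corollary, at the price of deferring all the real geometry to the reconstruction theorem. What you propose is the direct proof via a relative Poincaré--Hopf theorem for generic (Morin-type) boundary tangency, which the paper explicitly mentions as an alternative (``a direct proof can be established $\ldots$ as an application of the Hopf theorem,'' citing the analogous Prop.~1.1 of Benedetti--Petronio). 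So your route is legitimate and independent of the reconstruction map; it buys you a self-contained proof that does not need the full strength of Theorem~\ref{reconstruction map}, whereas the paper's route is essentially free once that theorem is in hand.

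That said, the argument as written is not complete. You correctly identify Step~3 --- the boundary correction $-\chi(W)-\chi(V)-\chi(Q^+)$ in the index formula --- as the crux, and the formula you aim for is the right one, but you only sketch the normal-form computation along the fold locus and at the cusps. This is precisely the nontrivial content: the asymmetry between $V$ and $C$ and between $Q^+$ and $Q^-$ has to come out of a careful local index count in the fold and cusp models of Figure~\ref{boundary}, and the ``doubling along $W$'' suggestion would need to be adapted to handle the tangency locus, since a field tangent to $\partial M$ along $G$ does not double to a smooth field in the obvious way. Until those local contributions are actually computed and assembled, the proof has a gap at its central step; the remaining steps (triviality of $TM$, extension over the $2$-skeleton using $\pi_1(S^2)=0$, cancellation of zeros inside interior balls) are standard and fine.
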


\begin{proof}
This result will be an immediate consequence of Proposition~\ref{non-emptyness} and Theorem~\ref{reconstruction map} below.
A direct proof can be enstablished in a way similar to \cite[Prop.~1.1]{benedetti}, as an application of the Hopf theorem. 
\end{proof}

Let $H_1(M)$ be the first integer homology group of $M$.
It is a standard fact of obstruction theory (see \cite[\S~5.2]{turaev} for more details) that the map 
$$\alpha^s:\mathfrak{Eul}^s(M,\mathcal{P})\times\mathfrak{Eul}^s(M,\mathcal{P})\rightarrow H_1(M),
$$
which associates to a pair $(\mathfrak{e_1},\mathfrak{e}_2)$ the first obstruction $\alpha_s(\mathfrak{e_1},\mathfrak{e}_2)\in H_1(M)$ to their homotopy, is well defined. The map $\alpha^s$ defines an action of $H_1(M)$ on $\mathfrak{Eul}^s(M,\mathcal{P})$.
\\

Recall that every 3-manifold admits a cellularization; this is a consequence of the Hauptvermutung or of Theorem~\ref{whitehead} below.
A finite cellularization $\mathcal{C}$ of $M$ is called \emph{suited} to $\mathcal{P}$ if points in $Q^+$ and $Q^-$ are 0-cells of $\mathcal{C}$ and $G=V\cup C\cup Q^+\cup Q^-$ is a subcomplex. Let such a $\mathcal{C}$ be given. 
Denote by $E_{\mathcal{C}}$ the union of the cells of $M\setminus(W\cup V\cup Q^+)$.
An \emph{Euler chain} is an integer singular 1-chain $\xi$ in $M$ such that
\begin{equation}\label{Euler chains}
\partial\xi=\sum_{e\subset E_{\mathcal{C}}} (-1)^{\dim(e)}\cdot x_e
\end{equation}
where $x_e\in e$ for all $e$.

Given two Euler chains $\xi,\xi'$ with boundaries $\partial\xi=\sum(-1)^{\dim (e)} x_e$, $\partial\xi'=\sum (-1)^{\dim (e)} y_e$, we say that $\xi, \xi'$ are \emph{homologous} if, chosen for each $e\in E_{\mathcal{C}}$ a path $\alpha_e$ from $x_e$ to $y_e$, the $1$-cycle
$$\xi-\xi'+\sum_{e\in E_{\mathcal{C}}} (-1)^{\dim (e)} \alpha_e$$
represents the class $0$ in $H_1(M)$.

Define $\mathfrak{Eul}^c(M,\mathcal{P})_{\mathcal{C}}$ as the set of homology classes of Euler chains. The following result was proved by Turaev (see \cite[\S~1.2]{turaev}) in his framework, but extends to our setting without significant modifications.

\begin{prp}
If $\mathcal{C'}$ is a subdivision of $\mathcal{C}$, then there exists a canonical $H_1(M)$-isomorphism $\mathfrak{Eul}^c(M,\mathcal{P})_{\mathcal{C}}\rightarrow\mathfrak{Eul}^c(M,\mathcal{P})_{\mathcal{C'}}$.
\end{prp}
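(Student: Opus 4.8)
The plan is to construct the isomorphism directly from the definitions, exploiting the fact that a subdivision only refines the cell structure without changing the underlying space.

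Let me think about this. We have a cellularization $\mathcal{C}$ suited to $\mathcal{P}$, and $\mathcal{C}'$ a subdivision. First thing: I need to check $\mathcal{C}'$ is still suited to $\mathcal{P}$ — points of $Q^{\pm}$ remain 0-cells (subdivision only adds cells, existing 0-cells stay 0-cells), and $G$ remains a subcomplex (subdivision of a complex containing a subcomplex contains a subdivision of that subcomplex). Good.

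Now the key structural observation: $E_{\mathcal{C}}$ is the union of cells of $M \setminus (W \cup V \cup Q^+)$. When we subdivide, each cell $e$ of $E_{\mathcal{C}}$ gets subdivided into smaller cells, and these all still lie in $M\setminus(W\cup V\cup Q^+)$, so $E_{\mathcal{C}'}$ is precisely the subdivision of $E_{\mathcal{C}}$. The crucial algebraic fact is that for a cell $e$ of dimension $d$ subdivided into cells $e'_1, \dots, e'_k$, we have $\sum_i (-1)^{\dim e'_i} = \chi(e) \cdot (\text{something})$... actually the relevant identity is $\sum_i (-1)^{\dim(e'_i)} = (-1)^{\dim e}$ — this is because the Euler characteristic of an open $d$-cell (which is $(-1)^d$, or rather the compactly-supported Euler characteristic / the "combinatorial Euler characteristic" of an open simplex) is preserved under subdivision. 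So the formal sum $\sum_{e \subset E_{\mathcal{C}}} (-1)^{\dim e} x_e$ and $\sum_{e' \subset E_{\mathcal{C}'}} (-1)^{\dim e'} x_{e'}$ represent "the same" 0-chain up to choosing representative points, modulo boundaries.

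So here is the construction. Given an Euler chain $\xi$ for $\mathcal{C}$ with $\partial \xi = \sum_{e \subset E_{\mathcal{C}}} (-1)^{\dim e} x_e$, I want to produce an Euler chain $\xi'$ for $\mathcal{C}'$. For each cell $e$ of $E_{\mathcal{C}}$, pick a point $x_{e'} \in e'$ in each sub-cell $e'$ of $e$; then $\sum_{e' \subset e}(-1)^{\dim e'} x_{e'}$ and $(-1)^{\dim e} x_e$ are homologous as 0-chains \emph{within} $\overline{e}$ (which is contractible), say via a 1-chain $\delta_e$ supported in $\overline{e}$. Set $\xi' = \xi + \sum_{e} \delta_e$. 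One checks $\partial \xi' = \sum_{e'\subset E_{\mathcal{C}'}} (-1)^{\dim e'} x_{e'}$, so $\xi'$ is an Euler chain for $\mathcal{C}'$. I must verify this is well-defined on homology classes: different choices of the $x_{e'}$, of the $\delta_e$, and of $\xi$ within its class all change $\xi'$ by a null-homologous cycle — this follows because all the ambiguities are supported in contractible closed cells $\overline{e}$, hence bound there. Then $H_1(M)$-equivariance is immediate since the action is by adding 1-cycles in the interior and our construction commutes with that. Finally, the inverse map (and hence bijectivity): given an Euler chain for $\mathcal{C}'$, one shows its homology class is independent of whether we view its boundary points as grouped by $\mathcal{C}'$-cells or by the $\mathcal{C}$-cells containing them — this gives a well-defined class in $\mathfrak{Eul}^c(M,\mathcal{P})_{\mathcal{C}}$, and the two maps are mutually inverse by construction.

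\textbf{The main obstacle} is the well-definedness check: confirming that the combinatorial Euler characteristic identity $\sum_{e' \subset e} (-1)^{\dim e'} = (-1)^{\dim e}$ holds for the relevant notion of subdivision (which must be verified for the kind of cellularizations in play — simplicial or CW with the Hauptvermutung/Whitehead triangulation available), and then carefully tracking that every choice made in building $\xi'$ contributes only a cycle bounding inside some contractible closed cell, so that the induced map on homology classes of Euler chains is genuinely independent of choices. Canonicity — that the isomorphism does not depend on auxiliary data and is compatible with iterated subdivisions — then follows formally from the fact that the construction is "as local as possible," being built cell-by-cell inside contractible pieces.
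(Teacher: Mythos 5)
The paper gives no proof of its own, simply citing Turaev \cite[\S~1.2]{turaev}. Your reconstruction --- using the identity $\sum_{e'\subset e}(-1)^{\dim e'}=(-1)^{\dim e}$ for the cells $e'$ of $\mathcal{C}'$ subdividing a cell $e$ of $\mathcal{C}$, then connecting the new boundary points to the old one by $1$-chains $\delta_e$ supported in the contractible closed cells so that every ambiguity in the construction bounds locally --- is precisely the standard (Turaev's) argument and is correct.
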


Thus, the set $\mathfrak{Eul}^c(M,\mathcal{P})$ is canonically defined up to $H_1(M)$-isomorphism, independently of the cellularization. The elements of $\mathfrak{Eul}^c(M,\mathcal{P})$ are called \emph{combinatorial Euler structure} of $M$ compatible with $\mathcal{P}$.

\begin{prp}\label{non-emptyness}
$\mathfrak{Eul}^c(M,\mathcal{P})$ is non-empty if and only if $\chi(M)-\chi(W)-\chi(V)-\chi(Q^+)=0$.
\end{prp}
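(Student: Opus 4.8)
The plan is to reduce the existence of an Euler chain to a purely homological question about the $0$-chain on the right-hand side of \eqref{Euler chains}, and then to identify the degree of that $0$-chain with the Euler-characteristic combination $\chi(M)-\chi(W)-\chi(V)-\chi(Q^+)$.

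First I would fix a finite cellularization $\mathcal{C}$ of $M$ suited to $\mathcal{P}$, which exists by the remarks preceding the statement. Form the $0$-chain $c = \sum_{e\subset E_{\mathcal{C}}} (-1)^{\dim e}\, x_e$, where $x_e$ is an arbitrarily chosen interior point of each cell $e$ of $E_{\mathcal{C}} = M\setminus(W\cup V\cup Q^+)$. The key elementary observation is that a singular $1$-chain $\xi$ with $\partial\xi = c$ exists if and only if $c$ represents the zero class in $H_0(M;\mathbb{Z})$: indeed $\mathrm{Im}\,\partial_1 = \ker(H_0 \text{-projection})$ on the chain level, i.e. $c$ is a boundary iff its class in $H_0(M;\mathbb{Z}) \cong \mathbb{Z}^{b_0}$ vanishes. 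Since $M$ is connected (a standing assumption here — or, in the general case, one argues component by component), $H_0(M;\mathbb{Z})\cong\mathbb{Z}$ via the degree (sum of coefficients) map, so the condition becomes simply $\deg(c) = \sum_{e\subset E_{\mathcal{C}}} (-1)^{\dim e} = 0$.

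Next I would compute $\deg(c)$ as an Euler characteristic. By definition of the cellular Euler characteristic, $\sum_{e\subset E_{\mathcal{C}}} (-1)^{\dim e}$ is the Euler characteristic of the subcomplex $E_{\mathcal{C}}$ (i.e. of $M$ with the open cells of $W$, $V$, $Q^+$ removed). Using that $\mathcal{C}$ is suited to $\mathcal{P}$ — so that $W$, $V$ and $Q^+$ are honest subcomplexes and $G = V\cup C\cup Q^+\cup Q^-$ is a subcomplex — one has an additive/inclusion-exclusion count of cells giving $\chi(E_{\mathcal{C}}) = \chi(M) - \chi(W) - \chi(V) - \chi(Q^+)$. (Concretely: the cells of $M$ are partitioned into those lying in $W$, in $V$, in $Q^+$, and the rest; $\chi$ being a signed cell count, it is additive over this partition, and $\chi$ of the closure versus the open stratum agree up to the lower strata, which are already accounted for — a short bookkeeping argument using $\chi(G) = \chi(V)+\chi(C)+\chi(Q^+)+\chi(Q^-)$ and $\chi(\overline{W}) = \chi(W) + \chi(G)$, etc.) Combining the two displayed equalities, $\mathfrak{Eul}^c(M,\mathcal{P})_{\mathcal{C}}\neq\emptyset \iff \chi(M)-\chi(W)-\chi(V)-\chi(Q^+) = 0$; and by the preceding proposition this is independent of $\mathcal{C}$, so it passes to $\mathfrak{Eul}^c(M,\mathcal{P})$.

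The main obstacle I anticipate is the bookkeeping in the second step — carefully matching the signed cell count of the open stratum $E_{\mathcal{C}}$ to the combination of Euler characteristics of the (closed, possibly-with-boundary) pieces $W$, $V$, $Q^+$, making sure the shared boundary cells in $G$ are neither double-counted nor dropped, and handling the open segments in $V$ and $C$ correctly. This is where being \emph{suited} to $\mathcal{P}$ is used essentially, and it is worth checking against the analogous computation in \cite{benedetti}. Everything else — the chain-level characterization of boundaries and the connectedness reduction — is routine.
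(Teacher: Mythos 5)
Your argument is correct and follows the same route as the paper: the paper's entire proof is the single observation that the algebraic count of points on the right-hand side of the Euler-chain condition equals $\chi(M)-\chi(W)-\chi(V)-\chi(Q^+)$, which is precisely your second step, with your first step (a $0$-chain bounds iff its degree vanishes on each connected component) left implicit. You merely supply the supporting details --- the chain-level characterization of boundaries, the reduction to the connected case, and the signed cell-count bookkeeping for the open strata $W$, $V$, $Q^+$ --- that the paper dismisses as immediate.
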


\begin{proof}
Follows immediately from the observation that the algebraic number of points appearing on the right side of \eqref{Euler chains} is $\chi(M)-\chi(W)-\chi(V)-\chi(Q^+)$.
\end{proof}

It is easy to obtain an action of $H_1(M)$ on $\mathfrak{Eul}^c(M,\mathcal{P})$: it is the one induced by the map
$$
\alpha^c:\mathfrak{Eul}^c(M,\mathcal{P})\times\mathfrak{Eul}^c(M,\mathcal{P})\rightarrow H_1(M) 
$$
defined by $\alpha^c(\mathfrak{e}_1,\mathfrak{e}_2)= [\mathfrak{e}_1 - \mathfrak{e}_2]$.

\subsection{Reconstruction map}

A fundamental result is that combinatorial and differentiable approach are equivalent, as stated by the following theorem.

\begin{teo}\label{reconstruction map}
There exists a canonical $H_1(M)$-equivariant isomorphism
$$
\Psi:\mathfrak{Eul}^c(M,\mathcal{P})\rightarrow\mathfrak{Eul}^s(M,\mathcal{P})
$$
\end{teo}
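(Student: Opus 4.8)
The plan is to construct $\Psi$ explicitly, following Turaev's original scheme adapted to the boundary pattern $\mathcal{P}$, and then check independently that it is well defined, $H_1(M)$-equivariant, injective and surjective. The starting point is to fix a cellularization $\mathcal{C}$ suited to $\mathcal{P}$ and, as Turaev does, pass to a handle decomposition (or the dual picture): each $k$-cell $e$ of $\mathcal{C}$ contributes a handle, and on the union of handles coming from cells in $E_{\mathcal{C}} = M\setminus(W\cup V\cup Q^+)$ one builds a canonical non-singular vector field $\mathfrak{v}_0$ with exactly one zero of index $(-1)^{\dim e}$ in each such handle — these are the points $x_e$ appearing in \eqref{Euler chains}. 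On the complementary region (the handles dual to cells in $W\cup V\cup Q^+$) the field can be chosen non-singular and with the prescribed generic behavior on $\partial M$ dictated by $\mathcal{P}$; here one invokes the local models of \figurename~\ref{boundary} to see that the pattern $\mathcal{P}$ is exactly what is needed to extend $\mathfrak{v}_0$ over the collar without creating zeros. Given an Euler chain $\xi$ with $\partial\xi = \sum_e (-1)^{\dim e} x_e$, one then cancels the zeros of $\mathfrak{v}_0$ in pairs along $\xi$ by the standard Hopf/Whitney move (a finger move supported in a tube around $\xi$), producing a genuine generic vector field $\mathfrak{v}_\xi$ compatible with $\mathcal{P}$. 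Set $\Psi([\xi]) := [M,\mathfrak{v}_\xi]$.

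The next step is well-definedness: I would show that the homology class of $[M,\mathfrak{v}_\xi]$ depends only on the homology class of $\xi$ in $\mathfrak{Eul}^c(M,\mathcal{P})$, and not on the auxiliary choices ($\mathcal{C}$, $\mathfrak{v}_0$, the ordering and paths used in the cancellation). The key computation is that two choices of $\xi$ that differ by a boundary give fields differing by a modification supported in a ball, hence homologous in $\mathfrak{Eul}^s$; and that changing $\mathfrak{v}_0$ or the handle decomposition changes $\Psi$ by a global reindexing that is absorbed by the canonical $H_1(M)$-isomorphisms of Proposition~\ref{non-emptyness}'s preceding statement. Independence of the cellularization uses the subdivision isomorphism already stated in the excerpt.

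For the remaining properties I would argue as follows. Equivariance: if $\xi' = \xi + c$ with $c$ a $1$-cycle representing $h\in H_1(M)$, then the tube used to cancel zeros along $\xi'$ differs from the one for $\xi$ by winding along $c$, and the first obstruction $\alpha^s(\Psi[\xi'],\Psi[\xi])$ is computed — again by the standard obstruction-theory count — to be exactly $[c]=h=\alpha^c([\xi'],[\xi])$; so $\Psi$ intertwines $\alpha^c$ and $\alpha^s$. Since both $\mathfrak{Eul}^c$ and $\mathfrak{Eul}^s$ are torsors over $H_1(M)$ once they are non-empty (by the actions $\alpha^c,\alpha^s$ together with Propositions~\ref{non-emptyness} and~\ref{non-emptyness2}, whose non-emptiness conditions coincide), an $H_1(M)$-equivariant map between them is automatically a bijection: it is injective because the difference class vanishes forces the torsor elements to agree, and surjective because the orbit is everything. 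Thus equivariance plus the matching non-emptiness criteria upgrade $\Psi$ to an isomorphism, and canonicity follows from the canonicity of all the ingredients up to the stated $H_1(M)$-isomorphisms.

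The main obstacle is the construction and verification of $\mathfrak{v}_0$ near the boundary: one must check that the three local models of \figurename~\ref{boundary}, assembled according to $\mathcal{P}=(W,B,V,C,Q^+,Q^-)$, actually fit together into a non-singular field on the boundary collar whose index contribution on $E_{\mathcal{C}}$ reproduces the Euler-chain boundary \eqref{Euler chains} — equivalently, that the "white/convex/positive-cuspidal" cells are exactly the ones that must be excised for the count to come out as $\chi(M)-\chi(W)-\chi(V)-\chi(Q^+)$. This is where our generic-boundary setting genuinely departs from Turaev's transverse-boundary case, and it is the step that requires the careful local analysis; everything after it is a torsor-theoretic formality combined with the now-standard Hopf-theorem bookkeeping, which can be imported essentially verbatim from \cite{turaev} and \cite{benedetti}.
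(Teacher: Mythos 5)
Your overall strategy matches the paper's exactly: build a canonical singular field from the cellularization with index $(-1)^{\dim\sigma}$ at each cell, verify that its singularities match $\partial\xi$ for an Euler chain $\xi$, desingularize along $\xi$, and then deduce bijectivity from $H_1(M)$-equivariance via the torsor structure on both sides. The paper even uses the same ``fundamental field'' $\mathfrak{w}_{\mathcal{C}'}$ of Halperin--Toledo that your handle picture is dual to, and it closes with precisely the one-line torsor argument you give. So the skeleton is right, and you have also correctly located the point of genuine novelty relative to Turaev.

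However, what you flag as ``the main obstacle'' is not a verification to be done afterwards --- it \emph{is} the proof, and your proposal does not actually carry it out. You assert that ``the field can be chosen non-singular and with the prescribed generic behavior on $\partial M$'' near the cells of $W\cup V\cup Q^+$, invoking the local models of \figurename~\ref{boundary}, but the local models only tell you what the desired end-state looks like; they do not tell you how to get the already-constructed field $\mathfrak{w}_{\mathcal{C}'}$ (which has zeros in \emph{every} cell, including those of $W$, $V$, $Q^+$) to exhibit that behavior along a new boundary. The paper's resolution is the collar trick: attach $\partial M\times[0,\infty)$ to get $M'$, extend the cellularization, build the fundamental field there, and then exhibit a height function $h:\partial M\to(-1,1)$ so that the shifted manifold $M_h$ (a level set of the collar coordinate) contains exactly the zeros $\{p_\sigma:\sigma\in E_\mathcal{C}\}$ and its boundary inherits the pattern $\mathcal{P}$. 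Constructing $h$ is the technically delicate step: it requires the auxiliary two-variable function $g$ with prescribed tangency to the square's fundamental field, the composition $g(\tfrac{2}{3}g(\pm\tfrac{1}{3},s),t)$ near cuspidal points to manufacture the $Q^\pm$ behavior, and a normalization (Hp3) of the star of a vertex in $G$ to keep the case analysis manageable. None of this is ``torsor-theoretic formality'' and none of it is imported verbatim from \cite{turaev} or \cite{benedetti}, since the earlier references have no cuspidal points at all. Your write-up correctly announces that this is where the generic-boundary setting departs from the transverse case, but it leaves the departure unbridged; as it stands the proposal is a correct reduction of the theorem to the lemma that the boundary pattern $\mathcal{P}$ can be realized on a level set of the collared fundamental field, and that lemma is exactly what the paper's proof establishes.
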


The map $\Psi$ in the theorem is called \emph{reconstruction map}, and it is explicitly constructed in the proof of the theorem.

\begin{proof}
The proof follows the scheme of \cite[Thm.~1.4]{benedetti}, which in turn is an extension of \cite[\S~6]{turaev}. 

Let $M'$ be the manifold obtained by attaching the collar $\partial M\times[0,+\infty)$ along $\partial M$, in such a way that $\partial M\times \{0\}$ is identified with $\partial M$. Consider a cellularization $\mathcal{C}$ of $M$: $\mathcal{C}$ extends to a ``cellularization'' $\mathcal{C}'$ on $M'$ by attaching a cone to every cell of $\partial M$ and then removing the vertex.
Notice that some of the cells of $\mathcal{C}'$ have ideal vertices, thus $\mathcal{C}'$ is not a proper cellularization. 

We set the following hypotheses:
\begin{enumerate}
\item[(Hp1)] $\mathcal{C}$ is suited with $\mathcal{P}$;
\item[(Hp2)] $\mathcal{C}$ is obtained by face-pairings on a finite number of polyhedra, and the projection of each polyhedron to $M$ is smooth.
\end{enumerate}
Such a cellularization certainly exists: for instance, a triangulation $\mathcal{T}$ of $M$ satisfy (Hp2), and up to subdivision we can suppose that $\mathcal{T}$ is suited with $\mathcal{P}$.

For a cellularization satisfying (Hp2), one can recover the  ``first barycentric subdivision'' of $\mathcal{C}'$, that will be denoted by $\mathcal{C}''$. Its vertices are the points $\{p_{\sigma}\}_{\sigma\in\mathcal{C}'}$, where $p_{\sigma}$ is inside the open cell $\sigma$ for all $\sigma\in\mathcal{C}'$. Moreover, it is well defined a canonical vector field $\mathfrak{w}_{\mathcal{C}'}$ with the following properties:
\begin{itemize}
\item $\mathfrak{w}_{\mathcal{C}'}$ has singularities, of index $(-1)^{\dim\sigma}$, in the points $p_{\sigma}$ only;
\item the orbits of $\mathfrak{w}_{\mathcal{C}'}$ start (asintotically) from a point $p_{\sigma}$ and end (asintotically) in a point $p_{\sigma'}$ with $\sigma'\subset\sigma$.
\end{itemize}
\figurename~\ref{ffield} shows the behavior of $\mathfrak{w}_{\mathcal{C}'}$ on a triangle. The exact definition of $\mathfrak{w}_{\mathcal{C}'}$ is given in \cite{halperin} for triangulations, but extends to our cellularization without complications.
From now on, $\mathfrak{w}_{\mathcal{C}'}$ will be called \emph{fundamental field} of the cellularization $\mathcal{C}'$.

\begin{figure}

\centering

\includegraphics[scale=0.7]{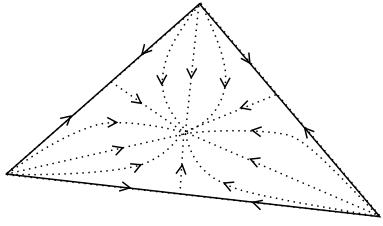}

\caption{$\mathfrak{w}_{\mathcal{T}'}$ on a triangle.}\label{ffield}

\end{figure}

Let $N$ be the star of $\partial M$ in $\mathcal{C}''$; identify $N$ with $\partial M\times (-1,1)$, in such a way that $\partial M\times (-1,0]= M\cap N$.

Given a map $h:\partial M\rightarrow (-1,1)$, we denote by $M_h$ the  manifold 
$$
M_h=(M\setminus N)\cup\{(x,t)\in\partial M\times(-1,1)\cong N:t\leq h(x)\}.
$$
Notice that $M$ and $M_{h}$ are isomorphic. We want to choose $h$ in such a way that $\mathfrak{w}_{\mathcal{C}'}$ has no singularities on $\partial M_h$ and induces on $\partial M_h\cong\partial M$ the boundary pattern $\mathcal{P}$.

Let $Q=Q^+\cup Q^-$ and $G=V\cup C\cup Q$ (recall that $G$ is a disjoint union of circles and $Q\subset G$ is a finite union of points).
Denote by $U\subset \partial M$ the star of $G$ in $\mathcal{C}''_{\partial}$ (where $\mathcal{C}''_{\partial}$ is the restriction of $\mathcal{C}''$ to $\partial M$). We have a diffeomorphism $U\cong G\times (-1,1)$ such that:
$$
G\cong G\times\{0\}\quad ;\quad U\cap W\cong G\times (-1,0)\quad ;\quad U\cap B\cong G\times(0,1)
$$
On $\partial M\setminus U$ we define $h$ by:
$$
h(p)=
\begin{cases}
-\frac{1}{2}            & \mbox{, if } p\in W\setminus U\\
\frac{1}{2}  & \mbox{, if } p\in B\setminus U
\end{cases}
$$
Obviously $\mathfrak{w}_{\mathcal{C}'}$ points outside $M_h$ on $W$ and inside $M_h$ on $B$, as wished. 

It remains to define $h$ on $U$. To simplify the exposition, we are going to make the following hypothesis on the cellularization $\mathcal{C}$:
\begin{enumerate}
\item[(Hp3)] The star in $\mathcal{C}'$ of each 0-cell $p$ is formed by eight 3-cells, arranged in such a way that the star in $\mathcal{C}''$ of $p$ has the form shown in \figurename~\ref{functiong}-left.
\end{enumerate}
It is clear that a cellularization satisfying (Hp1), (Hp2), (Hp3) exists: again, one starts from a triangulation $\mathcal{T}$ of $M$ suited with $\mathcal{P}$. By unifying or subdividing some of the simplices of $\mathcal{T}$, one obtains a cellularization (that still satisfies (Hp1), (Hp2)) such that the star in $\mathcal{C}'$ of each 0-cell $p$ is formed by four 3-cells, disposed in the right way (namely, the boundary of each 3-cell does not contain both the convex and concave line incident in $p$). The extension of this cellularization to a ``cellularization'' $\mathcal{T}'$ of $M'$ satisfies (Hp3).

We also need to define a preliminary continuous function $g:[-\frac{1}{3},\frac{1}{3}]\times [-1,1]\rightarrow [-\frac{1}{2},\frac{1}{2}]$ as follows. Consider the square $[-1,1]\times[-1,1]$ and the fundamental field of its obvious cellularization (4 vertices, 4 edges and one 2-cell). For $\bar x\in [-\frac{1}{3},\frac{1}{3}]\setminus \{0\}$, we impose the one-variable function $g_{\bar x}(t)=g(\bar x,t)$ to be an increasing function with all derivatives zero in $-1,1$, with $g_{\bar x}(1)=\frac{1}{2}, g_{\bar x}(-1)=-\frac{1}{2}$ and with the property that the fundamental field is tangent to the curve $t\mapsto (t,g_{\bar x}(t))$  for $t=\bar x$ only. 
For $\bar x=0$: $g_0(t)=g(0,t)$ is a strictly increasing function with all derivatives zero in $-1,1$, with $g_0(1)=\frac{1}{2}, g_0(-1)=-\frac{1}{2},g_0(0)=0$ and never tangent to the fundamental field.
It is clear that such a function $g$ exists: we show $g_{\bar x}$ in \figurename~\ref{functiong}-center,right. We will avoid its explicit construction, that is not very significant.

\begin{figure}

\centering

\includegraphics[scale=0.35]{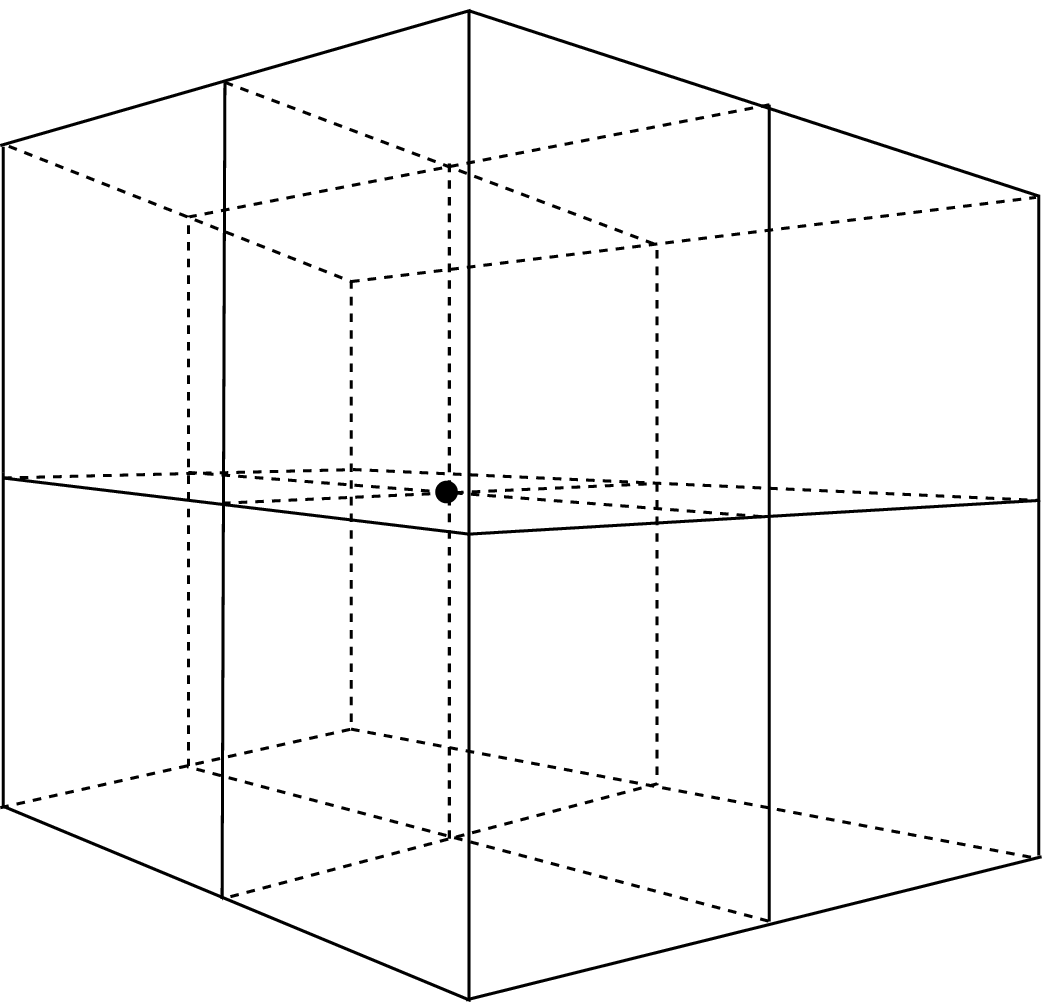}%
\qquad
\includegraphics[scale=0.55]{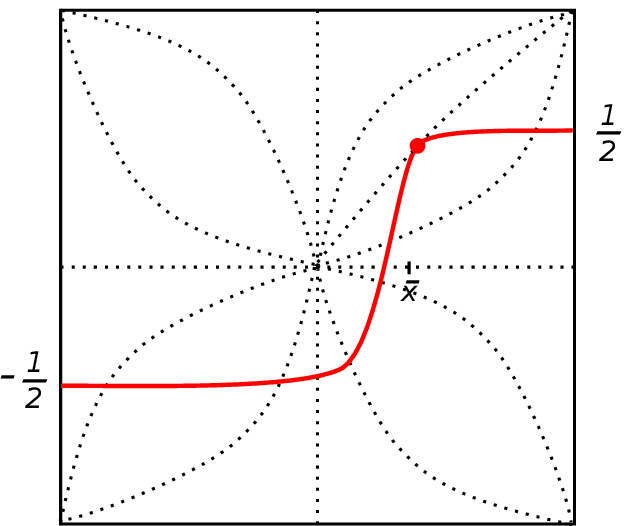}%
\qquad
\includegraphics[scale=0.55]{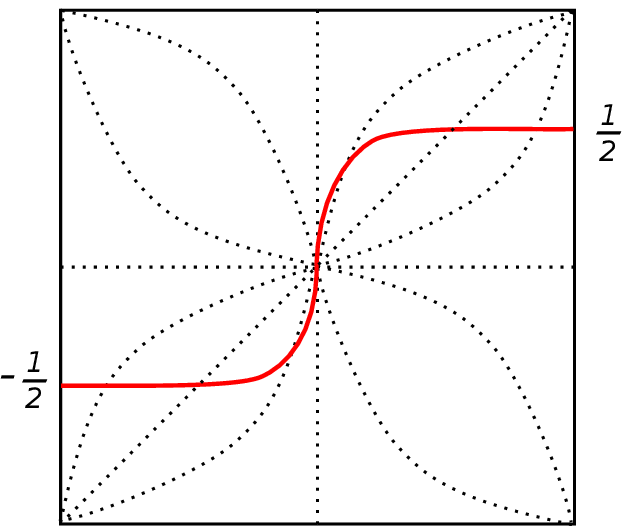}

\caption{The star of a vertex in $G$ (left); the function $g_{\bar x}$ (center) and $g_0$ (right).}\label{functiong}

\end{figure}

Let $T\subset G$ be the star of $Q$ in $\mathcal{C}''_G$, where $\mathcal{C}''_G$ is the restriction of $\mathcal{C}''$ to $G$ ($T$ is just a disjoint union of segments). Let $U_V,U_C,U_Q\subset U$ be the stars of $V,C,Q$ in $\mathcal{C}''_{\partial}$. Identify $U_V,U_C,U_G$ with $V\times (-1,1), C\times (-1,1),T\times (-1,1)$ consistently with the identification of $U$ with $G\times (-1,1)$. 

On $U\setminus U_Q$, define $h$ as follows:
$$
h(s,t)=
\begin{cases}
g(\frac{1}{3},t)             & \mbox{, if } (s,t)\in U_V\setminus U_Q\cong (V\setminus T)\times (-1,1) \\
g(-\frac{1}{3},t)            & \mbox{, if } (s,t)\in U_C\setminus U_Q\cong(C\setminus T)\times (-1,1).
\end{cases}
$$
It is clear that $h$ induces the wished pattern: to the points of $V\setminus T$ corresponds a convex point in $\partial M_h$ (\figurename~\ref{nearconvex}), while to the points in $C\setminus T$ corresponds a concave point in $\partial M_h$ (\figurename~\ref{nearconcave}).

\begin{figure}

\centering

\includegraphics[scale=0.6]{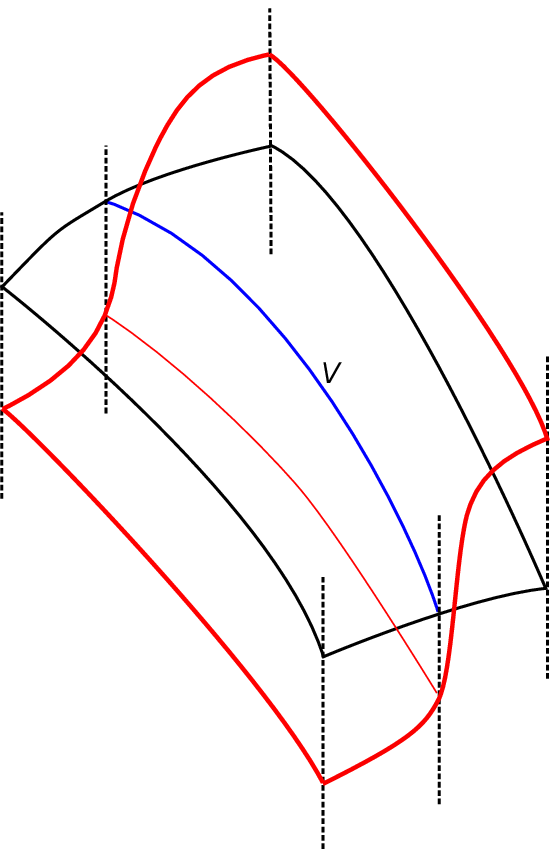}%
\qquad\qquad
\includegraphics[scale=0.6]{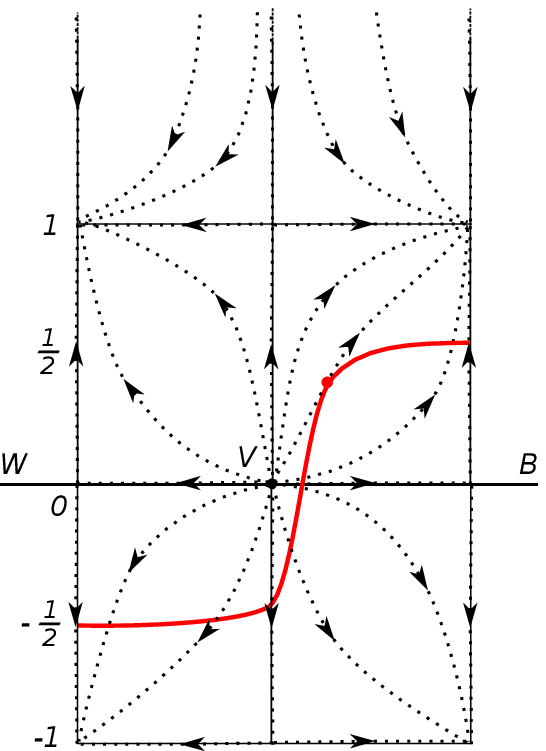}

\caption{The field $\mathfrak{w}_{\mathcal{C}'}$ has convex tangency on $V$.}\label{nearconvex}

\end{figure}

\begin{figure}

\centering

\includegraphics[scale=0.6]{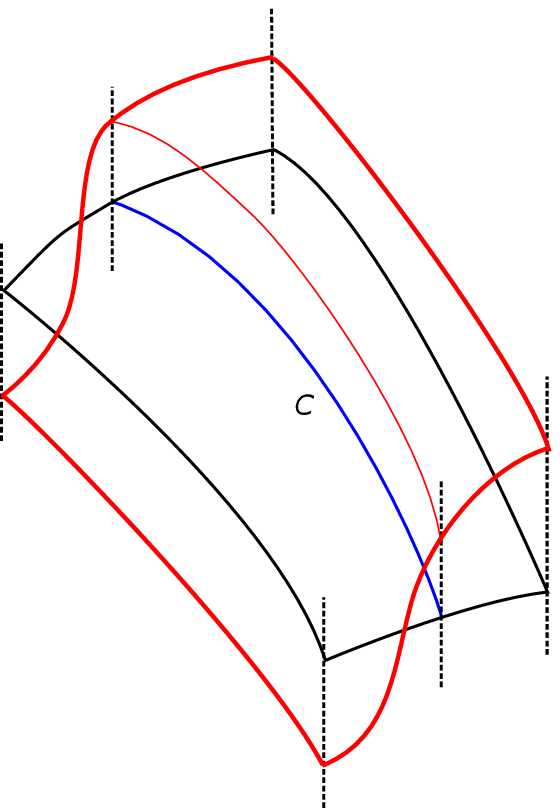}%
\qquad\qquad
\includegraphics[scale=0.6]{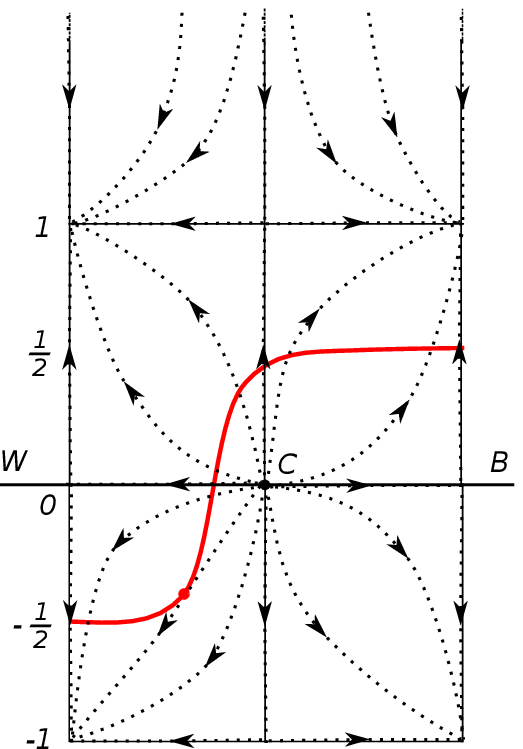}

\caption{The field $\mathfrak{w}_{\mathcal{C}'}$ has concave tangency on $C$.}\label{nearconcave}

\end{figure}

It only remains to define $h$ on $U_Q\cong T\times (-1,1)$. Identify each connected component of $T$ with $(-1,1)$ in such a way that $(-1,0)\subset C$, $(0,1)\subset V$. Now each connected component of $U_Q$ is identified with the square $(-1,1)\times(-1,1)$ in such a way that:
$$
U_Q\cap W\cong (-1,1)\times (-1,0)\quad ;\quad U_Q\cap B\cong (-1,1)\times(0,1)\quad;
$$
$$
 U_Q\cap U_C\cong (-1,0)\times (-1,1)\quad ;\quad U_Q\cap U_V\cong (0,1)\times (-1,1).
$$
If $U_{Q^+},U_{Q^-}$ are the stars of $Q^+,Q^-$ in $\mathcal{C}'_{\partial}$, we have $U_Q=U_{Q^+}\cup U_{Q^-}$.
Set:
$$
h(s,t)=
\begin{cases}
g\left(\frac{2}{3}\, g(-\frac{1}{3},s),t\right)             & \mbox{, if } (s,t)\in U_{Q^+}\cong(-1,1)\times (-1,1)\\
g\left(\frac{2}{3}\, g(\frac{1}{3},s),t\right)            & \mbox{, if } (s,t)\in U_{Q^-}\cong (-1,1)\times (-1,1).
\end{cases}
$$
The behavior of $h$ near $Q^+$ and $Q^-$ is described in \figurename~\ref{nearcusps} and \figurename~\ref{nearcuspscr}. It is clear that we can choose $g$ in such a way that $\mathfrak{w}_{\mathcal{T}'}$ is tangent to $\partial M_h$ only on the green and yellow line (the best way to convince ourself about it is by choosing $g$ in such a way that $g_{\bar x}$ is everywhere constant, except in a small neighborhood of $\bar x$, where it quickly increase from $-\frac{1}{2}$ to $\frac{1}{2}$). Notice that to each point in $Q^+$ corresponds a positive cuspidal point and to each point in $Q^-$ corresponds a negative cuspidal point.
Now $h$ is a smooth function defined on all $\partial M$ and $\mathfrak{w}_{\mathcal{T}'}$ induces the wished partition $\mathcal{P}$ on $\partial M_h$.

\begin{figure}

\centering

\includegraphics[scale=0.6]{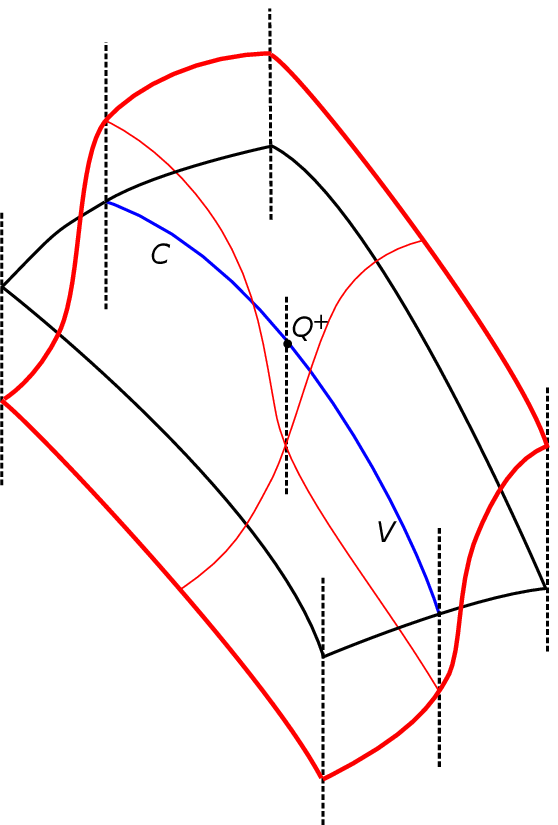}%
\qquad\qquad
\includegraphics[scale=0.7]{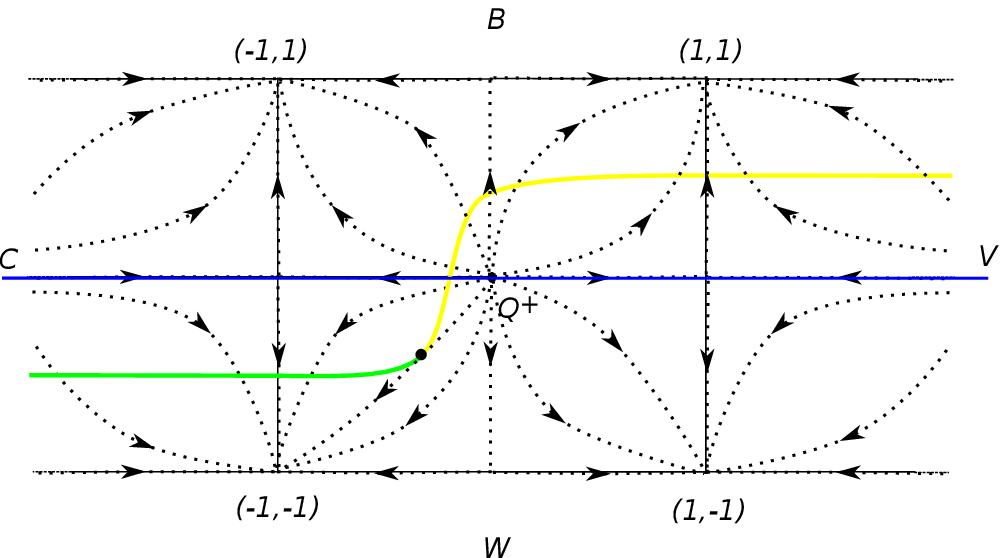}

\caption{Behavior of $\partial M_h$ near $Q^+$ (left). View from above of $\mathfrak{w}_{\mathcal{C}'}$ near $Q^+$ (right): the field goes from a concave tangency line (green) to a convex tangency line (yellow) throught a positive cuspidal point.}\label{nearcusps}

\end{figure}

\begin{figure}

\centering

\includegraphics[scale=0.6]{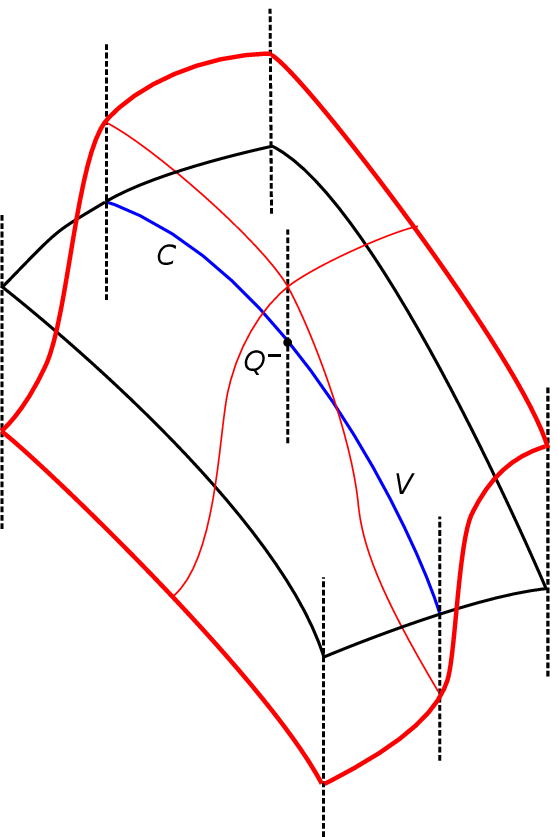}%
\qquad\qquad
\includegraphics[scale=0.7]{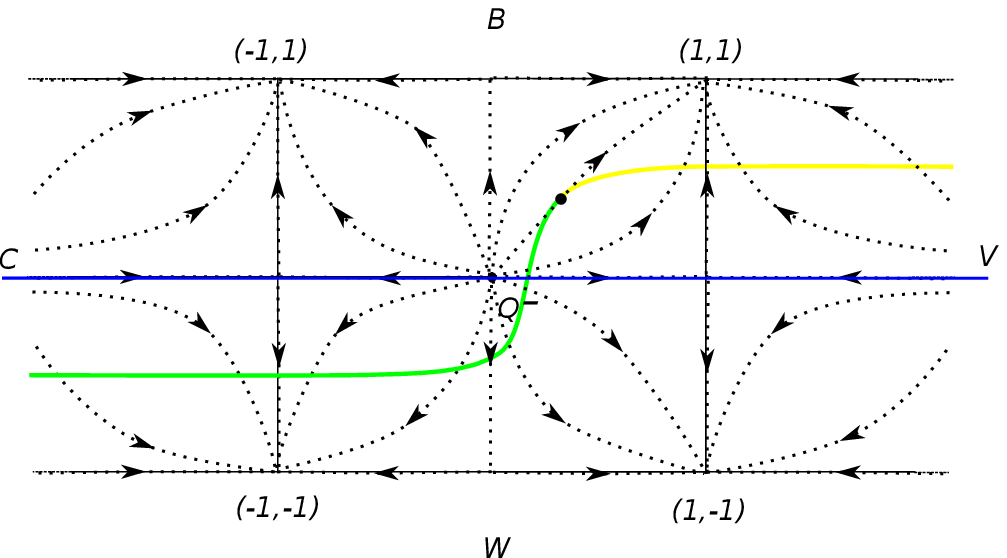}

\caption{Behavior of $\partial M_h$ near $Q^-$ (left). View from above of $\mathfrak{w}_{\mathcal{C}'}$ near $Q^-$ (right): the field goes from a concave tangency line (green) to a convex tangency line (yellow) throught a negative cuspidal point.}\label{nearcuspscr}

\end{figure}

Remember that $\mathfrak{w}_{\mathcal{C}'}$ has singularities in the 0-cells $p_{\sigma}$ of $\mathcal{C}''$.
Consider a combinatorial Euler structure $\mathfrak{e}^c\in\mathfrak{Eul}^c(M,\mathcal{P})$, and a representative $\xi$ of $\mathfrak{e}^c$. We can suppose that
$$
\partial\xi=\sum_{\sigma\in E_{\mathcal{C}}} (-1)^{\dim\sigma}\cdot p_{\sigma},
$$
where $E_{\mathcal{C}}$ is the union of the cells of $\mathcal{C}$ in $M\setminus (W\cup V\cup Q^+)$.
Notice that $\partial\xi$ consists exactly of the singularities of $\mathfrak{w}_{\mathcal{C}'}$ in $M_h$, each taken with its index.
Moreover the sum of the indices of these singularities is zero, hence it is possible to modify the field on a neighborhood of the support of $\xi$ in order to remove them. In this way, we obtain a non-singular vector field $\mathfrak{w}_{\mathcal{C}'}^{\xi}$ on $M_h\cong M$, representing a smooth Euler structure $\Psi(\mathfrak{e}^c)\in\mathfrak{Eul}^s(M,\mathcal{P})$. Turaev's proof that $\Psi$ is well defined and $H_1(M)$-equivariant extends to our case without particular modifications. The $H_1(M)$-equivariance proves the bijectivity of $\Psi$.
\end{proof}

\begin{nta}
The bijectivity of $\Psi$ is obtained indirectly from the $H_1(M)$-equivariance, while the explicit construction of the inverse $\Psi^{-1}$ is an harder task. 
In Section \ref{sctn4} we will see how to invert $\Psi$ using stream-spines. 
\end{nta}

\begin{nta}\label{fund field}

While hypotheses (Hp1), (Hp2) on the cellularization $\mathcal{C}$ are necessary, the hypothesis (Hp3) is not fundamental. The proof above can be repeated without using (Hp3): in the construction of $h$ inside $U_Q$, one have to distinguish various cases, depending on the form of the stars of the cuspidal points.
\end{nta}

\begin{ntz}
Theorem~\ref{reconstruction map} allows us to ease the notation: if there is no ambiguity, we will write $\mathfrak{Eul}(M,\mathcal{P})$ to denote either $\mathfrak{Eul}^c(M,\mathcal{P})$ or $\mathfrak{Eul}^s(M,\mathcal{P})$; $\alpha$ to denote either $\alpha^c$ or $\alpha^s$.
\end{ntz}

\section{Reidemeister torsion}\label{sctn2}

Definitions in Sections \ref{Chain complexes} and \ref{Twisted homology} are known facts, preparatory to Section \ref{Torsion of a 3-Manifold}, where Reidemeister torsion of a pair $(M,\mathcal{P})$ is defined. The main result is Proposition~\ref{Reid Tors}, which shows that the ambiguity in the definition of Reidemeister torsion is fixed (up to sign) by the choice of an Euler structure $\mathfrak{e}\in\mathfrak{Eul}(M,\mathcal{P})$.

\subsection{Torsion of a chain complex}\label{Chain complexes}

Consider a finite chain complex over a field $\mathbb{F}$
$$C=\left(C_m\xrightarrow{\partial_m} C_{m-1}\xrightarrow{\partial_{m-1}}\cdots\xrightarrow{\partial_2} C_1\xrightarrow{\partial_1} C_0\right).$$
By \emph{finite} we mean that every vector space $C_i$ has finite dimension. We fix bases $\mathfrak{c}=(\mathfrak{c}_0,\dots,\mathfrak{c}_m)$ and $\mathfrak{h}=(\mathfrak{h}_0,\dots,\mathfrak{h}_m)$ of $C$ and $H_*(C)$ respectively. With this notation, we mean that $\mathfrak{c}_i$ (resp. $\mathfrak{h}_i$) is a basis of $C_i$ (resp. $H_i(C)$) for all $i=0,\dots,m$.

For all $i=0,\dots,m$, we choose an arbitrary basis $\mathfrak{b}_i$ of the $i$-boundaries $B_i=\mbox{Im}(\partial_{i+1})$, and we consider the short exact sequence:
\begin{equation}\label{ses1}
0\rightarrow B_i\rightarrow Z_i \rightarrow H_i(C) \rightarrow 0
\end{equation}
where $Z_i=\mbox{Ker}(\partial_i)$ is the group of the $i$-cycles.
By inspecting sequence \eqref{ses1}, it is clear that a basis for $Z_i$ is obtained by taking the union of $\mathfrak{b}_i$ and a lift $\widetilde{\mathfrak{h}}_i$ of $\mathfrak{h}_i$.

Now, consider the exact sequence:
\begin{equation}\label{ses2}
0\rightarrow Z_i \rightarrow C_i \xrightarrow{\partial_i} B_{i-1} \rightarrow 0
\end{equation}

By \eqref{ses2}, $\mathfrak{b}_i\widetilde{\mathfrak{h}}_i\widetilde{\mathfrak{b}}_{i-1}$ (where $\mathfrak{b}_i\widetilde{\mathfrak{h}}_i$ is the basis of $Z_i$ constructed above and $\widetilde{\mathfrak{b}}_{i-1}$ is a lift of $\mathfrak{b}_{i-1}$) is a basis for $C_i$.

We can define the torsion of the chain complex $C$ as a sort of difference between the basis $\mathfrak{c}$ and the new basis of $C$ obtained above.

\begin{ntz}
Given two bases $\mathfrak{A},\mathfrak{B}$ of the finite-dimensional vector space $V$, we denote by $[\mathfrak{A}/\mathfrak{B}]$ the determinant of the matrix that represents the change of basis from $\mathfrak{A}$ to $\mathfrak{B}$ (i.e., the matrix whose columns are the vectors of $\mathfrak{A}$ written in coordinates with respect to the basis $\mathfrak{B}$). 
\end{ntz}

The \emph{torsion} of the chain complex $C$ is defined by:
\begin{equation}\label{dfn:torsion}
\tau(C;\mathfrak{c},\mathfrak{h})=\prod_{i=0}^m [\mathfrak{b}_i\widetilde{\mathfrak{h}}_i\widetilde{\mathfrak{b}}_{i-1}/\mathfrak{c}_i]^{(-1)^{i+1}}\in\mathbb{F}
\end{equation}

The torsion $\tau(C,\mathfrak{c},\mathfrak{h})$ depends uniquely on the equivalence classes of the bases $\mathfrak{c}_i,\mathfrak{h}_i$, and does not depend on the choice of $\mathfrak{b}_i$ and of the lifts $\widetilde{\mathfrak{b}}_i,\widetilde{\mathfrak{h}}_i$.

The following is an interesting result, that will be fundamental in Section \ref{sctn3}.

\begin{teo}[Milnor \cite{milnor1}]\label{milnor}
Consider a short exact sequence of finite complexes
$$
0\rightarrow C'\rightarrow C\rightarrow C''\rightarrow 0
$$
and the corresponding long exact sequence in homology
$$
\mathcal{H}=\left(H_m(C')\rightarrow H_m(C)\rightarrow\cdots\rightarrow H_0(C)\rightarrow H_0(C'')  \right).
$$
 $\mathcal{H}$ can be viewed as a finite acyclic chain complex. Fix bases $\mathfrak{h}',\mathfrak{h},\mathfrak{h}''$ on $H_*(C'), H_*(C), H_*(C'')$ respectively. This gives a basis on $\mathcal{H}$; denote by $\tau(\mathcal{H})$ the torsion of $\mathcal{H}$, computed with respect to this basis. Choose \emph{compatible} bases $\mathfrak{c}',\mathfrak{c},\mathfrak{c}''$ on $C',C,C''$ (by compatible, we mean that $\mathfrak{c}$ is the union of $\mathfrak{c}'$ and a lift of $\mathfrak{c}''$). With these hypothesis, the following formula holds:
$$
\tau(C;\mathfrak{c},\mathfrak{h})=\tau(\mathcal{H})\cdot\tau(C';\mathfrak{c}',\mathfrak{h}')\cdot\tau(C'';\mathfrak{c}'',\mathfrak{h}'')
$$
\end{teo}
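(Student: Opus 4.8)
The plan is to reduce the multiplicativity statement to a purely algebraic assertion about a commutative diagram of chain complexes and bases, following the classical argument of Milnor. First I would set up the algebra carefully: from the short exact sequence $0\to C'\to C\to C''\to 0$ and the choice of compatible bases, for each degree $i$ pick bases $\mathfrak{b}'_i$ of the boundaries $B'_i$ of $C'$, bases $\mathfrak{b}''_i$ of the boundaries $B''_i$ of $C''$, and note that the boundaries $B_i$ of $C$ sit in a short exact sequence $0\to B'_i\to B_i\to \bar B_i\to 0$ where $\bar B_i$ is the image of $B_i$ in $C''_i$; this $\bar B_i$ is generally \emph{not} equal to $B''_i$, and the discrepancy is exactly what the torsion $\tau(\mathcal H)$ will measure. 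I would choose a basis $\mathfrak{b}_i$ of $B_i$ by taking $\mathfrak{b}'_i$ together with a lift of a chosen basis of $\bar B_i$.

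Next I would write down, for each $i$, the two relevant filtrations: the cycle sequences $0\to B_i\to Z_i\to H_i\to 0$ for $C',C,C''$, and the sequences $0\to Z_i\to C_i\to B_{i-1}\to 0$, and observe that all of these fit into a $3\times 3$ grid of short exact sequences compatible with the big short exact sequence of complexes. The key computational step is then to express the new distinguished basis $\mathfrak{b}_i\widetilde{\mathfrak h}_i\widetilde{\mathfrak b}_{i-1}$ of $C_i$ in terms of the corresponding distinguished bases of $C'_i$ and $C''_i$, up to the change-of-basis matrices coming from the connecting homomorphisms and from the difference between $\bar B_i$ and $B''_i$. Taking the alternating product over $i$ of the determinants $[\,\cdot\,/\mathfrak c_i]$, the $C'$ and $C''$ contributions separate off cleanly because the bases $\mathfrak c$ are compatible, while the cross-terms reassemble precisely into the torsion of the long exact homology sequence $\mathcal H$ with its given basis.

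The main technical obstacle, and the part I would have to be most careful with, is bookkeeping the signs and the ordering of the basis vectors. The factor $(-1)^{i+1}$ in the definition of $\tau$, combined with the shifts in degree that occur when one passes from $B_{i-1}\subset C_i$ to the homology sequence (where the connecting map $H_i(C'')\to H_{i-1}(C')$ drops degree by one), means that the terms of $\mathcal H$ must be indexed in a way that is consistent with the sign conventions in \eqref{dfn:torsion}; a careless choice produces the formula with $\tau(\mathcal H)^{\pm1}$ or with spurious signs. I would handle this by fixing once and for all an explicit enumeration of $\mathcal H$ (say $H_m(C'),H_m(C),H_m(C''),H_{m-1}(C'),\dots$) and checking the sign in a low-dimensional model case, e.g.\ a short exact sequence of two-term complexes, to pin down the conventions.

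Finally I would note that, since all the intermediate choices (the boundary bases $\mathfrak b_i,\mathfrak b'_i,\mathfrak b''_i$, the lifts $\widetilde{\mathfrak b}_i,\widetilde{\mathfrak h}_i$) wash out in each of the three torsions individually — this is the last sentence following \eqref{dfn:torsion} — the identity need only be verified for one convenient system of choices, which is what makes the explicit basis comparison above legitimate. I would remark that this is Milnor's theorem and simply cite \cite{milnor1} for the full verification, presenting the above as the outline; in the context of this paper the theorem is used as a black box in Section~\ref{sctn3}, so a complete reproof is unnecessary.
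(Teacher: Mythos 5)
The paper does not actually prove Theorem~\ref{milnor}: it is stated with the attribution ``Milnor \cite{milnor1}'' and then used as a black box in Section~\ref{sctn3}. Your decision to defer to the citation therefore matches the paper exactly, and the general shape of your outline --- choose compatible bases, compare the distinguished bases $\mathfrak b_i\widetilde{\mathfrak h}_i\widetilde{\mathfrak b}_{i-1}$ degree by degree, and identify the leftover alternating product of determinants with $\tau(\mathcal H)$ --- is the right shape of Milnor's argument.

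There is, however, a concrete error in the sketch. You claim that the image $\bar B_i$ of $B_i$ in $C''_i$ is ``generally not equal to $B''_i$,'' and that this discrepancy is what $\tau(\mathcal H)$ measures. That is false: since $C_{i+1}\to C''_{i+1}$ is surjective, the image of $B_i=\partial(C_{i+1})$ under $C_i\to C''_i$ equals $\partial''(C''_{i+1})=B''_i$, so $\bar B_i=B''_i$ always. Consequently the sequence $0\to B'_i\to B_i\to \bar B_i\to 0$ you write down need not be exact at $B_i$: the kernel of $B_i\to B''_i$ is $B_i\cap C'_i$, which contains $B'_i=\partial'(C'_{i+1})$ but may be strictly larger (precisely when the connecting homomorphism $H_{i+1}(C'')\to H_i(C')$ is nonzero). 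This has a practical consequence: taking $\mathfrak b'_i$ together with a lift of a basis of $\bar B_i$ does \emph{not} in general yield a basis of $B_i$, so the ``convenient system of choices'' you propose to compute with is not actually available. The discrepancy $\tau(\mathcal H)$ records lives at $B'_i\subseteq B_i\cap C'_i$ (and dually at the possible non-surjectivity of $Z_i\to Z''_i$), not at $\bar B_i$ versus $B''_i$, and the bookkeeping must be anchored there for the cross-terms to assemble into $\tau(\mathcal H)$.
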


\subsection{Torsion of a pair}\label{Twisted homology}

Let $M$ be a smooth compact oriented manifold of arbitrary dimension and let $H_1(M)$ be its first integer homology group. 
To recover the definitions of Section \ref{Chain complexes} we need a cellularization of $M$. The existence of a cellularization is granted by the following classical result:

\begin{teo}[Whitehead]\label{whitehead}
Every compact smooth manifold $M$ admits a canonical PL-structure (in particular, $M$ admits a cellularization, unique up to subdivisions).
\end{teo}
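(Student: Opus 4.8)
The statement to prove is the Whitehead theorem, that every compact smooth manifold admits a canonical PL-structure, hence a cellularization unique up to subdivisions.

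\medskip

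The plan is to cite the classical result rather than reprove it from scratch, since a full proof of Whitehead's triangulation theorem would be far outside the scope of this paper. Specifically, I would outline the following approach. First, I would invoke Whitehead's original work on triangulating smooth manifolds: given a smooth manifold $M$, one uses a smooth embedding of $M$ into some Euclidean space $\mathbb{R}^N$ (which exists by the Whitney embedding theorem), chooses a sufficiently fine simplicial subdivision of a neighborhood of $M$, and shows that $M$ is transverse in an appropriate sense to the skeleta, so that the induced cell decomposition on $M$ can be straightened to a smooth triangulation. The key technical input is that the triangulation can be taken to be a \emph{smooth} triangulation, meaning each simplex is smoothly embedded and the whole structure is compatible with the differentiable atlas.

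\medskip

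Second, for the uniqueness (canonicity) statement, I would appeal to the fact that any two smooth triangulations of the same smooth manifold have a common subdivision, which is also due to Whitehead. This is what allows one to speak of \emph{the} PL-structure associated to a smooth structure: it is the equivalence class of smooth triangulations under the relation of having a common (smooth) subdivision. Passing from a triangulation to a cellularization is immediate, and the uniqueness up to subdivisions is inherited from the uniqueness of the PL-structure. For the purposes of this paper, the only thing actually needed downstream is that a cellularization exists and that any two become equivalent after subdivision, which is exactly what the previously-stated Proposition on subdivisions (the canonical $H_1(M)$-isomorphism between $\mathfrak{Eul}^c(M,\mathcal{P})_{\mathcal{C}}$ and $\mathfrak{Eul}^c(M,\mathcal{P})_{\mathcal{C}'}$) relies on.

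\medskip

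The main obstacle — if one insisted on a self-contained proof — would be the transversality/straightening argument that upgrades the naive cell structure induced by a Euclidean triangulation into a genuine smooth triangulation of $M$ (handling the corners where $M$ meets lower-dimensional skeleta, and in the manifold-with-boundary case, the boundary). Since this is standard and well-documented in the literature, I would not carry it out; instead the proof consists of a precise citation to \cite{whitney} (or to Whitehead's triangulation papers and Munkres' treatment of elementary differential topology), together with the remark that for manifolds of dimension $\leq 3$ the even stronger Hauptvermutung holds, so no ambiguity in the PL-structure arises at all.
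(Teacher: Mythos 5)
Your approach matches the paper exactly: the paper states Theorem~\ref{whitehead} as a classical result and gives no proof, treating it as background to be cited rather than rederived, which is precisely what you propose. One small caveat: in this paper \cite{whitney} refers to Whitney's 1955 paper on singularities of planar mappings (used for the local models of generic vector fields in Section~\ref{Generic vector fields}), not to the triangulation literature, so your parenthetical alternatives (Whitehead's own triangulation papers, or Munkres) are the citations you would actually want here.
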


Let $N$ be a compact submanifold of $M$. Consider a cellularization $\mathcal{C}$ of $M$ such that $N$ is a closed subcomplex of $M$.
Assume that $M$ is connected. If $\hat M\rightarrow M$ is the maximal abelian covering, $\mathcal{C}$ lifts to a cellularization of $\hat M$. Notice that $p^{-1}(N)$ is a closed subcomplex of $\hat M$, hence we can consider the cellular chain complex
$$
C_*(M,N)=C_*^{\mathrm{cell}}(\hat M, p^{-1}(N);\mathbb{Z})
$$
$H_1(M)$ acts on $C_*(M,N)$ via the deck transformations, thus $C_*(M,N)$ can be viewed as a chain complex of $\mathbb{Z}[H_1(M)]$-modules and $\mathbb{Z}[H_1(M)]$-homomorphisms.

Now, consider a field $\mathbb{F}$ and a \emph{representation} $\varphi$, i.e., a ring homomorphism $\varphi:\mathbb{Z}[H_1(M)]\rightarrow\mathbb{F}$. The field $\mathbb{F}$ can be viewed as a $\mathbb{Z}[H_1(M)]$-module with the product $z\cdot f=f\varphi(z)$ (where $z\in\mathbb{Z}[H_1(M)], f\in\mathbb{F}$).
Therefore we can consider the following chain complex over $\mathbb{F}$:

\begin{equation}
C_*^{\varphi}(M,N)=C_*(M,N)\otimes_{\mathbb{Z}[H_1(M)]}\mathbb{F}.
\end{equation}
$C_*^{\varphi}(M,N)$ is called \emph{$\varphi$-twisted chain complex} of $(M,N)$.
Its homology (the \emph{$\varphi$-twisted homology}) is denoted by $H_*^{\varphi}(M,N)$.

A \emph{fundamental family} $\mathfrak{f}$ of $(M,N)$ is a choice of a lift for each cell in $M\setminus N$. $\mathfrak{f}$ is a basis of $C_*^{\varphi}(M,N)$, thus, chosen a basis $\mathfrak{h}$ on $H_*^{\varphi}(M,N)$, we can compute the torsion of the twisted complex $C_*^{\varphi}(M,N)$. 

The \emph{Reidemeister torsion} of $(M,N)$ with respect to $\mathfrak{f},\mathfrak{h}$ is defined by
\begin{equation}\label{RTofthepair}
\tau^{\varphi}(M,N;\mathfrak{f},\mathfrak{h})=\tau(C_*^{\varphi}(M,N);\mathfrak{f},\mathfrak{h})\in\mathbb{F}^*.
\end{equation}
The fact that the definition of $\tau^{\varphi}(M,N;\mathfrak{f},\mathfrak{h})$ does not depends on the choice of the cellularization $\mathcal{C}$ is classical (see \cite[Lemma~3.2.3]{turaev}).

Definitions above extend in a natural way to the case of a non-connected manifold $M$. Namely, the twisted chain complex extends by direct sum on the connected components and Reidemeister torsion extends by multiplicativity.

\subsection{Torsion of a 3-manifold}\label{Torsion of a 3-Manifold}

Now we specialize on dimension 3. Consider a 3-manifold $M$, a boundary pattern $\mathcal{P}=(W,B,V,C,Q^+,Q^-)$
and a cellularization $\mathcal{C}$ of $M$ suited with $\mathcal{P}$. If $\hat M\rightarrow M$ is the maximal abelian covering, $\mathcal{C}$ lifts to a cellularization of $\hat M$. Assume that $M$ is connected (as in Section~\ref{Twisted homology}, the definitions below will extend to the non-connected case in the obvious way).

\begin{ntz}
Consider a submanifold $N$ of $M$, which is also a subcomplex with respect to the cellularization $\mathcal{C}$ (for instance, this happens if $N=\overline{W},\overline{B},\overline{V},\overline{C},Q^+,Q^-$).
Given a representation $\varphi:\mathbb{Z}[H_1(M)]\rightarrow\mathbb{F}$, we can compose it with the map $i_*:\mathbb{Z}[H_1(N)]\rightarrow \mathbb{Z}[H_1(M)]$ induced by the inclusion $i:N\hookrightarrow M$. This gives a representation on $N$, that we will still denote by $\varphi$, with a slight abuse of notation.
\end{ntz}

Consider a field $\mathbb{F}$ and a representation $\varphi:\mathbb{Z}[H_1(M)]\rightarrow\mathbb{F}$.
The \emph{$\varphi$-twisted chain complex} of $M$ relative to $\mathcal{P}$ is the chain complex over $\mathbb{F}$ defined by
\begin{equation}
C_*^{\varphi}(M,\mathcal{P})=C_*^{\varphi}\big(M,\overline{W}\big)\oplus C_*^{\varphi}\big(\overline{C},Q^+\big).
\end{equation}
Its homology is called \emph{$\varphi$-twisted homology} and it is denoted by $H_*^{\varphi}(M,\mathcal{P})$.
A basis of $H_*^{\varphi}(M,\mathcal{P})=H_*^{\varphi}(M,\overline{W})\oplus H_*^{\varphi}(\overline{C},Q^+)$ is a pair $(\mathfrak{h}',\mathfrak{h}'')$, where $\mathfrak{h}'$ is a basis of $H_*^{\varphi}(M,\overline{W})$ and $\mathfrak{h}''$ is a basis of $H_*^{\varphi}(\overline{C},Q^+)$

A \emph{fundamental family} $\mathfrak{f}$ of $(M,\mathcal{P})$ is a pair $(\mathfrak{f}',\mathfrak{f}'')$, where $\mathfrak{f}'$ is a fundamental family of the pair $(M,\overline{W})$ and $\mathfrak{f}''$ is a fundamental family of the pair $(\overline{C},Q^+)$.

$\mathfrak{f}=(\mathfrak{f}',\mathfrak{f}'')$ induces a combinatorial Euler structure on $M$ relative to $\mathcal{P}$ as follows.
Lifting the inclusion $\overline{C}\hookrightarrow M$, one obtains a map $\iota:\hat C\rightarrow\hat M$ (here we have denoted by $\hat C$ the maximal abelian covering of $\overline{C}$), equivariant with respect to the inclusion homomorphism $H_1(\overline{C})\rightarrow H_1(M)$. 
Take a point $x_0\in \hat M$ and a point $x_{\sigma}$ inside each cell $\sigma\in\mathfrak{f}'\cup \iota(\mathfrak{f}'')$. Choose paths $\beta_{\sigma}$ from $x_0$ to $x_{\sigma}$ and consider the 1-chain
$$\epsilon=\sum_{\sigma\in\mathfrak{f}} (-1)^{\dim (\sigma)}\beta_{\sigma}.$$
The projection of $\epsilon$ on $M$ is an Euler chain, thus it represents an Euler structure $\mathfrak{e}\in\mathfrak{Eul}^c(M,\mathcal{P})$.

Given an Euler structure $\mathfrak{e}\in\mathfrak{Eul}^c(M,\mathcal{P})$ and a basis $\mathfrak{h}$ of $H_*^{\varphi}(M,\mathcal{P})$, the \emph{Reidemeister torsion} of $M$ relative to $\mathcal{P}$ is
$$
\tau^{\varphi}(M,\mathcal{P};\mathfrak{e},\mathfrak{h})=\tau(C_*^{\varphi}(M,\mathcal{P});\mathfrak{f},\mathfrak{h})\in\mathbb{F}^*/\{\pm 1\}
$$
where $\mathfrak{f}$ is a fundamental family of $(M,\mathcal{P})$ that induces the Euler structure $\mathfrak{e}$.

\begin{prp}\label{Reid Tors}
$\tau^{\varphi}(M,\mathcal{P};\mathfrak{e},\mathfrak{h})$ is well defined. Namely, it does not depend on the choice of the fundamental family and of the cellularization.
Moreover:
\begin{equation}\label{formulina}
\tau^{\varphi}(M,\mathcal{P};\mathfrak{e}',\mathfrak{h})=\varphi\left(\alpha(\mathfrak{e},\mathfrak{e}')\right)\cdot\tau^{\varphi}(M,\mathcal{P};\mathfrak{e},\mathfrak{h})
\end{equation}
\end{prp}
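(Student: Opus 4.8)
The plan is to split the assertion into two independences — from the fundamental family (with the cellularization fixed) and from the cellularization — and to extract formula~\eqref{formulina} from the first of these. Throughout one keeps in mind that $\tau$ is only defined up to sign, since the cell bases entering~\eqref{dfn:torsion} have no canonical ordering; all equalities below are meant in $\mathbb{F}^*/\{\pm 1\}$.

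Fix a cellularization $\mathcal{C}$ suited with $\mathcal{P}$ and let $\mathfrak{f}$, $\bar{\mathfrak{f}}$ be two fundamental families of $(M,\mathcal{P})$, inducing Euler structures $\mathfrak{e}$ and $\mathfrak{e}'$. Two lifts of the same cell differ by a unique deck transformation, so there are classes $g_\sigma\in H_1(M)$ — lying in the image of $H_1(\overline C)\to H_1(M)$ for the cells of the $(\overline C,Q^+)$-factor — with $\bar{\mathfrak{f}}_\sigma=g_\sigma\cdot\mathfrak{f}_\sigma$. After $\otimes_{\mathbb{Z}[H_1(M)]}\mathbb{F}$ the corresponding basis vector of $C_*^\varphi(M,\mathcal{P})$ is rescaled by $\varphi(g_\sigma)$, so the change of basis $\mathfrak{f}\to\bar{\mathfrak{f}}$ is diagonal on each $C_i^\varphi(M,\mathcal{P})$ with determinant $\prod_{\dim\sigma=i}\varphi(g_\sigma)^{-1}$. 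Since~\eqref{dfn:torsion} transforms by $\prod_i[\mathfrak{c}_i/\bar{\mathfrak{c}}_i]^{(-1)^{i+1}}$ under a change of the cell basis, one obtains
\[
\tau\big(C_*^\varphi(M,\mathcal{P});\bar{\mathfrak{f}},\mathfrak{h}\big)=\Big(\prod_\sigma\varphi(g_\sigma)^{(-1)^{\dim\sigma}}\Big)\,\tau\big(C_*^\varphi(M,\mathcal{P});\mathfrak{f},\mathfrak{h}\big)=\varphi\Big(\textstyle\sum_\sigma(-1)^{\dim\sigma}g_\sigma\Big)\,\tau\big(C_*^\varphi(M,\mathcal{P});\mathfrak{f},\mathfrak{h}\big).
\]
Comparing the Euler chains of $\mathfrak{f}$ and $\bar{\mathfrak{f}}$ — choosing inside $g_\sigma\cdot\mathfrak{f}_\sigma$ the point $g_\sigma$ applied to the one chosen inside $\mathfrak{f}_\sigma$, and joining the two by a path $\gamma_\sigma$ whose projection to $M$ is a loop in the class $g_\sigma$ — identifies $\sum_\sigma(-1)^{\dim\sigma}g_\sigma$ with $\alpha(\mathfrak{e},\mathfrak{e}')$ (up to the signs built into the conventions for $\alpha^c$ and for the identification of deck transformations with $H_1(M)$). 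Hence the prefactor above equals $\varphi(\alpha(\mathfrak{e},\mathfrak{e}'))$, which is exactly~\eqref{formulina}; specializing to $\mathfrak{e}=\mathfrak{e}'$ gives prefactor $\varphi(0)=1$ and thus independence of the fundamental family used to compute $\tau^\varphi(M,\mathcal{P};\mathfrak{e},\mathfrak{h})$.

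For independence of the cellularization, by Theorem~\ref{whitehead} any two cellularizations suited with $\mathcal{P}$ admit a common subdivision (which a further subdivision keeps suited), so it suffices to compare $\mathcal{C}$ with a suited subdivision $\mathcal{C}'$. Let $\mathfrak{f}'$ be the fundamental family of $\mathcal{C}'$ obtained from $\mathfrak{f}$ by taking the unique lifts of the new cells inside the chosen lifts of the old ones. Then $\tau\big(C_*^\varphi(M,\overline W);\mathfrak{f}',\mathfrak{h}'\big)=\tau\big(C_*^\varphi(M,\overline W);\mathfrak{f},\mathfrak{h}'\big)$, and likewise for the $(\overline C,Q^+)$-factor: this is the classical subdivision-invariance of Reidemeister torsion (\cite[Lemma~3.2.3]{turaev}), obtained by writing a subdivision as a composition of elementary ones and applying Milnor's Theorem~\ref{milnor} to the short exact sequences whose quotient is the based acyclic complex of a subdivided cell relative to its boundary (torsion $\pm 1$). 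It then remains to check that $\mathfrak{f}'$ induces $\mathfrak{e}$ under the canonical isomorphism $\mathfrak{Eul}^c(M,\mathcal{P})_{\mathcal{C}}\cong\mathfrak{Eul}^c(M,\mathcal{P})_{\mathcal{C}'}$, which holds because the corrective paths $\alpha_e$ in the definition of homologous Euler chains can be taken inside single cells and so are null-homologous in $M$.

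The main obstacle is this last step: essentially all the content sits in following the Euler structure through a subdivision — confining the corrective paths to cells and verifying that the auxiliary subdivision complexes have trivial torsion with the chosen bases — everything else being a bookkeeping computation with determinants. The boundary pattern $\mathcal{P}$ enters only by prescribing which subcomplexes ($\overline W$, and $(\overline C,Q^+)$) one takes relative chains of, and these are preserved by suited subdivisions, so no difficulty beyond Turaev's original setting arises.
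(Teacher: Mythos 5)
Your proof is correct and follows essentially the same route as the paper's: the key step in both is that a change of fundamental family rescales the twisted cell basis by deck-transformation classes $g_\sigma$ (lying in the image of $H_1(\overline C)\to H_1(M)$ for the $(\overline C,Q^+)$-factor), producing the prefactor $\prod_\sigma\varphi(g_\sigma)^{(-1)^{\dim\sigma}}$, while cellularization-independence is reduced to the classical subdivision invariance of torsion of a pair. The only organizational differences are that you derive \eqref{formulina} and fundamental-family-independence from one unified computation, whereas the paper proves \eqref{formulina} separately by changing a single lift $\beta_\sigma$ at a time, and you make explicit the step (left implicit in the paper) that the canonical isomorphism $\mathfrak{Eul}^c(M,\mathcal{P})_{\mathcal{C}}\cong\mathfrak{Eul}^c(M,\mathcal{P})_{\mathcal{C}'}$ carries the $\mathfrak{f}$-induced structure to the $\mathfrak{f}'$-induced one.
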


\begin{proof}
The indipendence on the cellularization is a consequence of the independence on the cellularization of the Reidemeister torsion of the pair defined in Section~\ref{Twisted homology}. 
Formula \eqref{formulina} is easily proved by choosing representatives $\sum (-1)^{\dim\sigma}\beta_{\sigma}$, $\sum (-1)^{\dim\sigma}\beta'_{\sigma}$ of $\mathfrak{e},\mathfrak{e}'$ such that $\beta_{\sigma}'=\beta_{\sigma}$ for all $\sigma$ but one.

It remains to prove that $\tau^{\varphi}$ does not depend on the choice of the fundamental family. To this end, consider two fundamental families
$$
\mathfrak{f}_1=(\mathfrak{f}_1',\mathfrak{f}_1'')=(\{\sigma_1,\dots,\sigma_r\},\{\sigma_{r+1},\dots,\sigma_s\})
$$
$$
\mathfrak{f}_2=(\mathfrak{f}_2',\mathfrak{f}_2'')=(\{\tilde\sigma_1,\dots,\tilde\sigma_r\},\{\tilde\sigma_{r+1},\dots,\tilde\sigma_s\})
$$
inducing the same Euler structure $\mathfrak{e}$.
Suppose that $\mathfrak{f}_1$ and $\mathfrak{f}_2$ are ordered in such a way that $p(\sigma_j)=p(\tilde\sigma_j)\ \forall j=1,\dots,s$ (here we have denoted by the same letter the covering maps $p:\hat M\rightarrow M$ and $p:\hat C\rightarrow C$).

Hence, we can write $\tilde\sigma_j=h_j\sigma_j$, where $h_j\in H_1(M)$ for $j=1,\dots,r$, $h_j\in H_1(\overline{C})$ for $j=r+1,\dots, s$. Recall the inclusion morphism $i_*:H_1(M)\rightarrow H_1(\overline{C})$.
Because $\mathfrak{f}_1$ and $\mathfrak{f}_2$ induce the same Euler structure, we have:
$$
\prod_{j=1}^r h_j^{(-1)^{\dim\sigma_j}}\cdot\prod_{j=r+1}^s i_*(h_j)^{(-1)^{\dim\sigma_j}}=1\in H_1(M)
$$
Now, the result follows from the following easy computation (recall that $\mathfrak{h}=(\mathfrak{h}',\mathfrak{h}'')\in H_*^{\varphi}(M,\overline{W})\oplus H_*^{\varphi}(\overline{C},Q^+)$):
$$
\tau(C_*^{\varphi}(M,\mathcal{P});\mathfrak{f}_1,\mathfrak{h})=
$$
$$
=\tau\left(C_*^{\varphi}(M,\overline{W});\mathfrak{f}_1',\mathfrak{h}'\right)\cdot \tau\left(C_*^{\varphi}(\overline{C},Q^+);\mathfrak{f}_1'',\mathfrak{h}''\right)=
$$
\begin{equation}\notag
\begin{split}
=\varphi\Big(\prod_{j=1}^r h_j^{(-1)^{\dim \sigma_j}}\Big)\cdot \varphi\Big( i_*\Big(\prod_{j=r+1}^s h_j^{(-1)^{\dim \sigma_j}}\Big)\Big)\cdot \tau\left(C_*^{\varphi}(M,\overline{W});\mathfrak{f}_2',\mathfrak{h}'\right)\cdot&
\\
\cdot\tau\left(C_*^{\varphi}(\overline{C},Q^+);\mathfrak{f}_2'',\mathfrak{h}''\right)=
\end{split}
\end{equation}
$$
=\varphi\Big(\prod_{j=1}^r h_j^{(-1)^{\dim\sigma_j}}\cdot\prod_{j=r+1}^s i_*(h_j)^{(-1)^{\dim\sigma_j}}\Big)\cdot\tau(C_*^{\varphi}(M,\mathcal{P});\mathfrak{f}_2,\mathfrak{h})=
$$
$$
=\tau(C_*^{\varphi}(M,\mathcal{P});\mathfrak{f}_2,\mathfrak{h})\qedhere
$$
\end{proof}

\begin{nta}\label{homology orientation}
The definition of Reidemeister torsion above shows an indeterminacy in the sign, due to the arbitrariness in the choice of an order and an orientation of the fundamental family. A refinement of torsion exists: by means of an \emph{homology orientation}, one can rule out the sign indeterminacy (see \cite[\S~18]{turaev2}). We will not consider homology orientation in our work, for it will complicate much more than expected the discussion and results of Section \ref{sctn3}.
\end{nta}

\begin{nta}
For a boundary pattern $\mathcal{P}=(W,B,V,C,\emptyset,\emptyset)$, one can define an $H_1(M)$-equivariant bijection $\Theta:\mathfrak{Eul}(M,\mathcal{P})\rightarrow\mathfrak{Eul}(M,\theta(\mathcal{P}))$, where $\theta(\mathcal{P})=(W,B,V\cup C,\emptyset,\emptyset,\emptyset)$ (see \cite[\S~1.2]{benedetti}). In general, it does not exist a basis $\mathfrak{h}'$ of $H_*^{\varphi}(\overline{C})$ such that:
$$
\tau^{\varphi}(M,\mathcal{P};\mathfrak{e},\mathfrak{h}\cup\mathfrak{h}')=\tau^{\varphi}(M,\theta(\mathcal{P}),\Theta(\mathfrak{e}),\mathfrak{h}).
$$
Thus, our definition of Reidemeister torsion is not coherent with the one given in \cite[\S~2]{benedetti} (in the case of mixed concave and convex tangency circles).
\end{nta}

\begin{ntz}
If $M$ is closed, then the only boundary pattern is the trivial $\mathcal{P}_0=(\emptyset,\emptyset,\emptyset,\emptyset,\emptyset,\emptyset)$, thus we will avoid to specify it (for instance, we will write $C_*^{\varphi}(M)$ instead of $C_*^{\varphi}(M,\mathcal{P}_0)$). Notice that $\chi(M)=0$ (from Poincar\'e duality), hence propositions \ref{non-emptyness2} and \ref{non-emptyness} are automatically satisfied, i.e., the set of Euler structures $\mathfrak{Eul}(M)$ is not empty.
\end{ntz}

\section{Gluings}\label{sctn3}

We show how to naturally define gluings of Euler structures, and we develop a multiplicative gluing formula for Reidemeister torsion (Theorem~\ref{thm.gluing}).

\subsection{Gluing of Euler structures}

Let $M$ be a 3-manifold and $S\subset M$ an embedded surface, that divides $M$ into two smooth submanifolds $M_1,M_2$. Assume that $M$ is closed (but the extension to $\partial M\neq\emptyset$ is straightforward).

Consider an Euler chain $\xi_1$ on $M_1$, relative to a partition $\mathcal{P}=(W,B,V,$ $C,Q^+,Q^-)$ on $\partial M_1=S$. $\xi_1$ represents an Euler structure $\mathfrak{e}_1\in\mathfrak{Eul}^c(M,\mathcal{P})$.
Denote by $\mathcal{P}'$ the partition $(B,W,C,V,Q^-,Q^+)$ (namely, we swap black and white part, convex and concave lines, positive and negative cuspidal points). $\mathcal{P}$ and $\mathcal{P}'$ are said to be \emph{dual}.
Consider an Euler structure $\mathfrak{e}_2\in\mathfrak{Eul}^c(M,\mathcal{P}')$, represented by an Euler chain $\xi_2$. It is clear that $\xi=\xi_1+\xi_2$ is an Euler chain on $M$. Denote by $\mathfrak{e}_1\cup\mathfrak{e}_2\in\mathfrak{Eul}^c(M)$ the Euler structure represented by $\xi$. 
We have defined a \emph{gluing map}:
\begin{equation}\label{gluing1}
(\mathfrak{e}_1,\mathfrak{e}_2)\mapsto\mathfrak{e}_1\cup\mathfrak{e}_2:\mathfrak{Eul}^c(M_1,\mathcal{P})\times\mathfrak{Eul}^c(M_2,\mathcal{P}')\rightarrow\mathfrak{Eul}^c(M).
\end{equation}

It is easy to obtain a differentiable version of the gluing map. Consider Euler structures $\mathfrak{e}_1\in\mathfrak{Eul}^s(M_1,\mathcal{P})$, $\mathfrak{e}_2\in\mathfrak{Eul}^s(M_2,\mathcal{P}')$ represented by generic fields $\mathfrak{v}_1,\mathfrak{v}_2$ respectively. Up to homotopy, we can suppose that $\mathfrak{v}_1$ and $\mathfrak{v}_2$ coincide on $S$. Then $\mathfrak{v}_1$ and $\mathfrak{v}_2$ can be glued together (in a smooth way), giving a non-singular vector field $\mathfrak{v}$ on $M$. Again, denote by $\mathfrak{e}_1\cup\mathfrak{e}_2\in\mathfrak{Eul}^s(M)$ the Euler structure represented by $\mathfrak{v}$. Now we can define the differentiable analogous of map \eqref{gluing1}:
\begin{equation}\label{gluing2}
(\mathfrak{e}_1,\mathfrak{e}_2)\mapsto\mathfrak{e}_1\cup\mathfrak{e}_2:\mathfrak{Eul}^s(M_1,\mathcal{P})\times\mathfrak{Eul}^s(M_2,\mathcal{P}')\rightarrow\mathfrak{Eul}^s(M).
\end{equation}
The following lemma can be deduced directly from definitions:

\begin{lmm}
The following diagram is commutative
$$
\begin{CD}
\mathfrak{Eul}^c(M_1,\mathcal{P})\times\mathfrak{Eul}^c(M_2,\mathcal{P}')    @>{\cup}>>  \mathfrak{Eul}^c(M)\\
@VV{\Psi_1\times\Psi_2}V      @VV{\Psi}V\\
\mathfrak{Eul}^s(M_1,\mathcal{P})\times\mathfrak{Eul}^s(M_2,\mathcal{P}')    @>{\cup}>>  \mathfrak{Eul}^s(M)
\end{CD}
$$
\end{lmm}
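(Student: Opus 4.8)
The plan is to chase the two construction recipes around the square and observe that they literally produce the same $1$-chain. Concretely, fix a cellularization $\mathcal{C}$ of $M$ that is suited to both $\mathcal{P}$ on $M_1$ and $\mathcal{P}'$ on $M_2$, and such that $S$ is a subcomplex; restricting $\mathcal{C}$ gives cellularizations $\mathcal{C}_1$ of $M_1$ and $\mathcal{C}_2$ of $M_2$. The first thing I would check is that, under the identifications of $\partial M_1$ and $\partial M_2$ with $S$, the sets $E_{\mathcal{C}_1}$ and $E_{\mathcal{C}_2}$ are complementary inside the set of all cells of $\mathcal{C}$: since $\mathcal{P}'=(B,W,C,V,Q^-,Q^+)$ is obtained from $\mathcal{P}$ by swapping white$\leftrightarrow$black, convex$\leftrightarrow$concave, $Q^+\leftrightarrow Q^-$, the cells of $M_1\setminus(W\cup V\cup Q^+)$ together with the cells of $M_2\setminus(B\cup C\cup Q^-)$ account, without overlap and without omission, for all cells of $\mathcal{C}$ except those on $S$ that get counted from exactly one side. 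Verifying this bookkeeping — that $E_{\mathcal{C}_1}\sqcup E_{\mathcal{C}_2}$ equals the full cell set of $\mathcal{C}=E_{\mathcal{C}}$ with the correct signs $(-1)^{\dim e}$ — is the combinatorial heart of the argument, and it is exactly the point where the duality of $\mathcal{P}$ and $\mathcal{P}'$ is used.

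Granting that, the combinatorial square is immediate: if $\xi_i$ is an Euler chain for $\mathfrak{e}_i$ with $\partial\xi_i=\sum_{e\subset E_{\mathcal{C}_i}}(-1)^{\dim e}x_e$, then $\partial(\xi_1+\xi_2)=\sum_{e\subset E_{\mathcal{C}}}(-1)^{\dim e}x_e$, so $\xi_1+\xi_2$ is a legitimate Euler chain on $M$ representing $\mathfrak{e}_1\cup\mathfrak{e}_2$ by definition. On the smooth side I would argue directly with the reconstruction recipe in the proof of Theorem~\ref{reconstruction map}: choose representatives $\xi_i$ of $\Psi_i^{-1}(\mathfrak{e}_i)$ whose boundary consists of the singularities $p_\sigma$ of the fundamental field $\mathfrak{w}_{\mathcal{C}_i'}$, and note that the collar construction $M_i\rightsquigarrow (M_i)_{h_i}$ together with the local model of $\mathfrak{w}$ on $U_Q$ is symmetric under the duality $\mathcal{P}\leftrightarrow\mathcal{P}'$, so the two fields $\mathfrak{w}_{\mathcal{C}_1'}$ and $\mathfrak{w}_{\mathcal{C}_2'}$ glue along $S$ to the fundamental field $\mathfrak{w}_{\mathcal{C}'}$ of $M$ (the white/black, convex/concave matching is precisely what makes the glued field transverse-with-generic-tangencies, hence non-singular and compatible with $\mathcal{P}_0$). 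Removing the singularities of $\mathfrak{w}_{\mathcal{C}'}$ along the support of $\xi_1+\xi_2$ is a modification supported in a neighborhood of that support, and it can be performed as the union of the modifications along $\xi_1$ and along $\xi_2$; the resulting field is $\mathfrak{v}_1\cup\mathfrak{v}_2$ up to homotopy, which represents $\mathfrak{e}_1\cup\mathfrak{e}_2$ in $\mathfrak{Eul}^s(M)$.

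Putting the pieces together: starting from $(\mathfrak{e}_1^c,\mathfrak{e}_2^c)$ and going right-then-down gives the smooth Euler structure $\Psi(\mathfrak{e}_1^c\cup\mathfrak{e}_2^c)$, computed from $\mathfrak{w}_{\mathcal{C}'}^{\xi_1+\xi_2}$; going down-then-right gives $\Psi_1(\mathfrak{e}_1^c)\cup\Psi_2(\mathfrak{e}_2^c)$, computed from $\mathfrak{w}_{\mathcal{C}_1'}^{\xi_1}\cup\mathfrak{w}_{\mathcal{C}_2'}^{\xi_2}$, and the previous paragraph identifies these two fields up to homotopy rel nothing, hence up to homology. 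I expect the main obstacle to be the careful matching of the collar/tangency data along $S$ under the duality — in particular checking that convex tangency circles on the $M_1$-side meet concave tangency circles on the $M_2$-side so that the glued field is genuinely smooth and the boundary pattern disappears — rather than any of the homological bookkeeping, which is essentially formal once the cell decomposition of $S$ is treated symmetrically. (One should also remark, as the authors surely intend, that all choices of intermediate cellularization are related by the canonical $H_1$-isomorphisms of the earlier propositions, so the conclusion is independent of them.)
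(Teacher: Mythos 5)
The paper does not actually give a proof of this lemma --- it is simply asserted to ``be deduced directly from definitions'' --- so there is nothing in the paper to compare against directly; your argument is a reasonable and essentially correct unpacking of that assertion. The combinatorial half is exactly right and is the heart of the matter: since $\mathcal{P}'$ is dual to $\mathcal{P}$, the sets $E_{\mathcal{C}_1}$ and $E_{\mathcal{C}_2}$ partition the full cell set $E_{\mathcal{C}}$ of the closed manifold $M$ (the cells of $S$ lying in $B\cup C\cup Q^-$ are counted on the $M_1$ side and those in $W\cup V\cup Q^+$ on the $M_2$ side), so $\xi_1+\xi_2$ is a legitimate Euler chain for $M$ and represents $\mathfrak{e}_1\cup\mathfrak{e}_2$. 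On the smooth side, the one place you should be slightly more careful is the claim that $\mathfrak{w}_{\mathcal{C}_1'}$ and $\mathfrak{w}_{\mathcal{C}_2'}$ ``glue along $S$ to the fundamental field of $M$'': taken literally these fields are not compared along $S$ but along the pushed-off boundaries $\partial M_{1,h_1}$ and $\partial M_{2,h_2}$, which live in the abstract collars. What makes the argument work is precisely the duality of the height functions --- $h_1$ pushes the boundary into $M_1$ over $W$ and out into the collar over $B$, while $h_2$ does the opposite because $W$ is the black part of $\mathcal{P}'$ --- so that after identifying the abstract collar of each $M_i$ with a tubular neighborhood of $S$ in the other piece, the two pushed-off boundaries coincide, and both restricted fundamental fields agree with $\mathfrak{w}_{\mathcal{C}}$ there. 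Once that identification is made explicit, the desingularization of $\mathfrak{w}_{\mathcal{C}}$ along $\xi_1+\xi_2$ visibly splits as the union of the two one-sided desingularizations, and the square commutes; this is, I believe, the content the paper intends but does not spell out.
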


\subsection{Setting}\label{Setting}

Again, let $M$ be a closed 3-manifold and $S\subset M$ an embedded surface, that splits $M$ into two smooth submanifolds $M_1,M_2$.

Let $\mathfrak{v}$ be a non-singular vector field on $M$ representing the Euler structure $\mathfrak{e}\in\mathfrak{Eul}^s(M)$. Consider the restrictions $\mathfrak{v}_1=\mathfrak{v}|_{M_1}$, $\mathfrak{v}_2=\mathfrak{v}|_{M_2}$. Up to a small modification of $S$ or $\mathfrak{v}$, we can suppose that $\mathfrak{v}_1$ (then also $\mathfrak{v}_2$) is generic. Let $\mathcal{P}=(W,B,V,C,Q^+,Q^-)$ be the partition induced by $\mathfrak{v}_1$ on $\partial M_1=S$. Then it is easy to check that $\mathfrak{v}_2$ induces on $S$ the dual partition $\mathcal{P}'=(B,W,C,V,Q^-,Q^+)$. Thus, $\mathfrak{v}_1$ (resp. $\mathfrak{v}_2$) represents an Euler structure $\mathfrak{e}_1\in\mathfrak{Eul}^s(M_1,\mathcal{P})$ (resp. $\mathfrak{e}_2\in\mathfrak{Eul}^s(M_2,\mathcal{P}')$), and $\mathfrak{e}=\mathfrak{e}_1\cup\mathfrak{e}_2$.

Choose bases $\mathfrak{h},\mathfrak{h}_1,\mathfrak{h}_2$ on the twisted homologies $H_*^{\varphi}(M)$, $H_*^{\varphi}(M_1,\mathcal{P})$, $H_*^{\varphi}(M_2,\mathcal{P}')$ respectively.


Fix a cellularization $\mathcal{C}$ on $M$ suited with $\mathcal{P}$, and choose a representation $\varphi: H_1(M)\rightarrow\mathbb{F}$. We have the following short exact sequences:

\begin{equation}\label{gs1}\tag{a}
0\rightarrow C_*^{\varphi}(Q)\rightarrow C_*^{\varphi}(\overline{V})\oplus C_*^{\varphi} (\overline{C})\rightarrow C_*^{\varphi}(G) \rightarrow 0
\end{equation}
\begin{equation}\label{gs2}\tag{b}
0\rightarrow C_*^{\varphi}(G)\rightarrow C_*^{\varphi} (\overline{W})\oplus C_*^{\varphi} (\overline{B})\rightarrow C_*^{\varphi}(S) \rightarrow 0
\end{equation}
\begin{equation}\label{gs3}\tag{c}
0\rightarrow C_*^{\varphi}(S)\rightarrow C_*^{\varphi} (M_1)\oplus C_*^{\varphi} (M_2)\rightarrow C_*^{\varphi}(M) \rightarrow 0
\end{equation}
\begin{equation}\label{gs4}\tag{d}
0\rightarrow C_*^{\varphi}(Q^+)\rightarrow C_*^{\varphi} (\overline{C})\rightarrow C_*^{\varphi} (\overline{C},Q^+) \rightarrow 0
\end{equation}
\begin{equation}\label{gs5}\tag{e}
0\rightarrow C_*^{\varphi}(Q^-)\rightarrow C_*^{\varphi} (\overline{V})\rightarrow C_*^{\varphi} (\overline{V},Q^-) \rightarrow 0
\end{equation}
\begin{equation}\label{gs6}\tag{f}
0\rightarrow C_*^{\varphi} (\overline{W})\rightarrow C_*^{\varphi} (M_1)\rightarrow C_*^{\varphi} (M_1,\overline{W}) \rightarrow 0
\end{equation}
\begin{equation}\label{gs7}\tag{g}
0\rightarrow C_*^{\varphi} (\overline{B})\rightarrow C_*^{\varphi} (M_2)\rightarrow C_*^{\varphi} (M_2,\overline{B}) \rightarrow 0
\end{equation}

It is clear that all the submanifolds appearing in the exact sequences above are also subcomplexes of $\mathcal{C}$ (because $\mathcal{C}$ is suited with $\mathcal{P}$), thus the twisted complexes are well defined.

Fix bases on the twisted homologies of the complexes above. We have complete freedom in the choice, except for the following requirements:
\begin{itemize}
\item The union of the bases of $H_*^{\varphi}(Q^+), H_*^{\varphi}(Q^-)$ gives the basis on $H_*^{\varphi}(Q)=H_*^{\varphi}(Q^+)\oplus H_*^{\varphi}(Q^-)$;
\item the union of the bases of $H_*^{\varphi}(M_1,\overline{W})$ and $H_*^{\varphi}(\overline{C},Q^+)$ gives the basis $\mathfrak{h}_1$ on $H_*^{\varphi}(M_1,\mathcal{P})$;
\item the union of the bases of $H_*^{\varphi}(M_2,\overline{B})$ and $H_*^{\varphi}(\overline{V},Q^-)$ gives the basis $\mathfrak{h}_2$ on $H_*^{\varphi}(M_2,\mathcal{P}')$;
\item the basis of $H_*^{\varphi}(M)$ is $\mathfrak{h}$;
\end{itemize}

Denote by $\tau_a,\tau_b,\tau_c,\tau_d,\tau_e,\tau_f,\tau_g$ the torsions of the long exact sequences of homologies induced by the short exact sequences \eqref{gs1},\eqref{gs2},\eqref{gs3},\eqref{gs4},\eqref{gs5}, \eqref{gs6},\eqref{gs7} respectively, computed with respect to the chosen bases. 

\subsection{A formula for gluings}\label{A formula for torsions}

\begin{teo}\label{thm.gluing}
In the notations of Section \ref{Setting}, the following gluing formula holds:
$$
\tau^{\varphi}(M;\mathfrak{e},\mathfrak{h})=\mathfrak{T}(\mathfrak{h},\mathfrak{h}_1,\mathfrak{h}_2)\cdot\tau^{\varphi}(M_1,\mathcal{P};\mathfrak{e}_1,\mathfrak{h}_1)\cdot\tau^{\varphi}(M_2,\mathcal{P}';\mathfrak{e}_2,\mathfrak{h}_2)
$$
where
$$
\mathfrak{T}(\mathfrak{h},\mathfrak{h}_1,\mathfrak{h}_2)=(\tau_a)^{-1}\cdot\tau_b\cdot(\tau_c)^{-1}\cdot\tau_d\cdot\tau_e\cdot\tau_f\cdot\tau_g
$$
\end{teo}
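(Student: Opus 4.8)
The plan is to apply Milnor's theorem (Theorem~\ref{milnor}) to each of the seven short exact sequences \eqref{gs1}--\eqref{gs7}, together with multiplicativity of the torsion under direct sums, and then to cancel out all the auxiliary torsions by an elementary algebraic manipulation. The preliminary --- and by far the most delicate --- step is to fix a single \emph{coherent} system of fundamental families. Regarding (via the identification analogous to the lemma sketched in Section~\ref{Torsion of a 3-Manifold}) every twisted complex in sight as a subquotient of $C_*^{\varphi}(M)$ computed inside $\hat M$ with the given representation $\varphi$, such a system amounts to the choice of one lift in $\hat M$ for each cell of $M$; we want it to be compatible, in the sense of Theorem~\ref{milnor}, with all seven sequences at once, and so that the induced fundamental families realize the Euler structures $\mathfrak{e}$ on $M$, $\mathfrak{e}_1$ on $(M_1,\mathcal{P})$ and $\mathfrak{e}_2$ on $(M_2,\mathcal{P}')$, with $\mathfrak{e}=\mathfrak{e}_1\cup\mathfrak{e}_2$. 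By Proposition~\ref{Reid Tors} the three torsions occurring in the statement are insensitive to such choices, so it is enough to exhibit one such system.

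To build it, recall that since $\mathcal{C}$ is suited with $\mathcal{P}$, the open cells of $M$ are partitioned according to whether they lie in $\mathrm{Int}\,M_1$, in $\mathrm{Int}\,M_2$, or in one of $W,B,V,C,Q^{+},Q^{-}$; consequently the cell set $M_1\setminus(W\cup V\cup Q^{+})$ of $(M_1,\overline{W})\oplus(\overline{C},Q^{+})$ and the cell set $M_2\setminus(B\cup C\cup Q^{-})$ of $(M_2,\overline{B})\oplus(\overline{V},Q^{-})$ are complementary subsets whose disjoint union is the whole set of cells of $M$. Hence: choose a fundamental family $\mathfrak{f}_1$ of $(M_1,\mathcal{P})$ inducing $\mathfrak{e}_1$ and a fundamental family $\mathfrak{f}_2$ of $(M_2,\mathcal{P}')$ inducing $\mathfrak{e}_2$; together they assign exactly one lift to each cell of $M$, and transporting these lifts to every subcomplex appearing in \eqref{gs1}--\eqref{gs7} produces, for each of those seven sequences, bases on the sub-, total and quotient complexes which are compatible --- on the nose for the pair sequences \eqref{gs4},\eqref{gs5},\eqref{gs6},\eqref{gs7}, and up to reordering and sign for the Mayer--Vietoris sequences \eqref{gs1},\eqref{gs2},\eqref{gs3}, which is harmless since torsion lives in $\mathbb{F}^*/\{\pm1\}$. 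Finally, by the construction of the combinatorial gluing map, the Euler chain induced on $M$ by the combined family represents $\mathfrak{e}_1\cup\mathfrak{e}_2$, which equals $\mathfrak{e}$ by the choices made in Section~\ref{Setting}; here the vanishing of $\chi(M_1)-\chi(W)-\chi(V)-\chi(Q^{+})$, $\chi(M_2)-\chi(B)-\chi(C)-\chi(Q^{-})$ (Proposition~\ref{non-emptyness}) and $\chi(M)$ is what makes the induced Euler chain independent of the auxiliary basepoints, so this identification is unambiguous.

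With the coherent family fixed, Theorem~\ref{milnor} applied to \eqref{gs1}--\eqref{gs7}, together with multiplicativity of $\tau$ over direct sums (where the compatibility requirements on the homology bases listed in Section~\ref{Setting} enter), yields seven identities relating the torsions of $C_*^{\varphi}(M_1)$, $C_*^{\varphi}(M_2)$, $C_*^{\varphi}(\overline{W})$, $C_*^{\varphi}(\overline{B})$, $C_*^{\varphi}(\overline{V})$, $C_*^{\varphi}(\overline{C})$, $C_*^{\varphi}(G)$, $C_*^{\varphi}(S)$, $C_*^{\varphi}(Q^{+})$, $C_*^{\varphi}(Q^{-})$ to one another, to $\tau_a,\dots,\tau_g$, and to $\tau(C_*^{\varphi}(M))$, $\tau(C_*^{\varphi}(M_1,\overline{W}))$, $\tau(C_*^{\varphi}(M_2,\overline{B}))$, $\tau(C_*^{\varphi}(\overline{C},Q^{+}))$, $\tau(C_*^{\varphi}(\overline{V},Q^{-}))$. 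One combines them as follows: from \eqref{gs6},\eqref{gs7},\eqref{gs3} delete $\tau(C_*^{\varphi}(M_1))$ and $\tau(C_*^{\varphi}(M_2))$; from \eqref{gs2} delete $\tau(C_*^{\varphi}(\overline{W}))\,\tau(C_*^{\varphi}(\overline{B}))$ and cancel the common factor $\tau(C_*^{\varphi}(S))$, reaching $\tau(C_*^{\varphi}(M))=\tau_c^{-1}\tau_b\tau_f\tau_g\cdot\tau(C_*^{\varphi}(G))\cdot\tau(C_*^{\varphi}(M_1,\overline{W}))\,\tau(C_*^{\varphi}(M_2,\overline{B}))$; from \eqref{gs4},\eqref{gs5},\eqref{gs1}, after cancelling $\tau(C_*^{\varphi}(Q^{+}))\,\tau(C_*^{\varphi}(Q^{-}))$, get $\tau(C_*^{\varphi}(G))=\tau_a^{-1}\tau_d\tau_e\cdot\tau(C_*^{\varphi}(\overline{C},Q^{+}))\,\tau(C_*^{\varphi}(\overline{V},Q^{-}))$; and substitute. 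All cancellations are legitimate since the torsion of a complex over $\mathbb{F}$ with a chosen homology basis lies in $\mathbb{F}^*$. Regrouping, and using $C_*^{\varphi}(M_1,\mathcal{P})=C_*^{\varphi}(M_1,\overline{W})\oplus C_*^{\varphi}(\overline{C},Q^{+})$ and $C_*^{\varphi}(M_2,\mathcal{P}')=C_*^{\varphi}(M_2,\overline{B})\oplus C_*^{\varphi}(\overline{V},Q^{-})$ together with the chosen $\mathfrak{f}_1,\mathfrak{f}_2,\mathfrak{h}_1,\mathfrak{h}_2$, one lands precisely on $\tau^{\varphi}(M;\mathfrak{e},\mathfrak{h})=\mathfrak{T}\cdot\tau^{\varphi}(M_1,\mathcal{P};\mathfrak{e}_1,\mathfrak{h}_1)\cdot\tau^{\varphi}(M_2,\mathcal{P}';\mathfrak{e}_2,\mathfrak{h}_2)$ with $\mathfrak{T}=(\tau_a)^{-1}\tau_b(\tau_c)^{-1}\tau_d\tau_e\tau_f\tau_g$.

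The main obstacle is this first step: producing the coherent system of fundamental families and checking both its Milnor-compatibility with all seven sequences and that it realizes the gluing $\mathfrak{e}=\mathfrak{e}_1\cup\mathfrak{e}_2$ at the chain level; once this bookkeeping --- together with the routine identification of the twisted complexes of the subcomplexes with subquotients of $C_*^{\varphi}(M)$ --- is in place, what remains is a direct application of Milnor's theorem followed by linear algebra.
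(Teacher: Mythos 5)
Your plan is the same as the paper's: apply Milnor's theorem (Theorem~\ref{milnor}) to the seven short exact sequences \eqref{gs1}--\eqref{gs7}, with a coherent system of fundamental families, and then cancel the auxiliary torsions algebraically; your final chain of cancellations matches the paper's line for line. You have also correctly identified the crux --- the construction of the compatible fundamental families --- as where all the work lies.

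Where you are too cavalier is precisely there. You assert that $\mathfrak{f}_1$ and $\mathfrak{f}_2$ ``together assign exactly one lift to each cell of $M$'' and that transport then yields Milnor-compatible bases on all seven sequences. But $\mathfrak{f}_1=(\mathfrak{f}_1',\mathfrak{f}_1'')$ and $\mathfrak{f}_2=(\mathfrak{f}_2',\mathfrak{f}_2'')$ consist of lifts living in four different covering spaces $\hat M_1,\hat C,\hat M_2,\hat V$; ``transporting'' them to the covers $\hat Q,\hat G,\hat{\overline{W}},\hat{\overline{B}},\hat S,\hat M$ that appear in \eqref{gs1}--\eqref{gs7} requires choosing lifts of the various inclusions, and these choices must be made consistently across the two commutative squares $\hat G\to\hat W,\hat B\to\hat S$ and $\hat S\to\hat M_1,\hat M_2\to\hat M$. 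Milnor-compatibility with all seven sequences at once does not follow merely from $\mathfrak{f}_1,\mathfrak{f}_2$ being fundamental families; it is what needs proof. The paper's proof resolves this by a bottom-up construction in a specific order: it starts from $Q^{\pm}$ (whose lifts are canonical since $\hat Q^{\pm}\cong Q^{\pm}$), builds compatible families on $\overline C,\overline V$ via \eqref{gs4},\eqref{gs5}, then on $G$ via \eqref{gs1}, then extends across the first commutative square to $\overline W,\overline B,S$ for \eqref{gs2}, and finally across the second square to $M_1,M_2,M$ for \eqref{gs3},\eqref{gs6},\eqref{gs7}; only at this last stage does it choose the remaining interior lifts of $\mathfrak{f}_1',\mathfrak{f}_2'$ so that the resulting $\mathfrak{f}_1,\mathfrak{f}_2$ represent $\mathfrak{e}_1,\mathfrak{e}_2$ and their union represents $\mathfrak{e}=\mathfrak{e}_1\cup\mathfrak{e}_2$. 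Your top-down phrasing (fix $\mathfrak{f}_1,\mathfrak{f}_2$ first, derive everything else) can be made to work, but the compatibility check you postpone is exactly the content of the paper's proof, not a routine afterthought.
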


\begin{proof}
The idea of the proof is simple: we want to apply theorem \ref{milnor} on the exact sequences of Section \ref{Setting}. To this end, we need to specify compatible bases (at least up to sign) on the twisted complexes. The parenthesis ``at least up to sign'' is meaningfull: we remember that we are not considering homology orientations (see Remark \ref{homology orientation}) and we have a sign indeterminacy in the torsion. In order to consider signs, one have to track the behavior of the homology orientations and to choose bases compatible also in the sign (notice that some of the morphisms in the Mayer-Vietoris exact sequences have a minus sign); this will complicate too much the proof and the results.

We start from the exact sequence \eqref{gs4}; notice that there is only a fundamental family on $Q^+$ (because $\hat{Q}^+\cong Q^+$). Now choose a fundamental family $\mathfrak{f}_1''$ on $(\overline{C},Q^+)$. One easily sees that the union of these two fundamental families gives a fundamental family on $\overline{C}$ and these choices lead to compatible bases. 
The same approach works on \eqref{gs5}, and we obtain compatible fundamental families on $Q^-$, $(\overline{V},Q^-)$, $\overline{V}$. Denote by $\mathfrak{f}_2''$ the fundamental family on $(\overline{V},Q^-)$.

Denote by $\hat V$, $\hat C$, $\hat G$ the maximal abelian coverings of $\overline{V}$, $\overline{C}$, $G$ respectively. We have the natural inclusions $\hat V\hookrightarrow \hat G$, $\hat C\hookrightarrow \hat G$, and one easily sees that the union of the fundamental families on $(\overline{V},Q^-)$, $(\overline{C},Q^+)$ gives a fundamental family $\mathfrak{f}^G$ on $G$. If we chose on $Q$ the fundamental family given by the union of the fundamental families of $Q^+$, $Q^-$, we have that the chosen bases are compatible with respect to the exact sequence \eqref{gs1}.

Now consider the exact sequence \eqref{gs2} and the commutative diagram
$$
\begin{CD}
\hat G    @>{i_1}>>  \hat W\\
@V{i_2}VV             @V{j_1}VV\\
\hat B    @>{j_2}>>  \hat S.
\end{CD}
$$
Here $\hat W$, $\hat B$, $\hat S$ are the maximal abelian coverings of $\overline W$, $\overline B$, $S$. The morphisms are lifts of the corresponding inclusion, and they are equivariant with respect to the inclusion homomorphism in first integer homology. We already have a fundamental family $\mathfrak{f}^G$ on $G$. $i_1(\mathfrak{f}^G)$ is a family of cells in $\hat W$ such that each cell in $G\subset W$ lifts to exactly one cell in the family. Complete $i_1(\mathfrak{f}^G)$ to a  fundamental family $\mathfrak{f}^W$ on $\hat W$ by adding a lift for each cell in $W\setminus G$. In the same way, starting from the family $i_2(\mathfrak{f}^G)$, we obtain a fundamental basis $\mathfrak{f}^B$ of $B$. Notice that $\mathfrak{f}^S=j_1(\mathfrak{f}^W)\cup j_2(\mathfrak{f}^B\setminus i_2(\mathfrak{f}^G))$ is a fundamental basis of $S$ and that the bases are compatible.

It remains to analyze sequences \eqref{gs3},\eqref{gs6},\eqref{gs7}. Consider the commutative diagram
$$
\begin{CD}
\hat S    @>{r_1}>>  \hat M_1\\
@V{r_2}VV             @V{s_1}VV\\
\hat M_2    @>{s_2}>>  \hat M.
\end{CD}
$$
As above, we want to complete $r_1(\mathfrak{f}^S)$ to a fundamental family of $M_1$. 
Consider the family in $r_1(\mathfrak{f}^S)$ of the cells that are lifts of cells in $S\setminus\overline{W}=B$, and complete it to a fundamental family $\mathfrak{f}_1'$ of $(M_1,\overline{W})$ such that $\mathfrak{f}_1=(\mathfrak{f}_1',\mathfrak{f}_1'')$ is a fundamental family of $(M_1,\mathcal{P})$ representing the Euler structure $\mathfrak{e}_1$.
In the same way we obtain a fundamental family $\mathfrak{f}_2=(\mathfrak{f}_2',\mathfrak{f}_2'')$ 
of $(M_2,\mathcal{P}')$ representing the Euler structure $\mathfrak{e}_2$.

Now, $\mathfrak{f}_1'\cup r_1(j_1(\mathfrak{f}^W))$ is a fundamental basis on $M_1$ and $\mathfrak{f}_2'\cup r_2(j_2(\mathfrak{f}^B))$ is a fundamental basis on $M_2$. Choose on $M$ the fundamental family $\mathfrak{f}=s_1(\mathfrak{f}_1')\cup s_2(\mathfrak{f}_2')\cup t(\mathfrak{f}^G)$ (where $t=s_1\circ r_1\circ j_1\circ i_1:\hat G\rightarrow \hat M$). One easily sees that $\mathfrak{f}$ induces the Euler structure $\mathfrak{e}=\mathfrak{e}_1\cup\mathfrak{e}_2$ and that the chosen fundamental families are compatible with respect to the exact sequences \eqref{gs3},\eqref{gs6},\eqref{gs7}.

Therefore we can apply theorem \ref{milnor}, obtaining seven equalities between torsions. 
The combination of them leads to the result; in the following calculation, all the torsions are computed with respect to the fundamental bases chosen above and the bases of the twisted homologies fixed in Section \ref{Setting}:
$$
\tau^{\varphi}(M;\mathfrak{e},\mathfrak{h})\underset{\eqref{gs3}}{=}(\tau_c)^{-1}\cdot(\tau^{\varphi}(S))^{-1}\cdot\tau^{\varphi}(M_1)\cdot\tau^{\varphi}(M_2)\underset{\eqref{gs2},\eqref{gs6},\eqref{gs7}}{=}
$$
$$
=\tau_b\cdot (\tau_c)^{-1}\cdot\tau_f\cdot\tau_g\cdot\tau^{\varphi}(G)\cdot\tau^{\varphi}(M_1,\overline{W})\cdot\tau^{\varphi}(M_2,\overline{B})\underset{\eqref{gs1}}{=}
$$
$$
=(\tau_a)^{-1}\cdot\tau_b\cdot (\tau_c)^{-1}\cdot\tau_f\cdot\tau_g\cdot(\tau^{\varphi}(Q))^{-1}\cdot\tau^{\varphi}(\overline{V})\cdot\tau^{\varphi}(\overline{C})\cdot\tau^{\varphi}(M_1,\overline{W})\cdot\tau^{\varphi}(M_2,\overline{B})\underset{\eqref{gs4},\eqref{gs5}}{=}
$$
$$
=(\tau_a)^{-1}\cdot\tau_b\cdot(\tau_c)^{-1}\cdot\tau_d\cdot\tau_e\cdot\tau_f\cdot\tau_g\cdot\tau^{\varphi}(M_1,\mathcal{P};\mathfrak{e}_1,\mathfrak{h}_1)\cdot\tau^{\varphi}(M_2,\mathcal{P}';\mathfrak{e}_2,\mathfrak{h}_2)\qedhere
$$
\end{proof}

\begin{nta}\label{rmk.gluing}
Theorem~\ref{thm.gluing} extends easily to the case $\partial M\neq\emptyset$. One have to consider partitions $\mathcal{P},\mathcal{P}_1,\mathcal{P}_2$ of $\partial M_1\cap\partial M_2$, $\partial M\cap\partial M_1$, $\partial M\cap\partial M_2$ respectively; the resulting formula is:
\begin{equation}\notag
\begin{split}
\tau^{\varphi}(M,\mathcal{P}_1\cup &\mathcal{P}_2;\mathfrak{e}_1\cup\mathfrak{e}_2,\mathfrak{h})=\\
&=\mathfrak{T}(\mathfrak{h},\mathfrak{h}_1,\mathfrak{h}_2)\cdot\tau^{\varphi}(M_1,\mathcal{P}\cup\mathcal{P}_1;\mathfrak{e}_1,\mathfrak{h}_1)\cdot\tau^{\varphi}(M_2,\mathcal{P}'\cup\mathcal{P}_2;\mathfrak{e}_2,\mathfrak{h}_2).
\end{split}
\end{equation}
Now the term $\mathfrak{T}(\mathfrak{h},\mathfrak{h}',\mathfrak{h}'')$ contains other factors, coming from exact sequences involving elements of the partitions $\mathcal{P}_1$ and $\mathcal{P}_2$. We omit the details and the proof, that follows the same scheme as above.
\end{nta}

\subsection{Some computations}\label{Some computations}

In what follows we will try to choose the bases of the twisted homologies wisely, in order to simplify the computation of $\mathfrak{T}(\mathfrak{h},\mathfrak{h}_1,\mathfrak{h}_2)$. To this end, we notice that the exact sequences \eqref{gs1},\eqref{gs4},\eqref{gs5} are easy to compute in general, because we know exactly the involved chain complexes:
\begin{enumerate}
\item $Q=Q^+\cup Q^-$ is a finite union of points. Let $Q^-=\{p_1,\dots,p_j\}$, $Q^+=\{p_{j+1},\dots p_k\}$. We have $H_*^{\varphi}(Q)=H_0^{\varphi}(Q)=\oplus_{i=1}^k \mathbb{F}\, p_i$, where we have identified $Q$ with its maximal abelian covering.
\item $G$ is an union of circles. Take one circle $S$; up to subdivision, $S$ is a CW-complex with exactly one vertex $p$ and one edge $e$. If $\varphi(H_1(S))\neq 1$, then $C_*^{\varphi}(S)$ is acyclic (see \cite[Lemma~6.2]{turaev2}), if $\varphi(H_1(S))=1$ then $H_0^{\varphi}(S)\cong H_1^{\varphi}(S)\cong\mathbb{F}$. We define a \emph{canonical basis} on $H_*^{\varphi}(S)$ as the natural bases $\{\hat p,\hat e\}$, where $\hat p$ and $\hat e$ are lifts of $p$ and $e$ such that $\partial\hat e= t\hat p-\hat p$, and $t$ is the generator of the action of $H_1(S)$ on $\hat S$ (see \figurename~\ref{circle}). The canonical basis on $H_*^{\varphi}(G)$ is the union of the canonical bases of all the circles in $G$. 

\begin{figure}[b]

\centering

\includegraphics[scale=0.5]{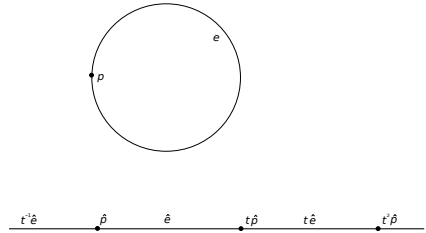}

\caption{A circle and its maximal abelian covering $\mathbb{R}$.}\label{circle}

\end{figure}

\item $\overline{V}$ and $\overline{C}$ are unions of circles and segments. We have already studied the twisted homology of circles in point 2. Notice that segments retracts to points, hence their twisted homology is the same as point 1.

\item Now we study the pair $(\overline{C},Q^+)$ (the same applies to $(\overline{V},Q^-)$) and we fix a canonical basis, as already done for $H_*^{\varphi}(G)$. Each connected component $S$ of $\overline{C}$ has one of the following four forms:

\begin{itemize}
\item $S$ is a circle: we have already studied this case in point 2, and we have already shown how to choose a canonical basis.
\item $S$ is a segment and both points of $\partial S$ belongs to $Q^-$: we have already studied it in point 3. Up to subdivisions, $S$ is a CW-complex with exactly one edge $e$ and two vertices $p_1,p_2\in Q^-$ such that $\partial e=p_2-p_1$. We have $H_0^{\varphi}(S,Q^+)=\mathbb{F}\, p_1$, thus a basis is formed by an element only. As a canonical basis for $H_*^{\varphi}(S)$ we chose $\{[p_1]\}$
\item $S$ is a segment and both points of $\partial S$ belongs to $Q^+$: up to subdivisions, $S$ is a CW-complex with exactly one edge $e$ and two vertices $p_1,p_2$ such that $\partial e=p_2-p_1$. We obtain $H_0^{\varphi}(\overline{C},Q^+)=\{1\}$ and $H_1^{\varphi}(S,Q^+)=\mathbb{F}\, e$. In this case the canonical basis will be $\{[e]\}$.
\item $S$ is a segment and $\partial S$ is formed by a point in $Q^+$ and a point in $Q^-$: 
one easily check that $C_*^{\varphi}(S,Q^+)$ is acyclic. 
\end{itemize}
The union of the canonical bases on the connected components gives the canonical basis on $H_*^{\varphi}(\overline{C},Q^+)$.
\end{enumerate}
These observations allows to easily compute torsions $\tau_a, \tau_d, \tau_e$. We obtain the following:

\begin{lmm}
Let $\{[p_1],\dots,[p_r],[e_1],\dots, [e_s]\}$ be the canonical basis of $(\overline{C},Q^+)$ and $\{[p_{r+1}],\dots,[p_u],[e_{s+1}],\dots, [e_v]\}$ be the canonical basis of $(\overline{V},Q^-)$. Equip $H_*^{\varphi}(G)$ with the canonical basis $\mathfrak{h}^G$. Let $\mathfrak{h}''_1,\mathfrak{h}''_2$ be generic bases of $H_*^{\varphi}(\overline{C},Q^+)$, $H_*^{\varphi}(\overline{V},Q^-)$. 
 Then: 
$$
\{\mathfrak{h}''_1,\mathfrak{h}''_2\}=\{a_1 [p_1],\dots, a_u [p_u],b_1 [e_1],\dots b_v [e_v]\}
$$
for opportune $a_i,b_j\in\mathbb{F}^*$.
With respect to the bases $\mathfrak{h}^G,\mathfrak{h}'_1,\mathfrak{h}'_2$ (regardless of the choice of the bases for the other twisted homologies), we have: 
$$
(\tau_a)^{-1}\cdot\tau_d\cdot\tau_e=\frac{a_1\cdots a_u}{b_1\cdots b_v}.
$$
\end{lmm}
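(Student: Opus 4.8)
The plan is to turn the statement into a finite check, carried out connected component by connected component, using the explicit descriptions of the complexes recorded in items (1)--(4) above.

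First I would settle the displayed identity for $\{\mathfrak{h}''_1,\mathfrak{h}''_2\}$. Since $H_*^{\varphi}(\overline{C},Q^+)=\bigoplus_S H_*^{\varphi}(S,Q^+\cap S)$, the sum running over the connected components $S$ of $\overline{C}$, item (4) shows every summand is at most one-dimensional in each degree, spanned by a canonical generator of the form $[p_i]$ in degree $0$ or $[e_j]$ in degree $1$; the same holds for $(\overline{V},Q^-)$. Because the torsion of an acyclic complex depends on a basis of a given term only through its determinant relative to a reference basis of that term, for the purpose of computing $\tau_d$ and $\tau_e$ we may replace $\mathfrak{h}''_1,\mathfrak{h}''_2$ by diagonal bases obtained by rescaling the canonical generators; naming the scalars $a_i$ on the degree-$0$ generators $[p_i]$ and $b_j$ on the degree-$1$ generators $[e_j]$ gives the asserted form.

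Next I would reduce the whole computation to the case of canonical bases. Recall that rescaling the basis of the degree-$k$ term of an acyclic complex by $c$ multiplies its torsion by $c^{(-1)^k}$. Applying this to the long exact homology sequences of \eqref{gs1}, \eqref{gs4}, \eqref{gs5}, whose terms carry the chosen homology bases, I observe that each of $H_*^{\varphi}(Q^+)$, $H_*^{\varphi}(Q^-)$, $H_*^{\varphi}(\overline{V})$, $H_*^{\varphi}(\overline{C})$ occurs in exactly two of the three sequences, in positions of the same parity, so rescaling one of its classes multiplies $\tau_a$ and $\tau_d$ (or $\tau_a$ and $\tau_e$) by mutually inverse factors; hence $(\tau_a)^{-1}\tau_d\tau_e$ is insensitive to those bases. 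By contrast $H_*^{\varphi}(\overline{C},Q^+)$ occurs only in \eqref{gs4} and $H_*^{\varphi}(\overline{V},Q^-)$ only in \eqref{gs5}, each in a position whose parity equals the homological degree, so rescaling a generator $[p_i]$ by $a_i$ multiplies $(\tau_a)^{-1}\tau_d\tau_e$ by $a_i$ and rescaling a generator $[e_j]$ by $b_j$ multiplies it by $b_j^{-1}$. This yields $(\tau_a)^{-1}\tau_d\tau_e=\frac{a_1\cdots a_u}{b_1\cdots b_v}\cdot C$, with $C$ the value of the same product when all homology bases are canonical, so the lemma is reduced to proving $C=\pm1$.

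Finally I would prove $C=\pm1$ by decomposing \eqref{gs1}, \eqref{gs4}, \eqref{gs5} into direct sums indexed by the connected components of $G$, $\overline{C}$, $\overline{V}$, and using multiplicativity of torsion over direct sums: it suffices to see $\tau_a,\tau_d,\tau_e$ are each $\pm1$ on each component. Running through the list of item (4) together with items (2)--(3), in every case the relevant long exact sequence is, after reindexing, one of three shapes --- the zero complex, an isomorphism between two copies of $\mathbb{F}$ in consecutive degrees carrying corresponding canonical bases, or the three-term sequence $0\to\mathbb{F}\to\mathbb{F}^2\to\mathbb{F}\to0$ coming from an arc modulo its two endpoints --- and in each a one-line computation from the definition of torsion gives $\pm1$. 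The step I expect to be the main obstacle is exactly this last one: carrying out the component-wise check while keeping the lifts and the canonical bases of the various $H_*^{\varphi}$ consistent across the three sequences, and in particular handling the circles $K\subset G$ that meet $Q$ and on which $\varphi$ is non-trivial, where \eqref{gs1} restricts to the Mayer--Vietoris sequence of $K=(\overline{C}\cap K)\cup(\overline{V}\cap K)$ glued along $Q\cap K$ and the twisted boundary maps must be tracked carefully so that the units they introduce cancel and leave $C=\pm1$.
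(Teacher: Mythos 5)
Your overall strategy is the natural one, and I believe it is what the author has in mind for this unproven lemma: normalize the bases, track how each rescaling propagates through the three long exact sequences, and reduce to a component-by-component verification with canonical bases. The bookkeeping in your second paragraph is essentially correct (though note that a rescaling multiplies $\tau_a$ and $\tau_d$ by the \emph{same} factor $c^{(-1)^i}$, not by ``mutually inverse'' ones; the inverse comes from the exponent $-1$ on $\tau_a$ in the product, which you use correctly anyway), and the first paragraph's replacement of a generic basis by a diagonal one of the same per-degree determinant is a legitimate reduction.

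The genuine gap is exactly the one you flag at the end, and I think it is more than a ``tedious but routine'' verification. Consider a circle $K\subset G$ that passes through cuspidal points, say with two cusps $q_1\in Q^+$, $q_2\in Q^-$ and two arcs $\overline A_1\subset\overline V$, $\overline A_2\subset\overline C$, and suppose $\varphi$ is nontrivial on $H_1(K)$, say $\varphi([K])=t\neq 1$. Then $H_*^{\varphi}(K)=0$, $H_*^{\varphi}(\overline A_i,Q^\pm\cap \overline A_i)=0$ (the ``mixed endpoints'' case of item (4)), so the canonical bases $\mathfrak h^G$, $\mathfrak h''_1$, $\mathfrak h''_2$ contribute nothing from $K$, hence the lemma predicts that the product of the $K$--contributions to $(\tau_a)^{-1}\tau_d\tau_e$ is $\pm 1$. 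But Milnor multiplicativity applied to the three sequences restricted to $K$ gives
$$
\bigl(\tau_a\big|_K\bigr)^{-1}\,\tau_d\big|_K\,\tau_e\big|_K
\;=\;\pm\,\frac{\tau^{\varphi}(K)}{\tau^{\varphi}(\overline A_2,q_1)\,\tau^{\varphi}(\overline A_1,q_2)}
\;=\;\pm\,(1-t)^{\mp1},
$$
and a direct computation of the middle Mayer--Vietoris isomorphism $H_0^\varphi(Q\cap K)\to H_0^\varphi(\overline A_1)\oplus H_0^\varphi(\overline A_2)$ confirms $\tau_a\big|_K=\pm(1-t)^{\pm1}$ while $\tau_d\big|_K=\tau_e\big|_K=\pm1$. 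So the constant $C$ of your reduction is \emph{not} $\pm 1$ on such components; the sequences restricted to $K$ are not among the ``three shapes'' you list (the middle map is a $2m\times 2m$ matrix with determinant $\pm(1-t)$, not a basis-permutation), and the units do not cancel. Either one must restrict to boundary patterns for which no component of $G$ meets $Q$ while carrying a $\varphi$-nontrivial loop, or the statement needs an extra multiplicative factor involving $\tau^{\varphi}$ of those circles. Your proposal should not treat this as a presentational obstacle: as written, the verification that $C=\pm1$ fails, and resolving this is the real content of the lemma.
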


\section{Combinatorial encoding of Euler structures}\label{sctn4}

In Section \ref{stream-spines} and \ref{Combings} we recall the main results of \cite{petronio}: in particular, we define stream-spines and we show that they encode vector fields on a 3-manifold. Using stream-spines, we show how to geometrically invert the reconstruction map $\Psi$ (Theorem~\ref{combinatorial encoding}): this will give us a way to explicitly compute torsions.

\subsection{Stream-spines}\label{stream-spines}

A \emph{stream-spine} $P$ is a connected compact 2-dimensional polyedron such that a neighborhood of each point of $P$ is homeomorphic to one of the five models in \figurename~\ref{spine}.

\begin{figure}[b]

\centering

\includegraphics[scale=0.7]{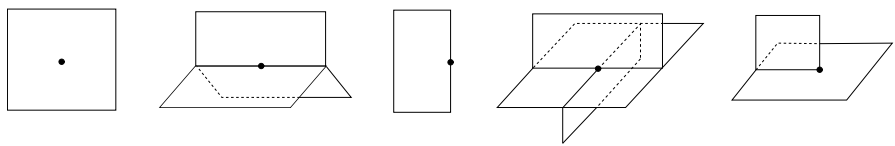}

\caption{Local models of a stream-spine}\label{spine}

\end{figure}

Specifically, a stream-spine $P$ is formed by:
\begin{itemize}
\item some open surfaces, called \emph{regions}, whose closure is compact and contained in $P$;
\item some \emph{triple lines}, to which three regions are locally incident;
\item some \emph{singular lines}, to which only one region is locally incident;
\item some points, called \emph{vertices}, to which six regions are incident;
\item some points, called \emph{spikes}, to which a triple line and a singular line are incident; 
\end{itemize}

A \emph{screw-orientation} on a triple line is an orientation of the line together with a cyclic ordering of the three regions incident on it, viewed up to a simultaneous reversal of both (see \figurename~\ref{branching}-left).

A stream-spine is said to be \emph{oriented} if 
\begin{itemize}
\item each triple line is endowed with a screw-orientation, so that at each vertex the screw-orientations are as in\figurename~\ref{branching}-center;
\item each region is oriented, in such a way that no triple line is induced three times the same orientation by the regions incident to it.
\end{itemize}

\begin{figure}

\centering

\includegraphics[scale=0.6]{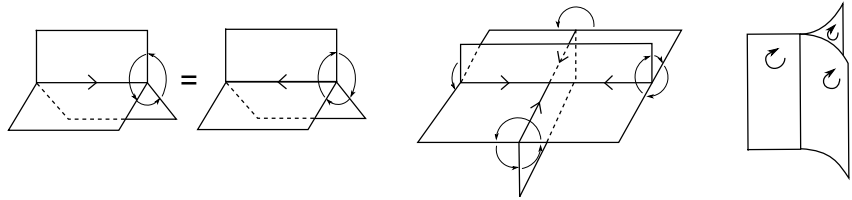}

\caption{Convention on screw orientation, compatibility at vertices and geometric interpretation of branching.}\label{branching}

\end{figure}

Two oriented stream-spines are said to be \emph{isomorphic} if there exists a PL-homomorphism between them preserving the orientations of the regions and the screw-orientations of the triple lines.

We denote by $\mathcal{S}_0$ the set of oriented stream-spines viewed up to isomorphism. An embedding of $P\in\mathcal{S}_0$ into a 3-manifold $M$ is said to be \emph{branched} if every region of $P$ have a well defined tangent plane in every point, and the tangent planes at a singularity $p\in P$ to each region locally incident to $p$ coincide (see \figurename~\ref{branching}-right for the geometric interpretation near a triple line; see \cite[\S~1.4]{petronio} for an accurate definition of branching).

\begin{prp}\label{spines to manifolds}
To each stream-spine $P\in\mathcal{S}_0$ is associated a pair $(\tilde M,\tilde{\mathfrak{v}})$, defined up to oriented diffeomorphism, where $\tilde M$ is a connected 3-manifold  and $\tilde{\mathfrak{v}}$ is a vector field on $\tilde M$ whose orbits intersect $\partial \tilde M$ in both directions.
Moreover, $P$ embeds in a branched fashion in $\tilde M$ and the choice of a cellularization on $P$ induces a cellularization $\tilde{\mathcal{C}}$ on $\tilde M$.
\end{prp}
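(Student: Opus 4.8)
The plan is to build $(\tilde M,\tilde{\mathfrak v})$ by a local-to-global thickening of $P$, in the spirit of the classical reconstruction of a $3$-manifold from a standard spine. First I would fix, for each of the five local models of \figurename~\ref{spine}, a standard \emph{thickened} model: a smooth $3$-dimensional piece $U$ carrying a non-singular vector field $\mathfrak v_U$, together with a branched embedding of the corresponding piece of $P$ into $U$ for which $\mathfrak v_U$ is everywhere transverse to $P$ (``vertical''). Concretely, a region thickens to (region)$\,\times[-1,1]$ with $\mathfrak v_U=\partial_t$; a triple line, a singular line, a vertex and a spike each admit an analogous model in which three (resp.\ one, six, one-plus-one) vertical slabs are glued along their vertical boundary, the branching condition prescribing that the slabs share a common tangent plane along the gluing locus. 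In each model the part of $\partial U$ transverse to $\mathfrak v_U$ splits into an \emph{incoming} and an \emph{outgoing} part, so that $\mathfrak v_U$ points inward on one and outward on the other.

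Next I would glue these models. The combinatorics of $P$ prescribes how the pieces are attached, and the point is that the orientation data of an oriented stream-spine — the orientation of each region, plus the screw-orientation of each triple line, compatible at the vertices as in \figurename~\ref{branching} — is exactly what is needed to match the transverse direction of $\mathfrak v_U$ across overlaps (incoming meets incoming) and to orient the resulting manifold. Carrying out the gluing yields a connected oriented $3$-manifold $\tilde M$ with a non-singular vector field $\tilde{\mathfrak v}$; since $\tilde{\mathfrak v}$ is vertical in each model, every orbit crosses $P$ and runs between the incoming and outgoing parts of $\partial\tilde M$, so $\tilde{\mathfrak v}$ meets $\partial\tilde M$ in both directions. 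Well-definedness up to oriented diffeomorphism follows because each standard model is unique up to diffeomorphism and the gluing is dictated by the isomorphism class of $P$. The branched embedding of $P$ into $\tilde M$ is the union of the local branched embeddings — branching holds by construction — and coincides with the one described in \cite[\S~1.4]{petronio}.

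For the cellularization, given a cellularization $\mathcal D$ of $P$ I would thicken it cell by cell. Each open cell $c$ of $\mathcal D$ lies in the interior of a region or along a singular or triple line and has a vertical interval over it inside $\tilde M$; I take $c\times(-1,1)$ as an open $3$-cell of $\tilde{\mathcal C}$, its vertical faces $c\times\{\pm 1\}$ contribute cells of $\partial\tilde M$, and I add the lower-dimensional cells coming from the cells of $\mathcal D$ themselves together with the subdivision of the vertical intervals induced near triple lines, singular lines, vertices and spikes. Verifying that this collection is a genuine CW-structure on $\tilde M$ is routine and local. Since Section~\ref{stream-spines} recalls the results of \cite{petronio}, most of the statement is already available there; the only addition here is the compatibility of cellularizations.

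The main obstacle is the second step: checking that the orientation conditions on an oriented stream-spine genuinely suffice to glue the vertical local models coherently — in particular along triple lines, where the screw-orientation must be reconciled with the three region orientations and with the requirement that no triple line be induced three times the same orientation — and that the glued object is orientable. This bookkeeping at vertices and spikes is the delicate point, and it is precisely what the notion of oriented stream-spine is designed to handle; it is carried out in \cite{petronio}.
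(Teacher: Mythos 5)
Your proposal follows essentially the same route as the paper: defer the existence of $(\tilde M,\tilde{\mathfrak v})$ and the branched embedding to \cite[Prop.~1.2]{petronio}, then obtain $\tilde{\mathcal C}$ by thickening the cells of $P$ and gluing cylinders. The one place where you are thinner than the paper is precisely the step you declare ``routine and local'': the gluing pattern at a triple line is asymmetric. Of the three cylinders $c_1,c_2,c_3$ sharing the face over $e$, the one whose region induces on $e$ the orientation opposite to the other two must have its face $e\times[-1,1]$ \emph{subdivided} into $e\times[-1,0]$ and $e\times[0,1]$ before it can be glued to the corresponding faces of $c_2,c_3$; and at a singular line the face $e\times[-1,1]$ is collapsed onto $e\times\{0\}$ rather than glued. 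These two prescriptions are what produce, respectively, the convex and concave tangency circles of the boundary pattern, and they are the actual content of the paper's proof beyond the citation. Since you do gesture at ``the subdivision of the vertical intervals induced near triple lines, singular lines, vertices and spikes,'' I would not call this a gap, but it is the one step worth spelling out rather than waving away.
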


\begin{proof}
The construction of $\tilde M$ and $\tilde{\mathfrak{v}}$ is carefully analyzed in \cite[Prop.~1.2]{petronio}. One start from the spine, thicken it to a PL-manifold $\hat M$ and then smoothen the angles to obtain a differentiable manifold $\tilde M$. $\tilde{\mathfrak{v}}$ is a vector field everywhere positively transversal to the spine.

It remains to show how to obtain the cellularization $\tilde{\mathcal{C}}$ from the cellularization of $P$. We will do it by thickening the 2-cells of $P$ and then showing how to glue them together along the edges.

Pick a 2-cell $r$  and thicken it to a cylinder $c\cong r\times [-1,1]$. This identification is done in such a way that the original $r$ is identified with $r\times \{0\}$, and the orientation of $r$ (inherited from the branching of $P$) together with the positive orientation on the segment $[-1,1]$ gives the positive orientation of $\mathbb{R}^3$ (see \figurename~\ref{cylinder}). The upper and lower faces $r\times \{1\}$ and $r\times \{-1\}$ will be part of the boundary (so they are not glued with any other quadrilateral); the side surface will be glued with the side surfaces of the other cylinders.

A natural vector field $\mathfrak{v}^c$ is defined on $c$: $\mathfrak{v}^c$ is the constant field whose orbits are rectilinear, directed from $r\times \{-1\}$ to $r\times \{1\}$, and orthogonal to $r\times \{0\}$ (see again \figurename~\ref{cylinder}).

\begin{figure}

\centering

\includegraphics[scale=0.5]{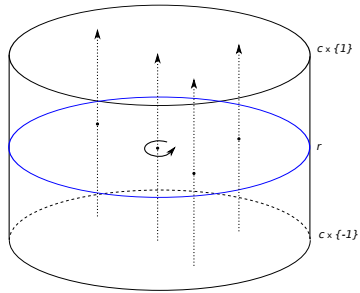}

\caption{Thickening $c$ of the 2-cell $r$ and the vector field $\mathfrak{v}^c$.}\label{cylinder}

\end{figure}

$c$ has a natural cellularization. Let $p_1,\dots, p_k$, $e_1,\dots e_k$ be the vertices and edges composing the boundary of $r$. Then the cells of $c$ are the following:
\begin{enumerate}
\item the vertices are the points $p_i\times \{-1\}$ and $p_i\times \{1\}$, for $i=1,\dots,k$;
\item the edges are the lines $e_i\times \{-1\}$, $e_i\times \{1\}$, $p_i\times [-1,1]$, for $i=1,\dots,k$;
\item the 2-cells are the faces $r\times\{-1\}$, $r\times\{1\}$ and $e_i\times [-1,1]$ for $i=1,\dots,k$;
\item the only 3-cell is $r\times [-1,1]$.
\end{enumerate}

Now we shift our attention from the 2-cells to the edges of the cellularization of $P$. The edges will describe how to modify the side surfaces of the cylinders and how to glue them together.

Pick an edge $e$. Depending on the nature of $e$, we distinguish three cases:
 \begin{itemize}
 \item if $e$ is a regular line (i.e., $e$ is neither a singular nor a triple line), then it is contained in the boundary of two 2-cells $r_1,r_2$. The respective cylinders $c_1,c_2$ are simply glued together along the common face $e\times [-1,1]$.
 \item if $e$ is a singular line, then it is only contained in the boundary of one 2-cell $r$, thus no gluing is needed. We simply collapse the corresponding face $e\times [-1,1]$ to the line $e\times \{0\}$ via the natural projection. Note that this collapse gives rise to a concave tangency line on the boundary (see \figurename~\ref{gluecyl}-center);
 \item if $e$ is a triple line, then there are three 2-cells $r_1,r_2,r_3$ containing the face $e\times [-1,1]$. Recall that $r_1,r_2,r_3$ are oriented (with the orientation inherited from the spine) and that one, say $r_1$, induces on $e$ the opposite orientation with respect to the other two ($r_2,r_3$). Subdivide the cell $e\times [-1,1]$ in $r_1$ into two subcells $e\times [-1,0]$ and $e\times [0,1]$. Glue this two subcells with the corresponding cells on $r_2$ and $r_3$, as shown in \figurename~\ref{gluecyl}-right. Note that this gluing gives rise to a convex tangency line. 
 \end{itemize}
\figurename~\ref{vertex} and \figurename~\ref{spike} show what happens near vertices and spikes.

\begin{figure}

\centering

\includegraphics[scale=0.7]{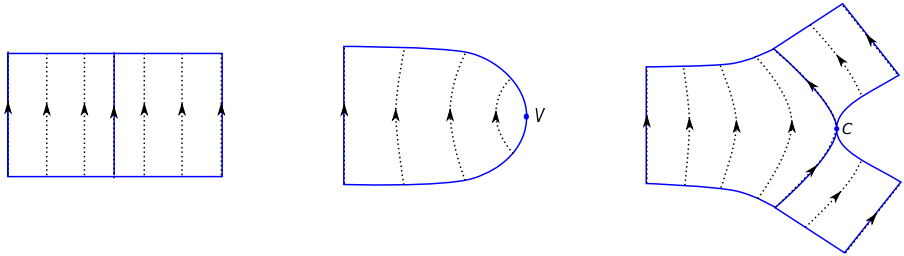}

\caption{Cross-section of gluings and modifications along regular (left), singular (center) and triple (right) line.}\label{gluecyl}

\end{figure}

\begin{figure}

\centering

\includegraphics[scale=0.4]{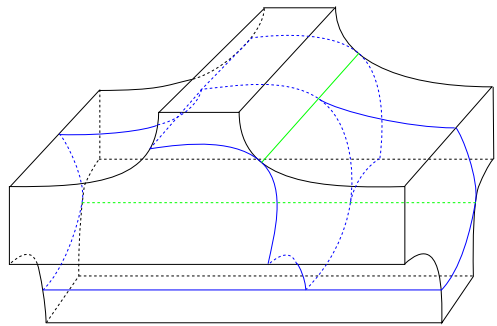}

\caption{Behavior of $\tilde{\mathfrak{v}}$ near a vertex. There are two concave lines corresponding to the two triple lines intersecting in the vertex. Notice that there is exactly one orbit of $\tilde{\mathfrak{v}}$ that is tangent to both the triple lines.}\label{vertex}

\end{figure}

\begin{figure}

\centering

\includegraphics[scale=0.6]{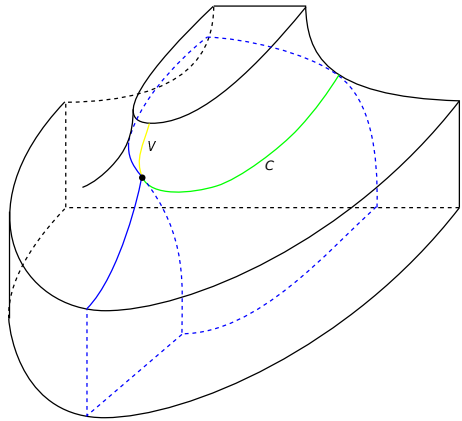}

\caption{Behavior of $\tilde{\mathfrak{v}}$ near a spike. Notice that $\tilde{\mathfrak{v}}$ goes from a concave tangency line (green) to a convex tangency line (yellow), or viceversa, throught a cuspidal point.}\label{spike}

\end{figure}

The gluing of the cylinders $c$, opportunely modified as explained above, and their vector fields $\mathfrak{v}^c$ gives rise to the pair $(\tilde M,\tilde {\mathfrak{v}})$ and to the cellularization $\tilde{\mathcal{C}}$.
\end{proof}

\subsection{Combings}\label{Combings}

The main achievement of \cite{petronio} is to show that stream-spines encode combings, so that they can be used as a combinatorial tool to study vector fields on 3-manifolds. 

Proposition \ref{spines to manifolds} gives us a map $\varphi:\mathcal{S}_0\rightarrow\mathfrak{Comb}$. Unfortunately, this map is not surjective, as the image is formed only by combings $[M,\mathfrak{v}]$ where $\mathfrak{v}$ is a traversing field, i.e., a field whose orbits start and end on $\partial M$. Consider the subset $\mathcal{S}\subset\mathcal{S}_0$ of stream-spines $P$ whose image $\varphi(P)=[\tilde M,\tilde{\mathfrak{v}}]$ contains at least one trivial sphere $S_{triv}$ (i.e., a sphere in $\partial\tilde M$ that is split into one white disc and one black disc by a concave tangency circle). 
Denote by $\Phi(P)$ the combing $[M,\mathfrak{v}]$ obtained from $\varphi(P)$ by gluing to $S_{triv}$ a trivial ball $B_{triv}$ (i.e., a ball endowed with a vector field $\mathfrak{u}$ such that $(\partial B_{triv},\mathfrak{u}|_{\partial B})$ is a trivial sphere) matching the vector fields. This gives a well defined map $\Phi:\mathcal{S}\rightarrow\mathfrak{Comb}$.

\begin{teo}\label{spines-comb}
$\Phi:\mathcal{S}\rightarrow\mathfrak{Comb}$ is surjective.
\end{teo}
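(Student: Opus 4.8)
The plan is to start from an arbitrary combing $[M,\mathfrak{v}]\in\mathfrak{Comb}$ and produce a stream-spine $P\in\mathcal{S}$ with $\Phi(P)=[M,\mathfrak{v}]$. The first step is a normalization of the field: given $[M,\mathfrak{v}]$, I would carve out a small open ball $B\subset\mathrm{Int}\,M$ and modify $\mathfrak{v}$ inside a slightly larger ball so that, on the new boundary sphere $\partial B$, the field $\mathfrak{v}$ restricted to $M\setminus\mathrm{Int}\,B$ realizes exactly a trivial sphere $S_{triv}$ (one white disc, one black disc, separated by a concave tangency circle). This is possible because any two generic fields on a ball that agree near the boundary sphere are homotopic rel boundary when the Euler-structure obstruction vanishes, and on a ball with the trivial-sphere pattern the relevant Euler set is a singleton; so we may cut $[M,\mathfrak{v}]$ as $\varphi(P_0)$ for some traversing combing $[M\setminus\mathrm{Int}\,B,\ \mathfrak{v}']$ once we know the latter is in the image of $\varphi$.

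The second — and main — step is therefore to show that \emph{every} combing on a 3-manifold becomes traversing after removing a trivial ball, i.e.\ that the field on $M\setminus\mathrm{Int}\,B$ can be homotoped (rel the trivial-sphere boundary) to one whose every orbit meets the boundary in forward and backward time. This is the heart of the matter and the expected obstacle: an arbitrary non-singular field may have closed orbits or orbits that are entirely trapped in the interior (limiting onto invariant sets). The strategy is the standard ``creating a dynamical branched surface / stream-spine'' argument of \cite{petronio}: one first makes $\mathfrak{v}$ traversing by a Lyapunov-type modification — pushing the recurrent part of the dynamics across the newly created black disc of $S_{triv}$, using that we have a free white/black pair available to absorb and re-emit orbits — and then one cites \cite[Prop.~1.2 and its converse]{petronio}, which show that a traversing field on a compact 3-manifold with the appropriate boundary behaviour is exactly the field $\tilde{\mathfrak v}$ associated to some $P_0\in\mathcal S_0$ (via a quotient along the flow onto a branched spine). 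Since the boundary of $M\setminus\mathrm{Int}\,B$ contains the trivial sphere $S_{triv}$ by construction, $P_0$ lies in $\mathcal S$.

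The final step is bookkeeping: by construction $\varphi(P_0)=[M\setminus\mathrm{Int}\,B,\mathfrak v']$ contains the trivial sphere $S_{triv}$, so $\Phi(P_0)$ is obtained by regluing a trivial ball $B_{triv}$ along $S_{triv}$ and matching the fields; but $[M,\mathfrak v]$ is, up to homotopy, precisely this regluing, because the modification in the first step only changed $\mathfrak v$ inside a ball and a trivial ball is the unique (up to homotopy rel boundary) filling of a trivial sphere by a non-singular field. Hence $\Phi(P_0)=[M,\mathfrak v]$, proving surjectivity. The delicate points to be careful about are (i) that the homotopy of $\mathfrak v$ inside the enlarged ball does not disturb the boundary pattern of $M$, and (ii) that the ``make it traversing'' modification genuinely kills \emph{all} recurrence — this is where one leans most heavily on the dynamical results of \cite{petronio} rather than reproving them here.
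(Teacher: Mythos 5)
The paper does not actually prove Theorem~\ref{spines-comb}: it is stated as the main achievement of \cite{petronio} and no proof environment follows it. The only textual hint the paper gives is the remark immediately after, which restates the theorem as: given a non-singular vector field $\mathfrak{v}$ on a closed $M$, one can find a sphere splitting $(M,\mathfrak{v})$ into a trivial ball and a complementary piece on which the field is \emph{traversing}. Your plan reconstructs exactly this scheme, so the overall architecture (carve out a ball with trivial-sphere boundary, arrange for the complementary field to be traversing, apply the quotient-by-flow construction of \cite{petronio} to obtain a spine in $\mathcal{S}$, then use uniqueness of the trivial-ball filling to recover $[M,\mathfrak{v}]$) is the right one, and your observation that $\mathfrak{Eul}(B^3,\mathcal{P}_{\mathrm{triv}})$ is a singleton because $H_1(B^3)=0$ and the Euler condition $\chi(B^3)-\chi(W)-\chi(V)-\chi(Q^+)=1-1-0-0=0$ holds is a clean justification of the final regluing step.

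The genuine gap, which you flag yourself, is the second step: you assert that after removing a trivial ball the restricted field can be made traversing ``by a Lyapunov-type modification --- pushing the recurrent part of the dynamics across the newly created black disc''. This is precisely where all of the real work lies and where a bare plausibility argument does not suffice. An arbitrary non-singular field on $M\setminus \mathrm{Int}\,B$ can have a very complicated recurrent set (minimal sets, Anosov-like invariant sets, and not just isolated periodic orbits), and it is not a priori clear that a single excised ball, fixed in advance, provides enough ``absorbing and re-emitting'' capacity to destroy all of it by a homotopy rel the trivial sphere. Petronio's own argument chooses the sphere and organizes the flow together, rather than picking a small round ball first and then homotoping the field; your decoupled ordering is plausible but would need the dynamical lemma to be proved in that specific form, which you do not attempt. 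Since you, like the paper, ultimately defer the entire dynamical content to \cite{petronio}, your proposal is a correct high-level outline rather than a self-contained proof; as a description of the strategy it matches what the paper is citing.
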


\begin{nta}
In \cite{petronio} is also described a set of moves on stream-spines generating the equivalence relation induced by $\Phi$. We will come back to this point in Section \ref{SSS}.
\end{nta}

\begin{nta}
A restatement of the theorem is the following: given a non-singular vector field $\mathfrak{v}$ on a 3-manifold $M$, we can always find a sphere $S\subset M$ that splits $(M,\mathfrak{v})$ into a trivial ball $B$ and a manifold $M\setminus B$ with a traversing field.  
\end{nta}

\subsection{Inverting the reconstruction map}\label{Invertingmap}

Denote by $\mathcal{S}(M,\mathcal{P})\subset\mathcal{S}$ the subset $\Phi^{-1}\!\left(\mathfrak{Comb}(M,\mathcal{P})\right)$. $\Phi$ restricts to a bijection $\mathcal{S}(M,\mathcal{P})\rightarrow\mathfrak{Comb}(M,\mathcal{P})$.
Composing $\Phi$ with the natural projection $\mathfrak{Comb}(M,\mathcal{P})\rightarrow\mathfrak{Eul}^s(M,\mathcal{P})$, we obtain a map $\Xi^s:\mathcal{S}(M,\mathcal{P})\rightarrow\mathfrak{Eul}^s(M,\mathcal{P})$.

We show in this section how to explicitly invert the reconstruction map via stream-spines. To do so, we will exhibit a map $\Xi^c:\mathcal{S}(M,\mathcal{P})\rightarrow\mathfrak{Eul}^c(M,\mathcal{P})$ such that $\Xi^s=\Psi\circ\Xi^c$.

\begin{equation}\label{spines and euler structures}
\begin{tikzcd}[row sep=small, column sep=normal]
& & \mathfrak{Eul}^c(M,\mathcal{P}) \arrow{dd}{\Psi} \\
\mathcal{S}(M,\mathcal{P})\arrow[twoheadrightarrow]{r}{\Phi}\arrow[bend left=15]{urr}{\Xi^c}\arrow[bend right=15]{drr}[swap]{\Xi^s}  & \mathfrak{Comb}(M,\mathcal{P}) \arrow[twoheadrightarrow, shorten >=-3pt, shorten <=-3pt]{dr} & \\
& & \mathfrak{Eul}^s(M,\mathcal{P})
\end{tikzcd}
\end{equation}

Take $P\in\mathcal{S}(M,\mathcal{P})$ and equip it with a cellularization. Recall from Proposition \ref{spines to manifolds} that $P$ induces a combing $\varphi(P)=[\tilde M,\tilde{\mathfrak{v}}]$ and a cellularization $\tilde{\mathcal{C}}$ on $\tilde M$.

Take a point $p_u$ inside each cell $u\in \tilde{\mathcal{C}} \setminus\tilde{\mathcal{C}}_{\partial}$ (where $\tilde{\mathcal{C}}_{\partial}$ is the induced cellularization on $\partial \tilde M$), and denote by $\beta_u$ the arc obtained by integrating $\tilde{\mathfrak{v}}$ in the positive direction, starting from $p_u$, until the boundary is reached. 
Consider the 1-chain:
$$
\tilde{\xi}(P)=\sum_{u\in \mathcal{C}} (-1)^{\dim u}\cdot\beta_u.
$$
Recall that $\Phi(P)=[M,\mathfrak{v}]$ is obtained from $[\tilde M,\tilde{\mathfrak{v}}]$ by gluing a trivial ball on a trivial sphere $S_{triv}$ in $\partial\tilde M$. Thus we have a projection $\pi:\tilde M\rightarrow M$, obtained by collapsing $S_{triv}$ to a point $x_0$, and a cellularization $\mathcal{C}=\pi(\tilde{\mathcal{C}})$ of $M$. It is easily seen that $\mathcal{C}$ is suited to the partition $\mathcal{P}$. Now consider the 1-chain $\xi(P)=\pi(\tilde{\xi}(P))$.

\begin{lmm}
$\xi(P)$ is a combinatorial Euler chain, and the class $[\xi(P)]\in\mathfrak{Eul}^c(M,\mathcal{P})$ does not depend on the cellularization chosen on $P$.
\end{lmm}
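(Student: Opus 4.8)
The plan is to verify the two assertions separately. First I would show that $\xi(P)$ is a combinatorial Euler chain, i.e.\ that $\partial\xi(P)=\sum_{e\subset E_{\mathcal{C}}}(-1)^{\dim e}x_e$ for appropriate interior points $x_e$. By construction $\partial\tilde\xi(P)=\sum_{u}(-1)^{\dim u}(q_u-p_u)$, where $q_u\in\partial\tilde M$ is the endpoint of the orbit $\beta_u$ and the sum runs over interior cells of $\tilde{\mathcal{C}}$. The key point is to identify which of the boundary endpoints $q_u$ survive (up to cancellation) after projecting by $\pi$ and collapsing $S_{\mathrm{triv}}$. The cells of $\tilde{\mathcal{C}}_\partial$ lying on the white part $W$ contribute endpoints that will cancel against the corresponding interior cells of $\tilde M$, because $\tilde{\mathfrak{v}}$ points inward there (orbits emanate from $W$), so collapsing those faces or, more precisely, a Euler-chain argument local to $W$ shows that the sum of $(-1)^{\dim}$ over cells of $M\setminus(W\cup V\cup Q^+)$ — which is exactly $E_{\mathcal{C}}$ — is what remains; meanwhile the cells on $B$, $V$, $C$, $Q^\pm$ get their orbit-arcs of zero length or are reached rather than started, so they do not contribute boundary points. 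The trivial ball collapsed to $x_0$ contributes nothing to $\partial\xi(P)$ since the orbits inside $B_{\mathrm{triv}}$ all terminate on $S_{\mathrm{triv}}$, which becomes the single point $x_0$; one must check $x_0$ itself appears with total coefficient zero, which follows because $S_{\mathrm{triv}}$ is a trivial sphere (one white disc, one black disc, one concave circle), whose relevant Euler characteristic count cancels.

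For the independence on the cellularization of $P$, I would argue exactly as Turaev does for his reconstruction map. Two cellularizations of $P$ have a common subdivision, so it suffices to treat the case where $\mathcal{C}'$ refines $\mathcal{C}$. Refining a cell $u$ of $\tilde{\mathcal{C}}$ into several subcells replaces the single arc $\beta_u$ by a family of arcs $\beta_{u'}$; choosing the representative points $p_{u'}$ close to $p_u$ and recalling that the integral curves of $\tilde{\mathfrak{v}}$ vary continuously, the difference
$$
\tilde\xi_{\mathcal{C}'}(P)-\tilde\xi_{\mathcal{C}}(P)+\sum(-1)^{\dim}\alpha
$$
(with $\alpha$ the small connecting paths from $p_u$ to the $p_{u'}$) is a boundary in $\tilde M$, hence after projection represents $0$ in $H_1(M)$. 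Concretely one checks that the alternating-sum contribution of the new subcells of a fixed $u$, together with the $\alpha$'s, bounds a small $2$-chain obtained by sweeping the subdivision of $u$ along the flow — this is the standard ``flow-box'' computation, and it is where the combinatorics of signs $(-1)^{\dim}$ in the subdivision of a cell is used (the alternating sum over the cells of a subdivided simplex of a point equals the original point, up to a boundary).

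The main obstacle I anticipate is the bookkeeping at the trivial sphere $S_{\mathrm{triv}}$ and along the tangency locus $G=V\cup C\cup Q$: one has to be careful that collapsing $S_{\mathrm{triv}}$ to $x_0$ does not introduce spurious boundary contributions and that the cells lying on $V$, $C$, $Q^+$, $Q^-$ are assigned orbit-arcs consistently with the definition of $E_{\mathcal{C}}$ (so that precisely the cells of $M\setminus(W\cup V\cup Q^+)$ appear). This amounts to carefully matching the partition $\mathcal{P}$ induced by $\Phi(P)$ with the stratification of $\partial\tilde M$ produced in Proposition~\ref{spines to manifolds} (regular lines $\to$ regular boundary, singular lines $\to$ concave, triple lines $\to$ convex, spikes $\to$ cuspidal points), and then invoking the same index-cancellation bookkeeping used in the proof of Proposition~\ref{non-emptyness}. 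Once the boundary of $\tilde\xi(P)$ is correctly identified, both claims reduce to the routine arguments above.
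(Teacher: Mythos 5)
Your treatment of the first assertion contains a genuine error: you assert that ``$x_0$ itself appears with total coefficient zero,'' but $x_0$ is an interior $0$-cell of $M$, hence lies in $E_{\mathcal{C}}$, and the Euler-chain condition requires it to appear in $\partial\xi(P)$ with coefficient $(-1)^0=1$, not $0$. This is precisely what the paper verifies: the coefficient of $x_0$ equals the alternating sum of cells in $B\cap S_{\mathrm{triv}}$, which is $\chi(B\cap S_{\mathrm{triv}})=\chi(\text{open disc})=1$; your suggestion that ``the relevant Euler characteristic count cancels'' over the trivial sphere is not the right computation. Relatedly, you seem to have the flow direction turned around in places: $\tilde{\mathfrak{v}}$ points \emph{inward} on $W$, so the arcs $\beta_u$ emanate from interior cells and terminate on $B\cup C\cup Q^-$, never on $W$. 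Thus the endpoints $q_u$ land exactly on the boundary cells of $E_{\mathcal{C}}$ and these cells \emph{do} contribute the needed boundary points, contrary to your claim that cells on $B,C,Q^{\pm}$ ``do not contribute boundary points''; and there are no endpoints on $W$ for anything to ``cancel'' against.

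For the second assertion your subdivision/flow-box argument is a genuinely different route from the paper's. The paper deduces independence in one line from Theorem~\ref{combinatorial encoding}: since $\Psi([\xi(P)])=\Xi^s(P)$, and $\Xi^s(P)$ does not involve the auxiliary cellularization of $P$, and $\Psi$ is a bijection, $[\xi(P)]$ is automatically independent of that cellularization (this is a forward reference but not circular, since the proof of Theorem~\ref{combinatorial encoding} only uses the first claim of the lemma). Your direct argument can be made to work, but it duplicates Turaev's subdivision machinery---you must pass through the canonical isomorphism $\mathfrak{Eul}^c(M,\mathcal{P})_{\mathcal{C}}\to\mathfrak{Eul}^c(M,\mathcal{P})_{\mathcal{C}'}$ and track the arcs and the $(-1)^{\dim}$ bookkeeping through it---whereas the paper's deduction is both shorter and better matched to the architecture of the section, in which the lemma and Theorem~\ref{combinatorial encoding} are proved as a unit.
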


\begin{proof}
We first prove that $\xi(P)$ is an Euler chain.
It is easily seen that $\partial\xi(P)$ contains, with the right sign, a point in each (open) cell of $\tilde M$, except for the cells of $W\cup V\cup Q^+$, as wished. It remains to prove that the resulting chain $\partial\xi(P)$ contains the singularity $x_0$ with coefficient 1. This coefficient is the sum of the coefficients of the cells in $B\cap S_{triv}$, and the conclusion follows from $\chi(B\cap S_{triv})=\chi(\mathrm{open\ disk})=1$. 

The fact that $[\xi(P)]$ does not depend on the cellularization of $P$ follows from the next theorem.
\end{proof}

\begin{figure}

\centering

\includegraphics[scale=0.7]{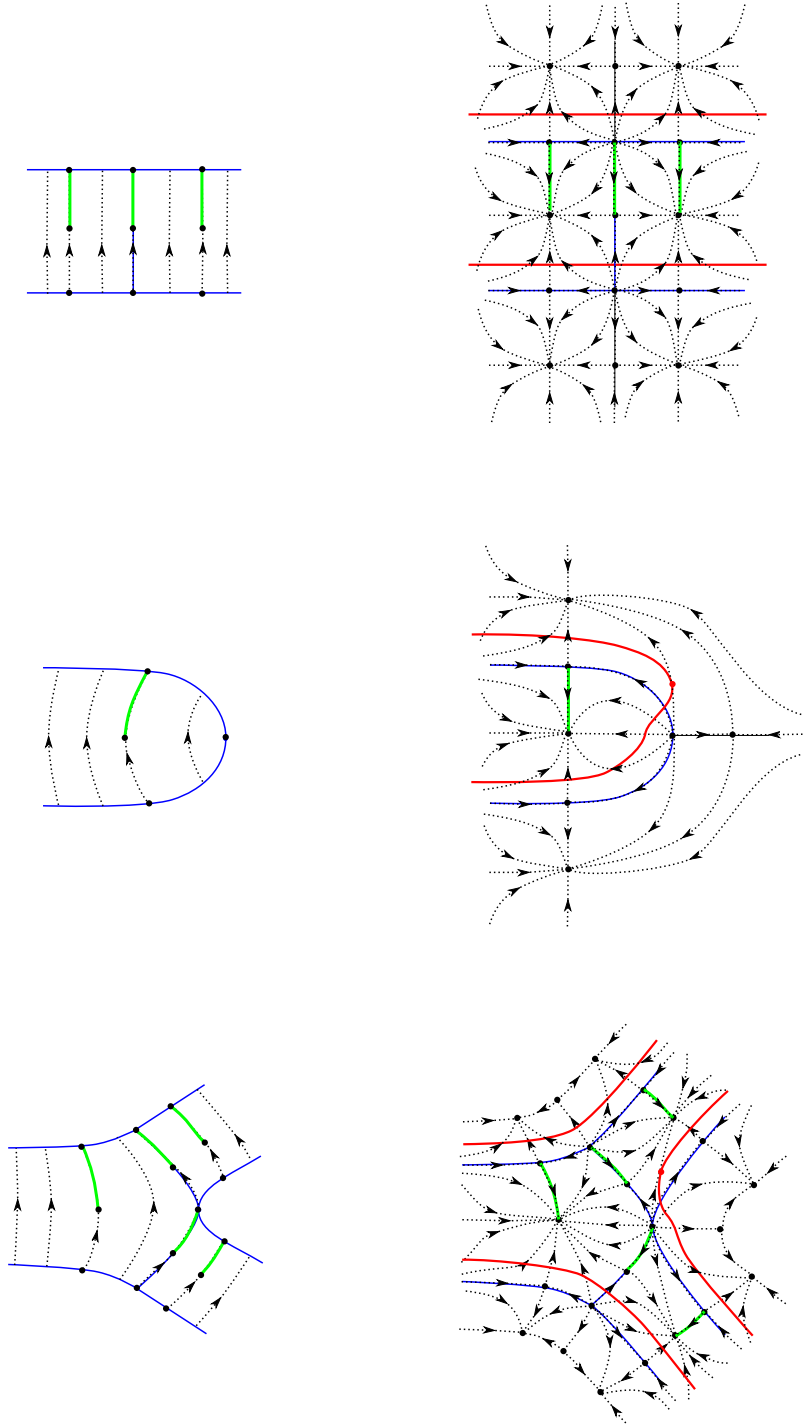}

\caption{Comparison between the field $\mathfrak{v}$ (left) and $\mathfrak{w}_{\tilde{\mathcal{C}}}$ (right) along regular, singular and triple lines. Notice that $\mathfrak{v}$ and $\mathfrak{w}_{\tilde{\mathcal{C}}}$ are antipodal only on $S$ (in green).}\label{desingularization}

\end{figure}

\begin{teo}\label{combinatorial encoding}
$\Psi([\xi(P)])=\Xi^s(P)$. Thus the map that completes diagram \eqref{spines and euler structures} is defined by $\Xi^c(P)=[\xi(P)]$.
\end{teo}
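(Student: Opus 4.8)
The plan is to compare the non-singular vector field $\mathfrak{v}$ representing $\Xi^s(P)$ with the fundamental field $\mathfrak{w}_{\tilde{\mathcal{C}}'}$ appearing in the construction of $\Psi$ (Theorem~\ref{reconstruction map}), taking advantage of the fact that both live on essentially the same cellularized manifold. Recall that $\Xi^s(P)$ is represented by $\Phi(P)=[M,\mathfrak{v}]$, obtained from $(\tilde M,\tilde{\mathfrak{v}})$ by collapsing the trivial sphere $S_{triv}$ to the point $x_0$, and $\tilde{\mathfrak{v}}$ is everywhere positively transverse to the spine $P$. On the other hand, $\Psi([\xi(P)])$ is built by taking the fundamental field $\mathfrak{w}_{\mathcal{C}'}$ of the (extended, barycentrically subdivided) cellularization, adjusting the boundary as in the proof of Theorem~\ref{reconstruction map}, and then cancelling its singularities along a representative of the Euler chain $\xi(P)$. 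So the two smooth Euler structures to be compared are: one coming from $\tilde{\mathfrak{v}}$ (desingularized trivially — it has no singularities), and one coming from $\mathfrak{w}_{\tilde{\mathcal{C}}}$ desingularized along $\xi(P)$.

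First I would make precise the relation between $\tilde{\mathfrak{v}}$ and $\mathfrak{w}_{\tilde{\mathcal{C}}}$: as \figurename~\ref{desingularization} indicates, these two fields can be taken to agree outside a neighbourhood of $\partial\tilde M$ and are \emph{antipodal} precisely over the trivial sphere $S$ (shown in green there). This is the analytic heart of the argument. Concretely, in each thickened cell $c\cong r\times[-1,1]$ the traversing field $\mathfrak{v}^c$ points straight up, whereas the fundamental field has sources and sinks at the barycentres; a controlled homotopy moves one to the other except where the collapse of $S_{triv}$ forces a reversal. Then I would invoke the obstruction-theoretic definition of the $H_1(M)$-action (the maps $\alpha^s$, $\alpha^c$ of Section~\ref{sctn1}): the difference class $\alpha^s(\Xi^s(P),\Psi([\xi(P)]))\in H_1(M)$ is computed by the index/degree of the homotopy obstruction over a cell decomposition, which — by the antipodality being concentrated exactly on $S$ together with the book-keeping $\chi(B\cap S_{triv})=1$ already used in the preceding lemma — is exactly the class represented by $\pi(\tilde\xi(P)) = \xi(P)$ relative to the fundamental-field singularity data. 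In other words, $\Psi$ applied to the combinatorial chain $\xi(P)$ "undoes" precisely the discrepancy between $\mathfrak{w}_{\tilde{\mathcal{C}}}$ and $\tilde{\mathfrak{v}}$.

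The cleanest way to organize this is Turaev's own comparison trick: choose the representative of $\xi(P)$ to be the union of the integral arcs $\beta_u$ of $\tilde{\mathfrak{v}}$ from the barycentres $p_u$ to $\partial\tilde M$, and observe that cancelling the singularities of $\mathfrak{w}_{\tilde{\mathcal{C}}}$ along exactly these arcs literally pushes all the zeros out along the flow of $\tilde{\mathfrak{v}}$, yielding a field homotopic to $\tilde{\mathfrak{v}}$ rel $\partial$; the only place the two constructions see different boundary behaviour is across $S_{triv}$, which is killed by collapsing it to $x_0$ and filling with a trivial ball. Hence $\Psi([\xi(P)])$ and $\Xi^s(P)$ are homologous combings. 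The boundary-pattern compatibility is automatic since both induce $\mathcal{P}$ (this was checked for $\mathcal{C}$ suited to $\mathcal{P}$ in the previous lemma, and for $\mathfrak{w}$ in the proof of Theorem~\ref{reconstruction map}). The formula $\Xi^c(P)=[\xi(P)]$ then follows since $\Psi$ is a bijection, and the independence of $[\xi(P)]$ on the cellularization of $P$ — left open in the previous lemma — is now a consequence, because $\Xi^s(P)$ patently does not depend on it and $\Psi$ is injective.

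The main obstacle I expect is making the comparison $\tilde{\mathfrak{v}}\leftrightarrow\mathfrak{w}_{\tilde{\mathcal{C}}}$ genuinely rigorous near the boundary stratification — fold lines $V\cup C$, cuspidal points $Q^\pm$, spikes and vertices of $P$ — since one must verify that the homotopy between the two fields can be performed through fields compatible with $\mathcal{P}$, i.e. without creating spurious tangencies. This is exactly the local bookkeeping of \figurename~\ref{desingularization}, \figurename~\ref{vertex}, \figurename~\ref{spike}, and it mirrors the case-by-case construction of the function $h$ in Theorem~\ref{reconstruction map}; the payoff is that, because $\tilde{\mathfrak{v}}$ is traversing and the fundamental field's orbits also run monotonically toward lower-dimensional cells, there is enough room to interpolate. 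Everything else (the index computation, the $\chi=1$ count, the appeal to $H_1(M)$-equivariance of $\Psi$) is routine.
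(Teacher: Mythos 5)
Your overall strategy is the one the paper uses — compare the traversing field $\tilde{\mathfrak{v}}$ with the fundamental field $\mathfrak{w}_{\tilde{\mathcal{C}}}$ cell-by-cell in the thickened spine, choose the arcs $\beta_u$ (integral curves of $\tilde{\mathfrak{v}}$ from the barycentres to $\partial\tilde M$) as the representative of $\xi(P)$, and argue that cancelling the singularities of $\mathfrak{w}_{\tilde{\mathcal{C}}}$ along precisely these arcs produces a field that is never opposite to $\mathfrak{v}$, hence homotopic to it. Your third paragraph states this cleanly, and your concern about the boundary stratification (the analogue of the $h$-construction without (Hp3)) is exactly the technical point the paper also addresses.

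However, your second paragraph contains two factual slips worth correcting. First, $\mathfrak{w}_{\tilde{\mathcal{C}}}$ and $\tilde{\mathfrak{v}}$ do \emph{not} agree outside a neighbourhood of $\partial\tilde M$: the fundamental field has singularities at the barycentre of every cell, interior ones included, so the discrepancy is spread over all of $\tilde M$. Second, the set $S$ in \figurename~\ref{desingularization} where the two fields are antipodal is \emph{not} the trivial sphere $S_{triv}$ — it is the support of $\xi(P)$, i.e.\ the union of the arcs $\beta_u$. You have conflated two different objects both labelled $S$. Finally, the detour through the difference class $\alpha^s(\Xi^s(P),\Psi([\xi(P)]))$ is unnecessary and, as written, does not clearly conclude that the class vanishes; the paper's argument is more direct: note that each component of the support of $\xi(P)$ is contractible (so the desingularization is well defined up to homology), prove that $\mathfrak{w}_{\tilde{\mathcal{C}}}$ is antipodal to $\mathfrak{v}$ only on that support, and exhibit a desingularization supported there that is nowhere antipodal to $\mathfrak{v}$; nowhere-antipodal fields are homotopic, which finishes the proof without any obstruction-class computation.
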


\begin{proof}
Let $\mathfrak{w}_{\mathcal{C}}$ be the fundamental field of the cellularization $\mathcal{C}$. Recall from Theorem \ref{reconstruction map} that the representative of $\Psi(\xi(P))$ is obtained by identifying $M$ with a collared copy $M_h$ of itself (the boundary of $M_h$ is shown in red in \figurename~\ref{desingularization}), then applying a desingularization procedure to $\mathfrak{w}_{\mathcal{C}}$ in a neighborhood of $\xi(P)$. 
It should be noted that our cellularization $\mathcal{C}$ does not satisfy (Hp3) (in fact, the star at each spike differs from the one pictured in \figurename~\ref{functiong}-left), thus the construction of $h$ in Theorem \ref{reconstruction map} does not apply directly. However, it is clear that a suitable function $h$ can be defined (recall Remark~\ref{fund field}): the behavior of $\partial M_h$ near regular, singular and triple line  is shown in red in \figurename~\ref{desingularization}-right; the construction of $h$ near spikes is a bit more complicated, but still analogous to the construction of $h$ near cuspidal points in the proof of Theorem \ref{reconstruction map}.

It is easily seen that every connected component of the support $S$ of $\xi(P)$ is contractible; therefore two different desingularizations of $\mathfrak{w}_{\mathcal{C}}$ represent the same Euler structure. Thus, it is enough to prove that $\mathfrak{v}$ is homologous to any desingularization of $\mathfrak{w}_{\mathcal{C}}$. In particular, it is enough to exhibit a desingularization that is everywhere antipodal to $\mathfrak{v}$.

We will do it in two steps:
\begin{itemize}
\item We prove that the set of points where $\mathfrak{w}_{\mathcal{C}}$ is antipodal to $\mathfrak{v}$ is contained in $S$;
\item We provide a desingularization of $\mathfrak{w}_{\mathcal{C}}$ in a neighborhood of $S$ to a field that is nowhere antipodal to $\mathfrak{v}$ in the neighborhood.
\end{itemize}

We will prove the two claims working with $\tilde M$ 
(proving the formula on $\tilde M$ easily implies the formula on $M$).
Notice that the cells of $\tilde{\mathcal{C}}$ are union of orbits of both $\mathfrak{w}_{\tilde{\mathcal{C}}}$ and $\mathfrak{v}$, hence we can analyze cells separately. Consider one of the cylinders $c$ of the cellularization $\tilde{\mathcal{C}}$. \figurename~\ref{desingularization} shows a cross-section of $c$ and of the vector fields $\mathfrak{w}_{\tilde{\mathcal{C}}}$ and $\mathfrak{v}$: we see that they are antipodal only in $S$ and it is easy to construct the wished desingularization.
\end{proof}

\begin{nta}
In \cite[\S~3]{petronio} is described how to explicitly invert the map $\Phi$. Therefore Theorem \ref{combinatorial encoding} is an effective way to invert the reconstruction map: in details, one starts from a representative $\mathfrak{v}$ of a smooth Euler structure $\mathfrak{e}$, constructs the spine $P=\Phi^{-1}(\mathfrak{v})$ and applies $\Xi^c$ to $P$.
\end{nta}

\subsection{Standard stream-spines}\label{SSS}

We consider for a moment a standard spine $P$, i.e., a spine whose local models are the first, second and fourth of \figurename~\ref{spine} only. This is the spine used in \cite[\S~3]{benedetti} to invert the reconstruction map for Euler structures relative to partitions without cuspidal points. It is easy to prove that one can transform each region of $P$ in a 2-cell using sliding moves; hence the stratification of singularities gives a cellularization of $P$. 

The same approach does not work with a stream-spine $P$, and we are left without a way to obtain a natural cellularization of $P$.
In this section we show how to solve this problem by enriching the structure of a stream-spine with two new local models and a new sliding move.

A \emph{standard stream-spine} $P$ is a connected 2-polyedron whose local models are the five in \figurename~\ref{spine}, plus the two in \figurename~\ref{supersingularity}; specifically, in addition to regular points, triple lines, singular lines, vertices, spikes, we allow:
\begin{enumerate}
\item some \emph{bending lines} (\figurename~\ref{supersingularity}-left), i.e., lines which are induced the same orientation by the two regions incident on it;
\item some \emph{bending spikes} (\figurename~\ref{supersingularity}-right), i.e., points where a singular, a triple and a bending line meet.
\end{enumerate}
Moreover, we require the components of the stratification of singularities to be open cells. 
Denote by $\mathfrak{S}_0$ the set of standard stream-spines. 

\begin{figure}

\centering

\includegraphics[scale=0.7]{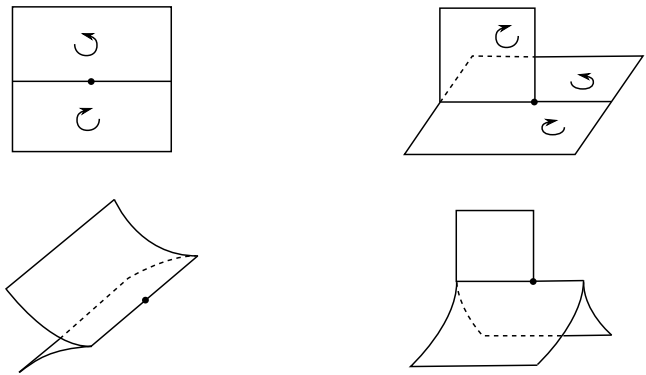}

\caption{New local models and their geometric interpretation.}\label{supersingularity}

\end{figure}

\begin{figure}

\centering

\includegraphics[scale=0.7]{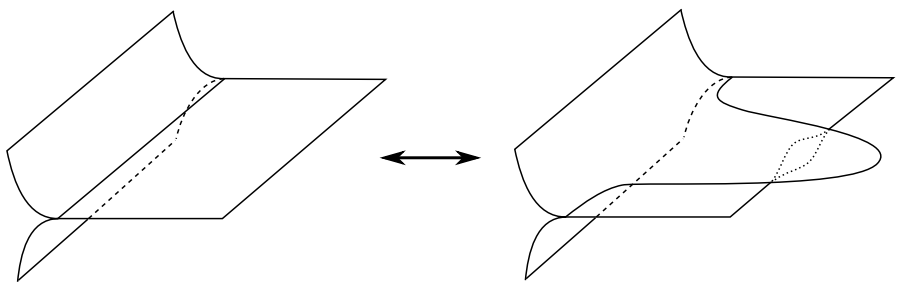}

\caption{The new sliding move consists in digging the triple line until the singular line is crossed.}\label{slidingmove}

\end{figure}

In addition to those described in \cite[\S~2.2]{petronio}, we define a new sliding move on $\mathfrak{S}_0$ as the one depicted in \figurename~\ref{slidingmove}. Obviously, each standard stream-spine can be transformed into a stream-spine by applying the reversal of our sliding move to each bending line. This gives a natural map $\psi:\mathfrak{S}_0\rightarrow\mathcal{S}_0$. 

Consider now the set $\mathfrak{S}$ of standard stream-spines whose image is a stream-spine in $\mathcal{S}$. 

\begin{lmm}
The restriction $\psi:\mathfrak{S}\rightarrow\mathcal{S}$ is surjective.
\end{lmm}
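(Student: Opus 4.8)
The plan is to reduce this surjectivity statement to the one already established in Theorem~\ref{spines-comb}, namely that $\Phi:\mathcal{S}\to\mathfrak{Comb}$ is surjective, using the fact that $\psi:\mathfrak{S}_0\to\mathcal{S}_0$ is built by undoing the new sliding move at each bending line. The key observation is that the new sliding move (Figure~\ref{slidingmove}) does not change the pair $(\tilde M,\tilde{\mathfrak v})$ it encodes: digging the triple line past the singular line is a local modification of the embedded polyhedron that leaves the thickening and the transversal vector field unchanged up to the isotopy/homotopy equivalences defining $\mathfrak{Comb}$. So $\psi$ is compatible with the maps to combings, in the sense that $\Phi\circ\psi$ agrees with the map $\mathfrak{S}_0\to\mathfrak{Comb}$ induced by the same thickening construction applied directly to standard stream-spines.

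First I would make precise that a standard stream-spine also admits a thickening construction exactly as in Proposition~\ref{spines to manifolds}: the two new local models (bending line, bending spike) thicken to pieces of $(\tilde M,\tilde{\mathfrak v})$ just as the five original ones do — indeed their geometric interpretation is indicated in Figure~\ref{supersingularity} — so there is a well-defined map $\mathfrak{S}_0\to\mathfrak{Comb}$, and by construction it factors as $\Phi\circ\psi$ on the subset $\mathfrak{S}$ (after gluing in the trivial ball on a trivial sphere, exactly as in the definition of $\Phi$). Since $\psi$ is literally the operation ``reverse the new sliding move at every bending line'', and the sliding move preserves the encoded combing, this factorization is immediate from the definitions.

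Given this, surjectivity of $\psi:\mathfrak{S}\to\mathcal{S}$ follows trivially: every element of $\mathcal{S}_0$ is itself a standard stream-spine (it has no bending lines or bending spikes, so it vacuously satisfies the new local-model conditions, and its strata are already open cells by the definition of $\mathcal{S}_0$), hence lies in the image of $\psi$ — in fact $\psi$ restricted to $\mathcal{S}_0\subset\mathfrak{S}_0$ is the identity. Intersecting with the condition ``image in $\mathcal{S}$'' shows $\mathcal{S}\subset\mathfrak{S}$ and $\psi|_{\mathcal{S}}=\mathrm{id}$, so $\psi:\mathfrak{S}\to\mathcal{S}$ is onto.

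The main obstacle is not the surjectivity per se — which is essentially formal — but verifying carefully that the new sliding move genuinely does not alter the encoded combing, i.e.\ that the thickenings of the two sides of Figure~\ref{slidingmove} are canonically identified respecting the transversal vector fields. This is the one step requiring an honest (though routine) local check, analogous to the verification in \cite[\S2.2]{petronio} that the classical sliding moves preserve the encoded combing; once that is in hand, the statement reduces to the observation that $\mathcal{S}_0$ sits inside $\mathfrak{S}_0$ as the standard stream-spines without bending strata.
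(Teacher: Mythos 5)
Your argument collapses at the step where you assert that $\mathcal{S}_0\subset\mathfrak{S}_0$ and hence $\psi|_{\mathcal{S}}=\mathrm{id}$. You justify this by claiming that for a stream-spine ``its strata are already open cells by the definition of $\mathcal{S}_0$'' --- but this is not part of the definition. A stream-spine is only required to satisfy local model conditions; globally, its regions are \emph{open surfaces} (closures compact, contained in $P$), not necessarily open disks, and its singular and triple lines may be closed circles rather than open intervals. The requirement that the components of the stratification be open cells is the \emph{additional} global condition singling out standard stream-spines, and it can genuinely fail for an arbitrary $P\in\mathcal{S}$. Indeed, the paper's motivation for this section is precisely that, unlike ordinary standard spines, stream-spines need not come with a natural cellularization, and the new local models and sliding move are introduced to fix this. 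Once you lose the inclusion $\mathcal{S}_0\subset\mathfrak{S}_0$, the claim that $\psi$ surjects because it restricts to the identity evaporates.

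The paper's actual proof confronts the non-cellularity head on: given $P\in\mathcal{S}$, one shows that sliding moves can divide each region of $P$ into $2$-cells. Regions with no closed singular line are handled by the classical sliding moves; if a region carries a closed singular line $s$, one uses the existence of the trivial sphere (which is what guarantees there is a triple line $t$ to work with) to slide $t$ over to $s$ and then applies the \emph{new} sliding move to break $s$ into an open singular line plus a bending line. The resulting polyhedron lies in $\mathfrak{S}$ and maps to $P$ under $\psi$. Your observation that the new sliding move preserves the encoded combing is fine but is not doing the work required here; the substantive content of the lemma is the cellularization step, which your proposal skips entirely by misreading the definition of $\mathcal{S}_0$.
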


\begin{proof}
It is enough to prove that each region of a stream-spine $P\in\mathcal{S}$ can be divided into a certain number of 2-cells by means of sliding moves. By definition, $P$ contains a trivial sphere $S$, i.e., a sphere formed by two disks glued together along a triple line $t$, such that (1) the two disks induce the same orientation on $t$, and (2) $P$ does not intersect the inner part of $S$.
Consider a region $r$ of $P$.  If $r$ contains no closed singular lines, the old sliding moves are enough to split $r$ into 2-cells. If $r$ contains a closed singular line $s$, we can slide $t$ over other triple lines until we reach $s$ (this can be done by means of the old sliding moves), then use our new sliding move to split $s$ into a singular and a bending line.
\end{proof}


Now we can repeat the arguments of Section \ref{Invertingmap} working with a spine $P\in\mathfrak{S}$ and the surjection $\Phi\circ\psi:\mathfrak{S}\rightarrow\mathfrak{Comb}$. The advantage is that now $P$ is already endowed with a natural cellularization and we do not need to choose one. 
 
It is easy to see how the thickening in the proof of Proposition~\ref{spines to manifolds} works near the new local models. On standard stream-spines we can even describe a different cellularization of $\tilde M$, more in the spirit of \cite{benedetti}, by associating a simplex to each singularity: 
\begin{itemize}
\item to each vertex we associate a truncated tetrahedron (\figurename~\ref{newthick}-left), i.e., a simplex whose faces are four hexagons and four triangles;
\item to each spike and to each bending spike we associate a tetrahedron with a different truncation (\figurename~\ref{newthick}-right): his faces are one hexagon, four quadrilaterals and two triangles.
\end{itemize}
The simplices are then glued together as dictated by the spine (the ideas are the same as \cite[Thm.~1.1.26]{matveev}).
The results of Section \ref{Invertingmap} can be recovered, without significant modifications, working with either the old or the new cellularization.

\begin{figure}

\centering

\includegraphics[scale=0.9]{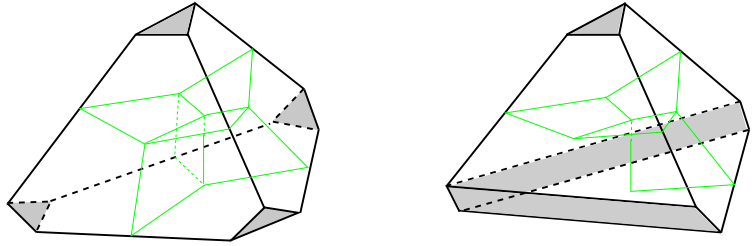}

\caption{Thickening of the singularities. The green part represents the immersion of the spine inside the manifold. The grey faces will form the boundary of the manifold; the white faces are glued with the white faces of other simplices.}\label{newthick}

\end{figure}


\begin{thebibliography}{9}

\bibitem{atiyah}
M. Atiyah, 
\emph{Topological Quantum Field Theories }. 
Publ. Math. IHES 68 (1989) 175-186.

\bibitem{benedetti}
R. Benedetti, C. Petronio,
\emph{Reidemeister torsion of 3-dimensional
Euler structures with simple boundary tangency
and pseudo-Legendrian knots
 }. 
Manuscripta math. 106, 13-61,
(2001).

\bibitem{halperin}
S. Halperin, D. Toledo, 
\emph{Stiefel-Whitney homology classes }. 
Ann. of Math. (2) 96
(1972), 511-525.

\bibitem{lescop}
C. Lescop,
\emph{Global surgery formula for the Casson-Walker invariant }.
Ann. of Math., Studies 140, Princeton University Press (1996).

\bibitem{matveev}
S. Matveev, 
\emph{Algorithmic topology and classification of 3-manifolds }. 
Algorithms and Computation in Mathematics, Vol.9, Springer-Verlag, Berlin (2003).

\bibitem{milnor}
J. W. Milnor,
\emph{A duality theorem for Reidemeister torsion }.
Ann. of Math., Vol.76, No.1
(1962).

\bibitem{milnor1}
J. W. Milnor,
\emph{Whitehead torsion }.
Bull. Amer. Math. Soc., Vol.72, No.3
(1966).

\bibitem{morin}
B. Morin, 
\emph{Formes canoniques des singularit\'e d'une applicatione diff\'erentiable }. 
C. R. Acad. Sci. Paris 260 (1965), 6503-6506.

\bibitem{petronio}
C. Petronio,
\emph{Generic flows on 3-manifolds }.
arXiv:1211.6445 (2013).

\bibitem{reidemeister}
K. Reidemeister,
\emph{Homotopieringe und Linsenraume }.
Abh. Math. Sem. Univ. Hamburg, Vol.11
(1935).

\bibitem{szabo}
P. Ozsv\'ath, Z. Szab\'o,
\emph{An introduction to Heegaard Floer homology }. 
Clay Math. Proc. Vol.5 (2006), 3-28.

\bibitem{turaev}
V. G. Turaev,
\emph{Euler structures, nonsingular vector fields,
and torsions of Reidemeister type }. 
Math. USSR Izvestiya, Vol.34, No.3 (1990).

\bibitem{turaev2}
V. G. Turaev,
\emph{Introduction to combinatorial torsion }. 
Notes taken by Felix Schlenk. Lectures in Mathematics ETH Zurich. Birkh\"auser Verlag, Basel (2001).

\bibitem{turaev1}
V. G. Turaev,
\emph{Torsions of 3-dimensional manifolds }.
Progress in Mathematics, Vol.208, Birkh\"auser Verlag, Basel (2002).

\bibitem{whitney}
H. Whitney,
\emph{On singularities of mappings of Euclidean spaces. I. Mapping of the plane into the plane }. 
Ann. of Math. Vol.62, No.3 (1955).


\end{thebibliography}
\end{document}